\documentclass[10pt]{article}
\usepackage{amsfonts}
\usepackage{amsmath}
\usepackage{mathrsfs}
\usepackage{color}
\usepackage{tikz}
\usepackage{mathrsfs,amscd,amssymb,amsthm,amsmath,bm,graphicx,psfrag,subfigure,url}
\usepackage[titletoc]{appendix}

\textheight23cm \textwidth16cm \hoffset-2cm \voffset-1.3cm
\parskip 2pt plus1pt minus1pt

\usepackage{indentfirst}

\def \[{\begin{equation}}
\def \]{\end{equation}}

\newtheorem{thm}{Theorem}[section]

\newtheorem{defi}{Definition}
\newtheorem{claim}{Claim}
\newtheorem{Case}{Case}
\newtheorem{subc}{Subcase}
\newtheorem{fact}{Fact}
\newtheorem{lem}[thm]{Lemma}
\newtheorem{cor}[thm]{Corollary}
\newtheorem{ex}[thm]{Example}

\newtheorem{Obs}{Observation}

\newenvironment{wst}
{\setlength{\leftmargini}{1.5\parindent}
 \begin{itemize}
 \setlength{\itemsep}{-1.1mm}}
{\end{itemize}}

\begin{document}
\begin{center}{\Large \bf Mixed graphs with cut vertices having exactly two positive eigenvalues}
\vspace{4mm}

{\large Xiaocong He, \ \ Lihua Feng\footnote{Corresponding author.  \\ \hspace*{5mm}{\it Email addresses}: hexc2018@qq.com (X.C. He),\ \ fenglh@163.com (L.H. Feng)}}
\vspace{3mm}

School of Mathematics and Statistics, HNP-LAMA, Central South University, Changsha, Hunan 410083, P. R. China

\end{center}


\noindent {\bf Abstract}: \
A mixed graph is obtained by orienting some edges of a simple graph. The positive inertia index of a mixed graph is defined as the number of positive eigenvalues of its Hermitian adjacency matrix, including multiplicities. This matrix was introduced by Liu and Li, independently by Guo and Mohar, in the study of graph energy. Recently, Yuan et al. characterized the mixed graphs with exactly one positive eigenvalue. In this paper, we study the positive inertia indices of mixed graphs and  characterize the mixed graphs with cut vertices having positive inertia index 2.

\vspace{2mm} \noindent{\bf Keywords}: Mixed graph; Cut vertices; Hermitian adjacency matrix; Positive inertia index
\vspace{2mm}

\setcounter{section}{0}
\section{\normalsize Introduction}\setcounter{equation}{0}
We will start with introducing some background that will lead to our main results. Some significant previously established facts will also be presented.
\subsection{\normalsize Background}
Let $G$ be a simple graph with vertex set $V(G)=\{v_1,v_2,\ldots,v_n\}$ and edge set $E(G)$. A \textit{mixed graph} $\widetilde{G}$ is obtained by orienting some edges of $G$. Accordingly, $G$ is referred to as the underlying graph of $\widetilde{G}$. Consequently, a mixed graph $\widetilde{G}$ is an ordered triple $(V(G), E_0, E_1)$, where $E_0$ is the undirected edge set and $E_1$ is the directed edge (arc) set. It is obvious that $\widetilde{G}$ is undirected if $E_0=E(G)$ while $\widetilde{G}$ is an oriented graph if $E_0=\emptyset$. Thus, mixed graphs are the generalizations of simple graphs and oriented graphs.

The $Hermitian$ $adjacency$ $matrix$ $H(\widetilde{G})=(h_{st})_{n\times n}$ of $\widetilde{G}$ was proposed, independently, by Liu and Li \cite{lxl}, and Guo and Mohar \cite{guo}. It is defined as
\begin{center}
$h_{st}=\left\{
  \begin{array}{ll}
    1, & \hbox{if $v_sv_t$ is an undirected edge;} \\ [1mm]
    i, & \hbox{if $\overrightarrow{v_sv_t}$ is an arc;} \\ [1mm]
    -i, & \hbox{if $\overrightarrow{v_tv_s}$ is an arc;} \\ [1mm]
    0, & \hbox{otherwise,}
  \end{array}
\right.$
\end{center}
where $i=\sqrt{-1}$. Note that $H(\widetilde{G})$ is Hermitian, that is, $H(\widetilde{G})^{*}=H(\widetilde{G})$, where $H(\widetilde{G})^{*}$ denotes the
conjugate transpose of $H(\widetilde{G})$. Thus the eigenvalues  of $H(\widetilde{G})$ are real,  called the $H$-eigenvalues  of $\widetilde{G}$. The $H$-rank of $\widetilde{G}$, denoted by $rk(\widetilde{G})$, is the rank of $H(\widetilde{G})$, and the number of positive, negative and zero eigenvalues of $H(\widetilde{G})$ are defined as $positive$ $inertia$ $index$, $negative$ $inertia$ $index$ and $nullity$ of $\widetilde{G}$, denoted by $p(\widetilde{G})$, $n(\widetilde{G})$ and $\eta(\widetilde{G})$, respectively. Obviously, $p(\widetilde{G})+n(\widetilde{G})=rk(\widetilde{G})$. The \textit{inertia index} of $\widetilde{G}$ is the triple ${\rm In}(\widetilde{G})=(p(\widetilde{G}), n(\widetilde{G}), \eta(\widetilde{G}))$. Clearly, the Hermitian adjacency matrix of a mixed graph is a natural generalization of the adjacency matrix of a simple graph. Therefore the mixed graph attracts more and more researchers' attention; see \cite{guo,hu,lscc,ly,lxl,mo,mo20,wiss,WW,34,radii,yuan}.

In \cite{guo,mo}, the authors considered the so-called \textit{four-way} switching operation keeping the $H$-spectrum of a mixed graph unchanged.  We describe the definition in the matrix-theoretic language below.
\begin{defi}\cite{mo}
Let $\widetilde{G_1}$ and $\widetilde{G_2}$ be two mixed graphs. A four-way switching (FWS for short) is the operation of changing
$\widetilde{G_1}$ into $\widetilde{G_2}$, if there exists a diagonal matrix $Q$ with $Q_{vv}\in\{\pm1, \pm i\}$ such that $H(\widetilde{G_2})=Q^{-1}H(\widetilde{G_1})Q$.
\end{defi}

In fact, it is often difficult to check whether a mixed graph $\widetilde{G_2}$ can be obtained from a mixed graph $\widetilde{G_1}$ by a four-way switching. Because of this, Mohar \cite{mo} gave some special cases of four-way switching called \textit{two-way switching}, which can be described
as a special similarity transformation. There are really two different cases as follows.

\begin{wst}
\item[{\rm (1)}] \textit{Directed two-way switching}. Reversing all edges in an edge cut consisting of
      directed edges only.
\item[{\rm (2)}] \textit{Mixed two-way switching}. If there is an edge cut which contains undirected
      edges (possibly empty) and directed edges (possibly empty) in one direction
      only, replacing the each directed edge in the cut by a single undirected edge,
      and replacing each former undirected edge by a single directed edge in the
      direction opposite to the direction of former directed edges.
\end{wst}

The converse ${\widetilde{G}}^{\top}$ of a mixed graph $\widetilde{G}$ is obtained from $\widetilde{G}$ by reversing all directed edges in $\widetilde{G}$. They are cospectral since $H(\widetilde{G}^{\top})=H(\widetilde{G})^{\top}$. Two mixed graphs $\widetilde{G_1}$ and $\widetilde{G_2}$ are said to be switching equivalent if $\widetilde{G_1}$ can be changed into $\widetilde{G_2}$ by a FWS and taking a possible converse.

The circle group, which is denoted by $\mathbb{T}=\{z\in \mathbb{C} : |z|=1\}$, is a subgroup of the multiplicative group of all nonzero complex numbers $\mathbb{C}^{\times}$. If $v_sv_t\in E(G)$, we denote by $e_{v_sv_t}$ the oriented edge from $v_s$ to $v_t$. Let $\overrightarrow{E}$ be the set of oriented edges of $G$, i.e., $\overrightarrow{E}=\{e_{v_sv_t}, e_{v_tv_s}: v_sv_t\in E(G)\}$. A \textit{complex unit gain graph} $\Phi=(G,\mathbb{T},\varphi)$ is a graph with a gain function $\varphi :\overrightarrow{E} \rightarrow \mathbb{T}$, such that $\varphi(e_{v_sv_t})=\varphi(e_{v_tv_s})^{-1}=\overline{\varphi(e_{v_tv_s})}$ for any $v_sv_t\in E(G)$. Clearly, the complex unit gain graph is a natural generalization of a simple graph. So, the complex unit gain graph attracts much plenty of researchers' attention; see \cite{hsj,lscc,15,16,luy,lu,27,xu,38,hxc}.

We usually write $G^{\varphi}$ for a complex unit gain graph $\Phi=(G,\mathbb{T},\varphi)$. The adjacency matrix of $G^{\varphi}$ is the   Hermitian matrix  $A(G^{\varphi})=(a_{st}^\varphi)_{n\times n}$, where
\begin{center}
$a_{st}^\varphi=\left\{
  \begin{array}{ll}
    \varphi(e_{v_sv_t}), & \hbox{if $v_sv_t\in E(G)$;} \\
    0, & \hbox{otherwise.}
  \end{array}
\right.$
\end{center}
The $rank$ of $G^{\varphi}$, denoted by $rk(G^{\varphi})$, is the rank of $A(G^{\varphi})$. The $positive$ $inertia$ $index$ $p(G^{\varphi})$, the $negative$ $inertia$ $index$ $n(G^{\varphi})$ and the $nullity$ $\eta(G^{\varphi})$ of $G^{\varphi}$ are defined to be the number of positive eigenvalues, negative eigenvalues and zero eigenvalues of $A(G^{\varphi})$, respectively. Obviously, $p(G^{\varphi})+n(G^{\varphi})=rk(G^{\varphi})$.

If $\varphi(\overrightarrow{E})\subseteq\{1, \pm i\}\subseteq\mathbb{T}$, then for any $v_sv_t\in E(G)$, we define
\begin{center}
$h_{st}=\left\{
  \begin{array}{ll}
    \varphi(e_{v_sv_t}), & \hbox{if $v_sv_t\in E(G)$;} \\
    0, & \hbox{otherwise.}
  \end{array}
\right.$
\end{center}
Then $(h_{st})_{n\times n}$ is the Hermitian adjacency matrix of a mixed graph $\widetilde{G}$ and we have $H(\widetilde{G})=(h_{st})_{n\times n}=A(G^{\varphi})$. Hence, the complex unit gain graphs are the generalizations of mixed graphs. We can view a mixed graph as a special case of a complex unit gain graph.

Two complex unit gain graphs $G^{\varphi}$ and $G^{\varphi'}$ are {\it switching equivalent} if there is a mapping $\theta: V(G) \rightarrow \mathbb{T}$ such
that $\varphi'(uv)=\theta(u)^{-1}\varphi(uv)\theta(v)$. In this case, the mapping $\varphi'$ can be written as $\varphi\theta$. It leads to that
$$A(G^{\varphi'})=diag(\theta(v_1), \theta(v_2),\ldots, \theta(v_n))^{-1}A(G^{\varphi})diag(\theta(v_1), \theta(v_2),\ldots, \theta(v_n)).$$
Therefore, $G^{\varphi}$ and $G^{\varphi'}$ share the same spectrum. It is clear that the switching equivalence is an equivalence relation.

We write $C_n$, $S_n$, $K_n$ for the cycle, star and complete graph of order $n$, respectively. A mixed graph is called a \textit{mixed cycle, mixed star, mixed complete graph}, etc., if its underlying graph is a cycle, star, complete graph, etc., respectively. A \textit{cut vertex} in $G$ is a vertex whose removal increases the number of connected components of $G$. For $v\in V$, $v$ is called a $pendant$ $vertex$ if its degree $d_G(v)=1$. Let $K_{n_1, n_2,\ldots, n_k}$ denote the complete $k$-partite graph with $V_1, V_2,\ldots, V_k$ being the partition class such that $|V_j|=n_j$ for $1\leq j\leq k$. We follow \cite{1} for notations and terminologies not defined here.

A \textit{subgraph} of $\widetilde{G}$ (resp. $G^{\varphi}$) is a subgraph of $G$ in which each edge preserves the original direction (resp. gain fuction) in $\widetilde{G}$ (resp. $G^{\varphi}$). Let $v\in V(G)$, we write $\widetilde{G}-v$ (resp. $G^{\varphi}-v$) for the \textit{induced subgraph} obtained from $\widetilde{G}$ (resp. $G^{\varphi}$) by deleting the vertex $v$ and all edges incident with $v$. For an induced subgraph $\widetilde{R}$ (resp. $R^{\varphi}$) of $\widetilde{G}$ (resp. $G^{\varphi}$), denote by $\widetilde{G}-\widetilde{R}$ (resp. $G^{\varphi}-R^{\varphi}$), the subgraph obtained from $\widetilde{G}$ (resp. $G^{\varphi}$) by deleting all vertices of $\widetilde{R}$ (resp. $R^{\varphi}$) and all incident edges. For a vertex $v$ of $V(G)\backslash V(R)$, we write $\widetilde{R}+v$ (resp. $R^{\varphi}+v$) to denote the induced subgraph of $\widetilde{G}$ (resp. $G^{\varphi}$) with vertex set $V(R)\bigcup\{v\}$. A vertex of $\widetilde{G}$ (resp. $G^{\varphi}$) is called a \textit{pendant vertex} if it is of degree one in $G$. A vertex of $\widetilde{G}$ (resp. $G^{\varphi}$) is called a \textit{cut vertex} if it is a cut vertex of $G$.

The spectral-based graph invariants are widely investigated in the literature, such as inertia index \cite{10,g1,g2,14,23,26,o,wiss,smith,33,36,37,hxc} of simple graphs, signed graphs, mixed graphs and complex unit gain graphs, nullity \cite{3,5,7,41,42,11,22,25,28,36,37,39} of simple graphs and signed graphs, rank \cite{2,9,29,30,32,35} of simple graphs, signed graphs and complex unit gain graphs, the $H$-rank \cite{6,8,34} of mixed graphs and skew-rank \cite{12,13,17,18,19,20,24,31,40} of oriented graphs. The spectral parameter ``inertia index'' can determine the structure of a graph to some extent and
has attracted much attention recently. Smith \cite{smith} characterized all (undirected) graphs with positive inertia index 1. Fan et al. \cite{10} determined sharp bounds for the positive and negative inertia index of a graph. Oboudi \cite{o} completely characterized the (undirected) graphs with exactly two non-negative eigenvalues. Yu et al. \cite{36} determined the signed graphs with positive inertia index 1 and the signed graphs containing pendant vertices with positive inertia index 2. Wang et al. \cite{33} extended the above work to the signed graphs containing cut vertices with positive inertia index 2. Gregory et al. \cite{g2} investigated the subadditivity of the inertia indices and gave some properties of Hermitian rank that can determine the biclique decomposition number. Gregory et al. \cite{g1} studied the inertia indices of a partial join of two graphs and provided certain connections between biclique decompositions of partial joins of graphs and the inertia indices. Geng et al. \cite{geng} studied characterizations of graphs with given inertia index achieving the maximum diameter.

Recently, much attention has been paid to the inertia index of mixed graphs. Wissing and van Dam \cite{wiss} characterized all mixed graphs with negative inertia index 1. He et al. \cite{hao} studied the negative and positive inertia indices of mixed unicyclic graphs and gave the upper and lower bounds of the negative and positive inertia indices for mixed graphs. Zheng et al. \cite{zheng} gave some eigenvalue inequalities between two mixed graphs by means of inertia indices. Wei et al. \cite{WW} investigated relations between the inertia indices of a mixed graph and those of its underlying graph. Yuan et al. \cite{yuan} provided a characterization of mixed graphs with positive inertia index 1 and studied the determination  of some mixed graphs  by their $H$-spectra.

{Motivated by the results in \cite{guo,lxl,mo,WW,wiss,33,34,36,37,yuan}, in this paper, we  consider the mixed graphs with connected underlying graph and characterize all mixed graphs with cut vertices which have positive inertia index 2. For more general setting, we leave it for further research.

\subsection{\normalsize Main results}
Before announcing the main theorem, we introduce one class mixed graph. Let $\overrightarrow{C_3}(t_1, t_2, t_3)$ be the mixed complete tripartite graph shown in Figure. 1, where $|A|=t_1, |B|=t_2, |C|=t_3$. An \textit{odd triangle} is a mixed triangle containing odd number of directed edges. In particular, $\overrightarrow{C_3}(1, 1, 1)$ is exactly an odd triangle. Similarly, a mixed triangle containing even number of directed edges is called an \textit{even triangle}.
\begin{figure}[h!]
\begin{center}
\psfrag{1}{$A$}\psfrag{2}{$B$}\psfrag{3}{$C$}\psfrag{4}{Figure 1: $\overrightarrow{C_3}(t_1, t_2, t_3)$}
\includegraphics[width=40mm]{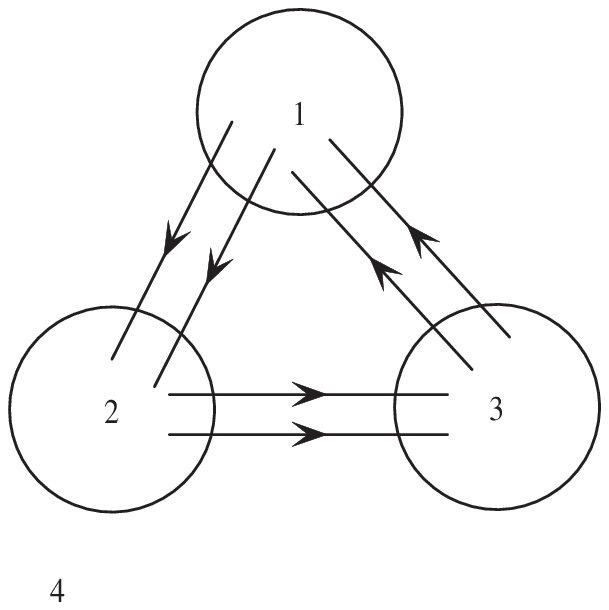} \\
\end{center}
\end{figure}

We now list our main results in the following.
\begin{thm}\label{thm1.1}
Let $\widetilde{G}$ be a connected mixed graph with pendant vertices. Then $p(\widetilde{G})=2$ if and only if $\widetilde{G}$ is obtained by adding some undirected edges (possibly empty) and directed edges (possibly empty) with arbitrary direction between the center of a mixed star and some vertices of a mixed graph $\widetilde{F}$, where $\widetilde{F}$ is switching equivalent   to a complete multipartite graph or to some $\overrightarrow{C_3}(t_1, t_2, t_3)$.
\end{thm}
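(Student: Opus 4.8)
The plan is to reduce the statement to the characterization of mixed graphs with positive inertia index $1$ proved by Yuan et al. \cite{yuan}, namely that a connected mixed graph $\widetilde{H}$ satisfies $p(\widetilde{H})=1$ if and only if $\widetilde{H}$ is switching equivalent to a complete multipartite graph or to some $\overrightarrow{C_3}(t_1,t_2,t_3)$. The bridge from $p=2$ to $p=1$ is the behaviour of the inertia indices under deletion of a pendant vertex and its neighbour, which I would record as a preliminary lemma: if $v$ is a pendant vertex of $\widetilde{G}$ with unique neighbour $u$, then
$${\rm In}(\widetilde{G})=(1,1,0)+{\rm In}(\widetilde{G}-u-v).$$
To see this, write $H(\widetilde{G})$ in block form with the principal block $M=\begin{pmatrix} 0 & \overline{w} \\ w & 0\end{pmatrix}$ on $\{v,u\}$, where $w$ is the unit-modulus gain of $uv$; then $M$ is invertible with ${\rm In}(M)=(1,1,0)$, and Haynsworth additivity gives ${\rm In}(\widetilde{G})={\rm In}(M)+{\rm In}\big(H(\widetilde{G}-u-v)-N^{*}M^{-1}N\big)$. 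Since $v$ has no neighbour other than $u$, the block $N$ has a zero row at $v$, so $N^{*}M^{-1}N=0$ and the Schur complement equals $H(\widetilde{G}-u-v)$. Hence $p(\widetilde{G})=p(\widetilde{G}-u-v)+1$, regardless of how $u$ meets the remaining vertices.

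For the sufficiency, suppose $\widetilde{G}$ is built as in the statement: a mixed star with centre $c$ and leaves $v_1,\dots,v_k$, a mixed graph $\widetilde{F}$, and arbitrary directed/undirected edges between $c$ and $V(\widetilde{F})$. Apply the reduction to the pendant $v_1$ and its neighbour $c$. Deleting $c$ detaches every leaf and all of $\widetilde{F}$, so $\widetilde{G}-c-v_1$ is the disjoint union of the now-isolated vertices $v_2,\dots,v_k$ and the graph $\widetilde{F}$. By the theorem of Yuan et al. \cite{yuan}, $p(\widetilde{F})=1$, and therefore $p(\widetilde{G})=p(\widetilde{F})+1=2$.

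For the necessity, pick any pendant vertex $v$ with neighbour $u$; the reduction yields $p(\widetilde{G}-u-v)=1$. Positive inertia index is additive over connected components, and any component with at least one edge has $p\ge 1$ (a nonzero Hermitian matrix with zero trace must have a positive eigenvalue); hence a component with $p=0$ is a single isolated vertex. Consequently $\widetilde{G}-u-v$ consists of exactly one nontrivial component $\widetilde{F}'$ with $p(\widetilde{F}')=1$ together with some isolated vertices. Now connectivity of $\widetilde{G}$ forces the structure: every isolated vertex of $\widetilde{G}-u-v$ has all of its $\widetilde{G}$-neighbours in $\{u,v\}$, and as $v$ is pendant this can only be $u$, so these vertices are pendant leaves at $u$; similarly $\widetilde{F}'$ is joined to $u$. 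Thus $u$ is the centre of a mixed star whose leaves are $v$ and those isolated vertices, and $\widetilde{G}$ is obtained by joining $c:=u$ to some vertices of $\widetilde{F}:=\widetilde{F}'$. Finally, since $\widetilde{F}'$ is connected with $p(\widetilde{F}')=1$, Yuan et al. \cite{yuan} give that $\widetilde{F}'$ is switching equivalent to a complete multipartite graph or to some $\overrightarrow{C_3}(t_1,t_2,t_3)$, as required.

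The main obstacle I anticipate is the combinatorial bookkeeping in the necessity: one must show that deleting $u$ and $v$ does more than drop $p$ by one -- it must leave precisely one block of positive inertia index $1$, with all remaining vertices isolated, and these must then reassemble with $u$ into a single mixed star rather than attaching along several vertices. The additivity of $p$ over components handles the first point, while the connectivity argument that both the nontrivial block and every leftover isolated vertex hang off the same vertex $u$ is the delicate step; one also has to check that the conclusion is insensitive to the gains on the edges from $c$ into $\widetilde{F}$, which is exactly what the vanishing of $N^{*}M^{-1}N$ guarantees.
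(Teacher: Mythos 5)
Your proof is correct and takes essentially the same route as the paper: delete a pendant vertex together with its neighbour to reduce $p(\widetilde{G})=2$ to $p(\widetilde{G}-u-v)=1$ (the paper simply cites this inertia reduction as a known lemma rather than re-deriving it via the Schur complement, as you do), and then invoke the characterization of Yuan et al. of mixed graphs with exactly one positive eigenvalue. Your explicit component/connectivity argument for reassembling the star centred at $u$ with the block $\widetilde{F}$ merely fills in what the paper dismisses as ``easy to see.''
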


A most interesting problem attracting us is to determine all mixed graphs without pendant vertices whose positive inertia indices are 2. It seems difficult for us to give a complete solution now. Note that a mixed graph with a pendant vertex must have a cut vertex. In the following, we extend the result of Theorem \ref{thm1.1} to   consider the mixed graphs with cut vertices which have positive inertia index 2. In order to accomplish our goal,  we should have the aid of the following graph transformations and terminologies.

Let $\widetilde{G_1}$ and $\widetilde{G_2}$ be two mixed graphs. Let $\widetilde{G_1}\bullet v\bullet\widetilde{G_2}$ be the mixed graph obtained by identifying one vertex $v'$ of $\widetilde{G_1}$ with one vertex $v''$ of $\widetilde{G_2}$ (we denote the new vertex by $v$).

Let $K(q_1,\ldots, q_r; n_1, \ldots, n_k; p)$ be a simple graph obtained from $K_{q_1,\ldots, q_r}$ and $K_{n_1, \ldots, n_k}$ by adding a new vertex $v$ and adding edges such that $v$ is adjacent to all vertices of $K_{q_1,\ldots, q_r}$ and adjacent to all vertices of $\bigcup_{j=1}^{p} V_j$, where $1\leq p\leq k$ and $V_1, V_2,\ldots, V_k$ are the partition classes of $V(K_{n_1, n_2,\ldots, n_k})$ such that $|V_j|=n_j$ for $1\leq j\leq k$ (see Figure. 2). Similarly, let $K(0; n_1, \ldots, n_k; p)$ be a simple graph obtained from $K_{n_1, \ldots, n_k}$ by adding a new vertex $v$ and adding edges such that $v$ is adjacent to all vertices of $\bigcup_{j=1}^{p} V_j$, where $1\leq p\leq k$. If $K_{q_1,\ldots, q_r}$ and $K_{n_1, \ldots, n_k}$ are complete graphs $K_r$ and $K_k$, respectively, then we denote $K(q_1,\ldots, q_r; n_1, \ldots, n_k; p)$ by $K(r; k; p)$. Similarly, if $K_{n_1, \ldots, n_k}$ is a complete graph $K_k$, then we denote $K(0; n_1, \ldots, n_k; p)$ by $K(0; k; p)$.
\begin{figure}[h!]
\begin{center}
\psfrag{v}{$v$}\psfrag{1}{Figure 2: $K(3, 2; 2, 2, 3; 2)$}
\includegraphics[width=70mm]{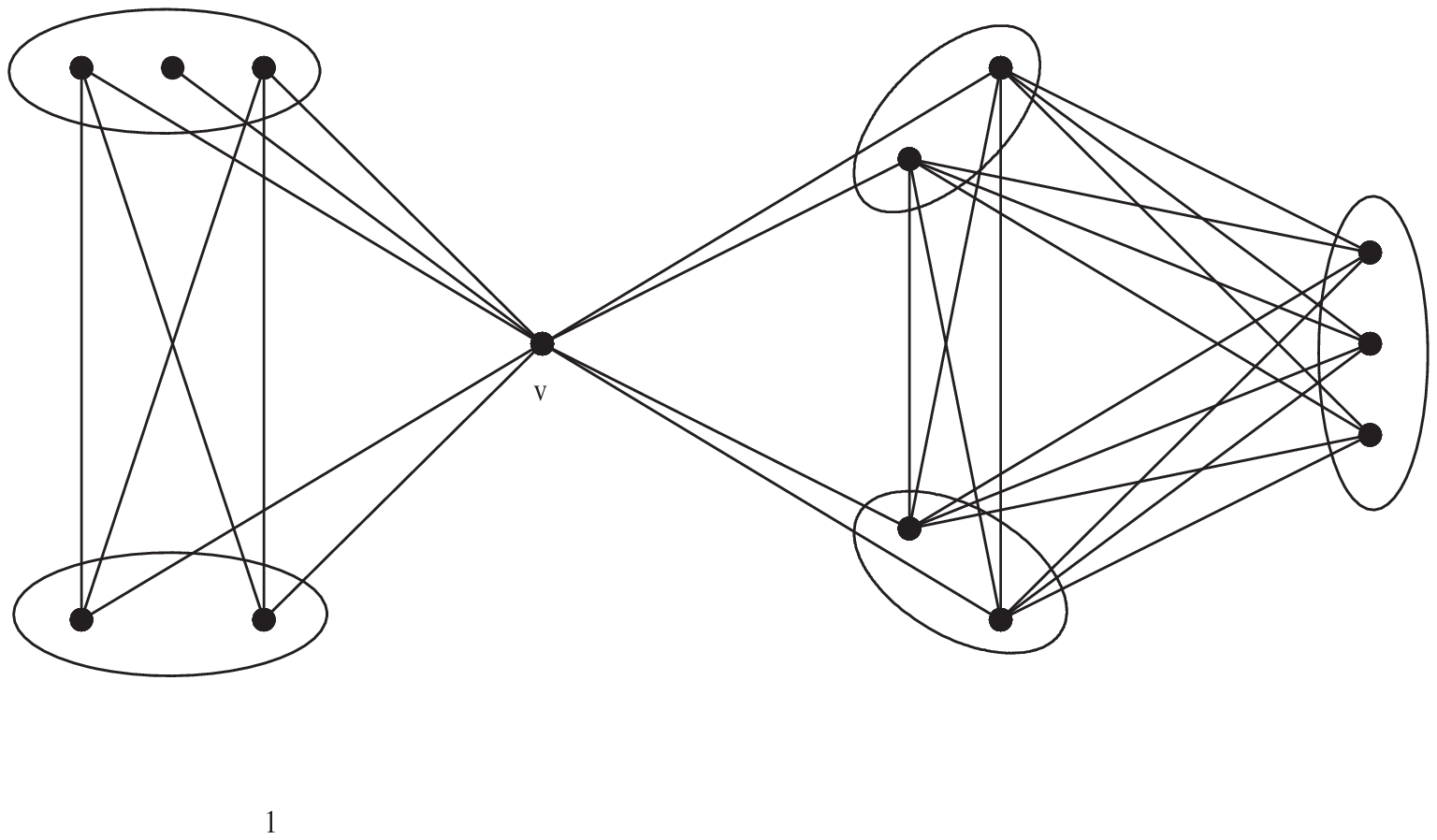} \\
\end{center}
\end{figure}

For nonnegative integers $a, b, c$ and $d$ satisfying $1\leq a+b+c+d\leq k$, let
$$K(q_1,\ldots, q_r; n_1, \ldots, n_k; a^i, b^{-i}, c^{1}, d^{-1}, k-a-b-c-d)=(K(q_1,\ldots, q_r; n_1, \ldots, n_k; a+b+c+d), \mathbb{T}, \varphi)$$
be a complex unit gain graph with underlying graph $K(q_1,\ldots, q_r; n_1, \ldots, n_k; a+b+c+d)$ and gain function $\varphi$:
\begin{center}
$\left\{
  \begin{array}{ll}
    \varphi(e_{vu})=\varphi(e_{uv})^{-1}=i, & \hbox{if $u\in \bigcup_{j=1}^{a}V_j$;} \\  [1mm]
    \varphi(e_{vu})=\varphi(e_{uv})^{-1}=-i, & \hbox{if $u\in \bigcup_{j=a+1}^{a+b}V_j$;} \\ [1mm]
    \varphi(e_{vu})=\varphi(e_{uv})^{-1}=1, & \hbox{if $u\in \bigcup_{j=a+b+1}^{a+b+c}V_j$;} \\ [1mm]
    \varphi(e_{vu})=\varphi(e_{uv})^{-1}=-1, & \hbox{if $u\in \bigcup_{j=a+b+c+1}^{a+b+c+d}V_j$;} \\ [1mm]
    \varphi(e_{uw})=\varphi(e_{wu})^{-1}=1, & \hbox{if $uw$ is any other edge of $K(q_1,\ldots, q_r; n_1, \ldots, n_k; a+b+c+d)${.}}
  \end{array}
\right.$
\end{center}
Similarly, we can define $K(0; n_1, \ldots, n_k; a^i, b^{-i}, c^{1}, d^{-1}, k-a-b-c-d)$. If $K(q_1,\ldots, q_r; n_1, \ldots, n_k; a+b+c+d)$ is $K(r; k; a+b+c+d)$, then we denote
$$K(q_1,\ldots, q_r; n_1, \ldots, n_k; a^i, b^{-i}, c^{1}, d^{-1}, k-a-b-c-d)$$
by $K(r; k; a^i, b^{-i}, c^{1}, d^{-1}, k-a-b-c-d)$. Similarly, if $K(0; n_1, \ldots, n_k; a+b+c+d)$ is $K(0; k; a+b+c+d)$, then we denote
$K(0; n_1, \ldots, n_k; a^i, b^{-i}, c^{1}, d^{-1}, k-a-b-c-d)$
by $K(0; k; a^i, b^{-i}, c^{1}, d^{-1}, k-a-b-c-d)$.

Let $\widetilde{C_n}$ be a mixed cycle with vertex set $\{u_1,u_2,\ldots,u_n\}$. The \textit{value} of $\widetilde{C_n}$, denoted by $h(\widetilde{C_n})$, is defined as $h_{12}h_{23}\ldots h_{n1}$. Clearly, if in one direction the value of $\widetilde{C_n}$ is $h$, then its value is $\overline{h}$ (the conjugate number of $h$) for the reverse direction. Thus, we refer to the mixed cycle $\widetilde{C_n}$ with value 1 (resp. $-1$) as a positive (resp. negative) mixed cycle regardless of the direction of $\widetilde{C_n}$. For some orientation of $\widetilde{C_n}$, the \textit{signature} of $\widetilde{C_n}$, denoted by $\sigma(\widetilde{C_n})$, is defined as the difference between the number of its forward and backward directed edges in $\widetilde{C_n}$. Hence, a mixed cycle is positive (resp. negative) if and only if its signature with respect to an arbitrary direction is congruence to 0 (resp. 2) modulo 4. A mixed graph $\widetilde{G}$ is said to be \textit{positive} if all its mixed cycles are positive, otherwise it is called \textit{non-positive}.

\begin{thm}\label{thm1.2}
Let $\widetilde{G}$ be a connected mixed graph with a cut vertex $v$ and without pendant vertices. Then $p(\widetilde{G})=2$ if and only if $\widetilde{G}$ is switching equivalent to one of the following mixed graphs:
\begin{wst}
  \item[{\rm (i)}] $\widetilde{K}_{m_1, m_2,\ldots, m_{x}}\bullet v\bullet\widetilde{K}_{m_1', m_2',\ldots, m'_y}$, where $\widetilde{K}_{m_1, m_2,\ldots, m_{x}}$ (resp. $\widetilde{K}_{m_1', m_2',\ldots, m'_y}$) is not a mixed star, and $\widetilde{K}_{m_1, m_2,\ldots, m_{x}}$ (resp. $\widetilde{K}_{m_1', m_2',\ldots, m'_y}$) is positive or switching equivalent to some $\overrightarrow{C_3}(t_1, t_2, t_3)$.
  \item[{\rm (ii)}] $K(q_1,\ldots, q_r; n_1, \ldots, n_k; p)$, where $r\geq2, k\geq2, p\geq1$, satisfying one of the following conditions:
\begin{wst}
  \item[{\rm (1)}] $p=1$ and either $n_1=1$ and $k-p\geq2$ or $n_1\geq2$;
  \item[{\rm (2)}] $p\geq2$ and $k-p\leq1$;
  \item[{\rm (3)}] $p\geq2, k-p\geq2$ and $\frac{1}{r}+\frac{1}{p}+\frac{1}{k-p-1}\geq1$.
\end{wst}
  \item[{\rm (iii)}] $K(q_1,\ldots, q_r; n_1, \ldots, n_k; a^i, b^{-i}, 0^{1}, 0^{-1}, s)$, where $r\geq2, k\geq2, a\geq b\geq1, s=k-a-b$, satisfying $a=b=1$ and $r=2$.
  \item[{\rm (iv)}] $K(q_1,\ldots, q_r; n_1, \ldots, n_k; a^i, 0^{-i}, c^{1}, 0^{-1}, s)$, where $r\geq2, k\geq2, a\geq c\geq1, s=k-a-c$, satisfying one of the following conditions:
\begin{wst}
  \item[{\rm (1)}] $a=c=1$ and either $s=0$ or $s=1$;
  \item[{\rm (2)}] $a=c=1, s=2$ and either $r=3$ or $r=4$;
  \item[{\rm (3)}] $a=c=1$, $s=3$ and $r=3$;
  \item[{\rm (4)}] $a=c=1$, $s\geq2$ and $r=2$;
  \item[{\rm (5)}] $a=c=2, s=0$ and $2\leq r\leq4$;
  \item[{\rm (6)}] $a=c=2, s=1$ and $r=2$;
  \item[{\rm (7)}] $a=3, s=0$ and $r=c=2$;
  \item[{\rm (8)}] $a=4, c=r=2$ and $s=0$;
  \item[{\rm (9)}] $a\geq2, c=1$ and $\frac{as-1}{a+s}\leq\frac{1}{r-1}$.
\end{wst}
\end{wst}
\end{thm}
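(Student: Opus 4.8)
My plan is to analyze $\widetilde{G}$ through its cut vertex $v$. By eigenvalue interlacing one has $p(\widetilde{G}-v)\le p(\widetilde{G})\le p(\widetilde{G}-v)+1$, so $p(\widetilde{G})=2$ forces $p(\widetilde{G}-v)\in\{1,2\}$. Since $v$ is a cut vertex, $\widetilde{G}-v$ decomposes into components $\widetilde{G_1},\dots,\widetilde{G_t}$ with $t\ge 2$, and positive indices add over components. Because $\widetilde{G}$ is connected and has no pendant vertex, no component can be a single vertex (its only neighbour would be $v$, giving a pendant edge); hence each $\widetilde{G_i}$ carries an edge and $p(\widetilde{G_i})\ge 1$. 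Thus $\sum_i p(\widetilde{G_i})\ge t\ge 2$, which forces $p(\widetilde{G}-v)=2$, $t=2$, and $p(\widetilde{G_1})=p(\widetilde{G_2})=1$. By the classification of mixed graphs with positive inertia index $1$ (Yuan et al.\ \cite{yuan}, already used in Theorem \ref{thm1.1}), each $\widetilde{G_i}$ is switching equivalent to a complete multipartite graph or to some $\overrightarrow{C_3}(t_1,t_2,t_3)$. So $\widetilde{G}=\widetilde{H_1}\bullet v\bullet\widetilde{H_2}$ with $\widetilde{H_i}=\widetilde{G_i}+v$.

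\textbf{A coalescence inertia criterion.} Writing $v$ last, $H(\widetilde{G})$ is the bordered matrix $\left(\begin{smallmatrix}H_1&0&b_1\\0&H_2&b_2\\b_1^{*}&b_2^{*}&0\end{smallmatrix}\right)$, where $H_i=H(\widetilde{G_i})$ and $b_i$ records the Hermitian weights of the edges from $v$ into $\widetilde{G_i}$. The standard inertia law for bordering a Hermitian matrix $N=\mathrm{diag}(H_1,H_2)$ (congruence to $N\oplus(0-b^{*}N^{+}b)$ when $b\in\mathrm{col}(N)$, and a hyperbolic pair added otherwise) gives the exact criterion I intend to use: since one added vertex raises $p$ by at most one, $p(\widetilde{G})\in\{2,3\}$, and, setting $s_i:=b_i^{*}H_i^{+}b_i$ (a real number, $H_i^{+}$ the pseudoinverse),
$$ p(\widetilde{G})=2 \iff b_1\in\mathrm{col}(H_1),\ b_2\in\mathrm{col}(H_2),\ \text{and}\ s_1+s_2\ge 0.$$
Moreover $p(\widetilde{H_i})=1$ exactly when $s_i\ge 0$. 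This yields the organizing dichotomy: case (i) will be the situation $s_1\ge 0$ and $s_2\ge 0$, whereas cases (ii)--(iv) will be the situation where exactly one $s_i$ is negative (both negative being impossible, as then $s_1+s_2<0$).

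\textbf{From the criterion to the stated families.} The column-space condition is the mechanism producing the graph shapes. For a complete multipartite component the null space of $H_i$ is spanned by the within-class differences, so $\mathrm{col}(H_i)$ is exactly the space of vectors constant on each colour class; hence $b_i\in\mathrm{col}(H_i)$ forces $v$ to attach to every class with a single Hermitian weight and to all or none of its vertices. This is precisely the data encoded by $K(q_1,\dots,q_r;n_1,\dots,n_k;p)$ and its gained forms $K(\dots;a^{i},b^{-i},c^{1},d^{-1},\dots)$. Evaluating $s_i$ on such a class-constant vector through the equitable partition into classes gives the closed form $s_i=\frac{|\sum_j\beta_j|^{2}}{k_i-1}-\sum_j|\beta_j|^{2}$, where $\beta_j$ is the common weight on class $j$; thus $\sum_j|\beta_j|^2$ counts the attached classes and $\sum_j\beta_j=(c-d)+(a-b)i$. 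From this formula, $s_i>0$ holds only for a \emph{full} attachment, so when $s_2<0$ the other side must be fully attached, which is why the $q$-side of (ii)--(iv) is universal with $s_1=\frac{r}{r-1}$. Substituting into $s_1+s_2\ge 0$ turns the inertia inequality into exactly the listed arithmetic conditions, e.g.\ $\frac1r+\frac1p+\frac1{k-p-1}\ge 1$ in (ii)(3) and $\frac{as-1}{a+s}\le\frac1{r-1}$ in (iv)(9), while the sporadic ranges of (iii) and (iv) arise because $s_1=\frac{r}{r-1}\le 2$ caps how negative $s_2$ may be. The remaining side conditions (``$n_1\ge 2$ or $k-p\ge 2$'' in (ii)(1), ``not a mixed star'' in (i)) are precisely the requirement that no vertex of a component become pendant in $\widetilde{G}$.

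\textbf{Main obstacle and the converse.} The genuinely laborious part, and where I expect the main difficulty, is the switching bookkeeping: I must show that any admissible weight pattern on the edges at $v$ can be brought by a four-way switching and a possible converse into the canonical types $a^{i},b^{-i},c^{1},d^{-1}$, reducing to at most two weight types that are either opposite (yielding family (iii)) or adjacent (yielding family (iv)), and then enumerate exactly which tuples $(r,k,a,b,c,d,s)$ survive $s_1+s_2\ge 0$ without creating pendant vertices; this produces the long finite list in (iv). A separate point needing care is the $\overrightarrow{C_3}(t_1,t_2,t_3)$ components: unlike the multipartite case, here the equitable quotient is singular, so $\mathrm{col}(H_i)$ is a \emph{codimension-one} subspace of the class-constant vectors, and the column-space constraint then admits only the full attachment that enlarges the component to another $\overrightarrow{C_3}(t_1',t_2',t_3')$ --- forcing such components to appear only in family (i). Finally, the sufficiency (converse) direction is the same computation run backwards: for each listed graph I verify $b_i\in\mathrm{col}(H_i)$ and that the displayed arithmetic condition is exactly $s_1+s_2\ge 0$, whence $p(\widetilde{G})=2$, and I check directly that each graph has the stated cut vertex and no pendant vertex.
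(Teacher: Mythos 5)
Your skeleton coincides with the paper's: both arguments show $\widetilde{G}-v$ has exactly two components, each with positive index $1$ (hence covered by Lemma \ref{lem1.9} of Yuan et al.), and both ultimately analyze a bordered Hermitian matrix at $v$. Where you genuinely diverge is the computational engine. The paper runs a rank-jump case analysis at $v$ (Lemma \ref{lem1.6} and Claim 3), proves class-constancy of the attachment by explicit rank computations on bordered submatrices (Lemma \ref{lem3.7}, Facts 1--2), and extracts each arithmetic condition from long explicit congruences with determinant and eigenvalue computations (Lemma \ref{3.8}, Corollary \ref{cor}, Lemma \ref{3.10}). You replace all of this by one bordering criterion --- $p(\widetilde{G})=2$ iff $b_1\in\mathrm{col}(H_1)$, $b_2\in\mathrm{col}(H_2)$ and $s_1+s_2\ge0$, with $s_i=b_i^{*}H_i^{+}b_i$ --- plus the equitable-partition closed form $s_i=\frac{|\sum_j\beta_j|^{2}}{k_i-1}-\sum_j|\beta_j|^{2}$. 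Both ingredients are correct: the criterion is the standard inertia law for bordering a Hermitian matrix, the column space of a complete multipartite block is exactly the space of class-constant vectors, and your formula does reproduce the paper's conditions (it yields $\frac1r+\frac1p+\frac1{k-p-1}\ge1$ in (ii)(3), $r=2$ in (iii), $2\le r\le4$ in (iv)(5), and $\frac{as-1}{a+s}\le\frac1{r-1}$ in (iv)(9)). Your route is shorter and more structural: it explains in one line why the $q$-side must be a full, uniformly weighted cone ($s_1>0$ forces full aligned attachment) and why the sporadic lists are finite ($s_1=\frac{r}{r-1}\le2$ caps $-s_2$), facts the paper obtains only through case-by-case matrix work.

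That said, what you submitted is a plan, and the deferred items are the bulk of the proof. (a) The switching normalization --- showing every admissible weight pattern is switching equivalent, possibly after taking the converse, to a canonical form $a^{i},b^{-i},c^{1},d^{-1}$ with at least two of $a,b,c,d$ zero, and matching the survivors of $s_1+s_2\ge0$ bijectively (up to switching) with the stated lists --- is announced, not executed; this is where the paper spends its forbidden-submatrix computation ($p(H_1)=3$), the diagonal switchings $D_1,\dots,D_4$, and the edge-cut two-way switching. (b) Your claim about $\overrightarrow{C_3}(t_1,t_2,t_3)$ components is true but unproven as stated: you need that the (two-dimensional) column space consists of class-constant vectors obeying one extra linear relation, and that with entries restricted to $\{0,\pm1,\pm i\}$ this relation forces one class-weight to vanish and the other two to make $v$ a twin of the untouched class --- the arithmetic point being that a difference of two distinct numbers in $\{\pm1,\pm i\}$ has modulus $\sqrt2$ or $2$, never $0$ or $1$; only then do such components land exclusively in family (i). (c) The translation of ``no pendant vertices'' into the side conditions, the final enumeration of tuples, and the sufficiency direction are described rather than carried out. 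One small imprecision: ``$p(\widetilde{H_i})=1$ exactly when $s_i\ge0$'' holds only under $b_i\in\mathrm{col}(H_i)$; otherwise $p(\widetilde{H_i})=2$ regardless of the sign of $s_i$. None of these gaps looks fatal --- your criterion and formula are adequate tools for every one of them --- but they must be filled before this counts as a proof of Theorem \ref{thm1.2}.
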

\begin{figure}[h!]
\begin{center}
\psfrag{4}{$\widetilde{G}$}\psfrag{5}{$v$}
\psfrag{6}{$\cdots$}\psfrag{7}{$\widetilde{G'}$}\psfrag{a}{Figure 3: Two mixed graphs $\widetilde{G}$ and $\widetilde{G'}$ with positive inertia index 2.}
\psfrag{b}{$n_1$}\psfrag{c}{$n_2$}\psfrag{d}{$n_3$}\psfrag{e}{$n_4(=1)$}\psfrag{8}{$\vdots$}\psfrag{1}{$q_1$}\psfrag{2}{$q_3$}\psfrag{3}{$q_2$}
\includegraphics[width=160mm]{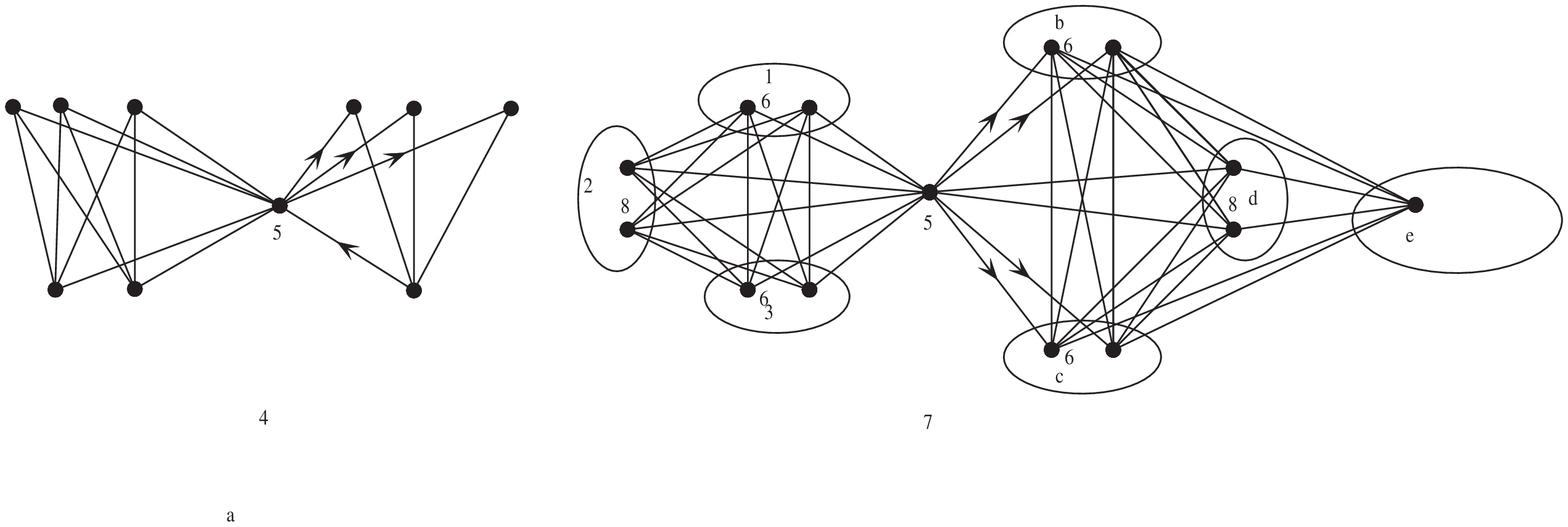} \\
\end{center}
\end{figure}
\begin{ex}
$\widetilde{G}=K(3, 2; 3, 1; 1^i, 1^{-i}, 0^1, 0^{-1}, 0)$ satisfies Theorem \ref{thm1.2} (iii) and $$\widetilde{G'}=K(q_1, q_2, q_3; n_1, n_2, n_3, n_4; 2^i, 0^{-i}, 1^1, 0^{-1}, 1)$$ satisfies Theorem \ref{thm1.2} (iv) (9) (see Figure. 3). It is easy to see that $p(\widetilde{G})=p(\widetilde{G'})=2$.
\end{ex}

{In the rest of this section we recall some known results. In Section 2 we give the
proof of Theorem 1.1. In Section 3 we first establish some technical lemmas that help us characterize the
mixed graphs with cut vertices having exactly two positive eigenvalues and then present the proof of Theorem 1.2.}
\subsection{\normalsize Preliminaries}
In this {subsection}, we introduce some helpful lemmas and known results.
\begin{lem}\cite{horn}\label{lem1.4}
(Sylvester's law of inertia) If two Hermitian matrices $M$ and $N$ are congruent, then they have the same positive (resp. negative) inertia indices.
\end{lem}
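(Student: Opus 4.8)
The plan is to prove Sylvester's law through the extremal (variational) characterization of the inertia indices, which renders congruence-invariance transparent. Recall that two Hermitian matrices are \emph{congruent} when $N=P^{*}MP$ for some invertible $P$. The central quantity is
\[
p(M)=\max\{\dim W : W \text{ a subspace with } x^{*}Mx>0 \text{ for every nonzero } x\in W\},
\]
and the entire argument reduces to two claims: (a) this maximum equals the number of positive eigenvalues of $M$, and (b) the right-hand side is unchanged when $M$ is replaced by any congruent matrix $N$.

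For (a), I would invoke the spectral theorem: since $M$ is Hermitian there is a unitary $U$ with $M=U\,\mathrm{diag}(\lambda_1,\dots,\lambda_n)\,U^{*}$ and all $\lambda_j$ real. Let $\lambda_1,\dots,\lambda_k$ be the positive eigenvalues with orthonormal eigenvectors $u_1,\dots,u_k$. On $W_{+}=\mathrm{span}\{u_1,\dots,u_k\}$ one computes $x^{*}Mx=\sum_{j=1}^{k}\lambda_j|c_j|^2>0$ for every nonzero $x=\sum c_j u_j$, so $p(M)\geq k$. For the reverse inequality, let $W_{\leq 0}$ be the span of the eigenvectors belonging to the nonpositive eigenvalues, so that $\dim W_{\leq 0}=n-k$ and $x^{*}Mx\leq 0$ there. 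If some subspace $W$ had $\dim W>k$, then $\dim W+\dim W_{\leq 0}>n$ would force $W\cap W_{\leq 0}\neq\{0\}$, producing a nonzero vector with $x^{*}Mx\leq 0$; hence no positive-definite subspace can exceed dimension $k$, and $p(M)=k$.

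For (b), observe that if $N=P^{*}MP$ with $P$ invertible, then $x^{*}Nx=(Px)^{*}M(Px)$ for every $x$. Thus $W$ is a positive-definite subspace for $N$ if and only if $PW$ is one for $M$, and since $P$ is an isomorphism, $\dim(PW)=\dim W$. This yields a dimension-preserving bijection between positive-definite subspaces of $N$ and of $M$, so the extremal characterization gives $p(N)=p(M)$. The equality of negative indices then follows immediately by applying the same reasoning to $-M$ and $-N$, which are again Hermitian and congruent (via the same $P$), together with the identity $n(M)=p(-M)$.

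The argument is essentially routine once the variational description is in place, and the spectral theorem does the heavy lifting. The only genuinely delicate point is the upper bound in part (a): the dimension-counting intersection argument must be spelled out with care to conclude that no positive-definite subspace can have dimension exceeding the number of positive eigenvalues. An alternative route, which I would keep in reserve, is to reduce both matrices by congruence to the canonical diagonal form $\mathrm{diag}(I_{p},-I_{n},0)$ and then prove the uniqueness of this form; but that uniqueness is established by exactly the same subspace argument, so the variational formulation is the more economical path.
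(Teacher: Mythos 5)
Your proposal is correct and complete. Note that the paper itself offers no proof of this lemma: it is quoted as a classical result with a citation to Horn and Johnson's \emph{Matrix Analysis}, so there is no in-paper argument to compare against. Your route — the variational characterization $p(M)=\max\{\dim W : x^{*}Mx>0 \ \text{for all nonzero}\ x\in W\}$, established via the spectral theorem and the dimension-counting intersection argument, followed by the observation that $W\mapsto PW$ is a dimension-preserving bijection between positive-definite subspaces of $N=P^{*}MP$ and of $M$ — is the standard textbook proof, essentially the one found in the cited reference. All steps check out: the lower bound on the positive eigenspace, the upper bound via $\dim W+\dim W_{\leq 0}>n$ forcing a nonzero vector in the intersection, the invertibility of $P$ guaranteeing $\dim(PW)=\dim W$, and the reduction of the negative-index claim to the positive one through $n(M)=p(-M)$ with $-N=P^{*}(-M)P$. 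This is exactly the level of rigor the lemma requires, and your remark that the canonical-form route ultimately rests on the same subspace argument is accurate.
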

\begin{lem}\cite{lxl}\label{lem1.5}
Let $\widetilde{G}$ be a mixed graph. Then the following conditions are equivalent.
\begin{wst}
  \item[{\rm (1)}] Every induced mixed cycle of $\widetilde{G}$ is positive.
  \item[{\rm (2)}] $\widetilde{G}$ is positive.
  \item[{\rm (3)}] $\widetilde{G}$ is switching equivalent to its underlying graph.
\end{wst}
\end{lem}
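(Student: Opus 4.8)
The plan is to prove the cycle of implications $(2)\Rightarrow(1)$, $(1)\Rightarrow(2)$, $(2)\Rightarrow(3)$ and $(3)\Rightarrow(2)$, which together yield the stated equivalence. The implication $(2)\Rightarrow(1)$ is immediate: every induced mixed cycle is in particular a mixed cycle, so if all mixed cycles of $\widetilde{G}$ are positive then so are the induced ones.

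For $(1)\Rightarrow(2)$ I would argue by strong induction on the length of a mixed cycle $\widetilde{C}$, exploiting the multiplicativity of the value under cutting along a chord. The base case is triangles, which have no chord, are therefore induced, and are positive by (1). For the inductive step, if $\widetilde{C}$ is not induced, pick a chord $e$ joining two nonadjacent vertices $u,w$ of $\widetilde{C}$; it splits $\widetilde{C}$ into two internally disjoint paths $P_1$ (from $u$ to $w$) and $P_2$ (from $w$ to $u$) and produces two strictly shorter mixed cycles $\widetilde{C_1}=P_1+e$ and $\widetilde{C_2}=P_2+e$. Orienting $\widetilde{C_1}$ as $u\to_{P_1}w\to_e u$ and $\widetilde{C_2}$ as $w\to_{P_2}u\to_e w$, the two traversals of $e$ contribute $h_{wu}h_{uw}=|h_e|^2=1$, so one obtains $h(\widetilde{C})=h(\widetilde{C_1})\,h(\widetilde{C_2})$. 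By the induction hypothesis both factors equal $1$, whence $h(\widetilde{C})=1$; induced cycles of the current length are positive directly by (1). Thus every mixed cycle is positive.

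For $(2)\Rightarrow(3)$, the main substantive step, I would use a spanning-tree gauge construction. Fix a spanning tree $T$ of $G$ with root $r$, set $\theta(r)=1$, and propagate $\theta$ along $T$ by $\theta(c)=\theta(p)\,\overline{h_{pc}}$ for each tree edge from parent $p$ to child $c$; this is precisely the switching that turns every tree edge into an undirected edge of gain $1$. Since each $h_{pc}\in\{1,\pm i\}$, an induction along $T$ shows $\theta(v)\in\{\pm1,\pm i\}$ for every vertex, so $Q=\mathrm{diag}(\theta(v_1),\ldots,\theta(v_n))$ realizes a genuine four-way switching in the sense of Definition~1. After this switching, for each non-tree edge $e=uv$ the value of its fundamental cycle $C_e$ (the unique cycle in $T+e$) equals the new gain on $e$, because all tree edges now carry gain $1$. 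As the value of a cycle is a switching invariant and $\widetilde{G}$ is positive, $h(C_e)=1$, forcing the new gain on $e$ to be $1$ as well. Hence every edge acquires gain $1$, i.e. $\widetilde{G}$ is switching equivalent to its underlying graph $G$.

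Finally, $(3)\Rightarrow(2)$ follows from the switching invariance of the value: in the product $h_{v_1v_2}h_{v_2v_3}\cdots h_{v_nv_1}$ computed after switching by $\theta$, each vertex $v_j$ contributes a factor $\theta(v_j)^{-1}$ and a factor $\theta(v_j)$ which cancel (including the wrap-around at $v_1$), so the value is unchanged; since the underlying graph has all gains $1$, every cycle has value $1$ and $\widetilde{G}$ is positive. The main obstacle I anticipate lies in $(2)\Rightarrow(3)$, namely the careful bookkeeping needed to confirm that the tree-propagated $\theta$ stays inside $\{\pm1,\pm i\}$, so that it is a legitimate four-way switching rather than an arbitrary complex-unit gauge, together with the verification that the fundamental cycles supply enough constraints to force every gain to $1$. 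The multiplicativity identity $h(\widetilde{C})=h(\widetilde{C_1})h(\widetilde{C_2})$ in $(1)\Rightarrow(2)$ likewise requires attention to the orientation conventions so that the chord contributions cancel exactly.
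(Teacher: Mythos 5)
This lemma is quoted in the paper from Liu and Li \cite{lxl} without proof, so there is no in-paper argument to compare against; judged on its own merits, your proof is correct and is essentially the standard argument for results of this type. The chord-splitting induction for $(1)\Rightarrow(2)$ is sound: the two traversals of the chord contribute $h_{uw}h_{wu}=|h_{uw}|^2=1$, so $h(\widetilde{C})=h(\widetilde{C_1})h(\widetilde{C_2})$, and triangles, being necessarily chordless in a simple underlying graph, form a legitimate base case. The spanning-tree gauge for $(2)\Rightarrow(3)$ also works as you describe: since $\{\pm1,\pm i\}$ is a group closed under conjugation and contains $\{1,\pm i\}$, the tree-propagated $\theta$ stays in $\{\pm1,\pm i\}$, so $Q=\mathrm{diag}(\theta(v_1),\ldots,\theta(v_n))$ is a legitimate four-way switching; tree edges acquire gain $\theta(p)^{-1}h_{pc}\theta(p)\overline{h_{pc}}=1$, and the switching invariance of cycle values applied to each fundamental cycle forces every non-tree gain to $1$. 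One point worth making explicit there: for an arbitrary diagonal $Q$ the matrix $Q^{-1}H(\widetilde{G})Q$ need not be the Hermitian adjacency matrix of any mixed graph (an entry $-1$ could appear), but your conclusion that all gains equal $1$ shows the switched matrix is exactly $A(G)$, entries in $\{0,1\}$, so the FWS definition is genuinely satisfied. Two small touch-ups: for disconnected $\widetilde{G}$ take a spanning forest with one root per component, and since the paper's notion of switching equivalence also permits taking the converse, in $(3)\Rightarrow(2)$ you should add that the converse replaces each cycle value $h$ by $\overline{h}$, which fixes the value $1$ and hence preserves positivity.
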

\begin{lem}\cite{WW,yuan}\label{lemp}
Let $v_1$ be a pendant vertex of mixed graph $\widetilde{G}$ with neighbor $v_2$. Then
$$p(\widetilde{G})=p(\widetilde{G}-v_1-v_2)+1, \ \ \ n(\widetilde{G})=n(\widetilde{G}-v_1-v_2)+1, \ \ \ \eta(\widetilde{G})=\eta(\widetilde{G}-v_1-v_2).$$
\end{lem}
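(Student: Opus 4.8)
The plan is to realize $H(\widetilde{G})$ in a congruence-normal form that isolates the pendant edge $v_1v_2$, and then read off all three indices from Sylvester's law of inertia (Lemma \ref{lem1.4}). First I would order the vertices so that $v_1, v_2$ come first, writing
$$H(\widetilde{G}) = \begin{pmatrix} 0 & \alpha & \mathbf{0}^{*} \\ \overline{\alpha} & 0 & b^{*} \\ \mathbf{0} & b & M \end{pmatrix},$$
where $\alpha = h_{v_1v_2} \in \{1, i, -i\}$ (so $|\alpha| = 1$), $b$ is the column vector of the entries $h_{v_2u}$ over $u \in V(\widetilde{G})\setminus\{v_1,v_2\}$, and $M = H(\widetilde{G} - v_1 - v_2)$. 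The row and column corresponding to $v_1$ are zero apart from the single entry $\alpha$, precisely because $v_1$ is pendant with unique neighbour $v_2$; the diagonal vanishes since a Hermitian adjacency matrix has zero diagonal.

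The key step is to use the clean $v_1$-row to annihilate the coupling $b$ between $v_2$ and the remaining vertices by a congruence. Concretely, I would apply $H(\widetilde{G}) \mapsto P^{*}H(\widetilde{G})P$, where $P$ fixes $e_{v_1}, e_{v_2}$ and replaces each $e_u$ ($u \neq v_1, v_2$) by $e_u + \gamma_u e_{v_1}$, choosing $\gamma_u$ so that $h_{v_2u} + \overline{\gamma_u}\alpha = 0$; this is solvable because $\alpha \neq 0$. Since $e_{v_1}^{*}H(\widetilde{G})e_{v_1} = 0$ and $e_{v_1}$ is $H$-orthogonal to every $e_u$ with $u \neq v_1, v_2$, these substitutions leave $M$ unchanged and create no new coupling with $v_1$; they merely cancel $b$ and, by conjugate symmetry, $b^{*}$. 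Hence $H(\widetilde{G})$ is congruent to the direct sum
$$\begin{pmatrix} 0 & \alpha \\ \overline{\alpha} & 0 \end{pmatrix} \oplus M.$$
The $2 \times 2$ block has characteristic polynomial $\lambda^{2} - |\alpha|^{2} = \lambda^{2} - 1$, so its eigenvalues are $\pm 1$ and its inertia is $(1,1,0)$. By Lemma \ref{lem1.4}, ${\rm In}(\widetilde{G}) = (1,1,0) + {\rm In}(M)$, which is exactly the three claimed identities $p(\widetilde{G}) = p(M) + 1$, $n(\widetilde{G}) = n(M) + 1$, and $\eta(\widetilde{G}) = \eta(M)$.

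The computation is elementary, and the only point requiring genuine care is the \emph{no fill-in} claim in the congruence: that eliminating through $v_1$ neither perturbs $M$ nor regenerates a coupling to $v_1$. This is exactly where the pendant hypothesis is used — the single nonzero entry $\alpha$ in $v_1$'s row guarantees that $e_{v_1}$ is $H$-orthogonal to everything but $e_{v_2}$, so the elimination touches only the $v_2$-column and conjugately the $v_2$-row. I do not expect any further obstacle; an alternative route would expand $\det(H(\widetilde{G}) - \lambda I)$ along the pendant row to relate characteristic polynomials directly, but the congruence argument is cleanest and delivers all three indices simultaneously.
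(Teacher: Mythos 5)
Your proof is correct. Note, however, that the paper does not prove this lemma at all: it is quoted with citations to Wei--Feng--Li and Yuan--Wang--Xu, so there is no in-paper argument to compare against. Your congruence argument is the standard one and is essentially the same technique the authors themselves deploy later (in the proof of Lemma \ref{lem3.3} and in Case 3 of Claim 3 of Theorem \ref{thm1.2}, where a matrix $S$ or $W$ with a column of elimination coefficients is used to decouple a vertex and then Lemma \ref{lem1.4} is invoked); here the pendant hypothesis makes the elimination unconditional because the pivot $\alpha$ is a unit, whereas in those later arguments solvability of $H(\widetilde{M_1}-v')X=\alpha$ must be assumed via a rank condition. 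One cosmetic remark: your elimination equation $h_{v_2u}+\overline{\gamma_u}\alpha=0$ conflates $h_{v_2u}$ with its conjugate $h_{uv_2}$ depending on whether you are clearing the $(u,v_2)$ or the $(v_2,u)$ entry, but since the two entries are conjugates, annihilating one annihilates the other, and the choice $\overline{\gamma_u}=-h_{uv_2}/\alpha$ works because $|\alpha|=1$; the no-fill-in verification (that $M$ is untouched and no new coupling to $v_1$ appears) is exactly right and is indeed the only point where pendancy is used.
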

\begin{lem}\cite{WW}\label{leml}
Let $\widetilde{G}$ be a mixed graph {with $u\in V(G)$}. Then
\begin{align*}
p(\widetilde{G})-1\leq p(\widetilde{G}-u)\leq p(\widetilde{G}), \ \ \ n(\widetilde{G})-1\leq n(\widetilde{G}-u)\leq n(\widetilde{G}).
\end{align*}
\end{lem}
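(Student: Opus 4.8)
The plan is to read the inequalities off the principal-submatrix structure of the Hermitian adjacency matrix, using the variational (subspace) description of the positive inertia index. Fix an ordering of $V(G)$ with $u$ placed last, so that
$$H(\widetilde{G})=\begin{pmatrix} H(\widetilde{G}-u) & b \\ b^{*} & 0\end{pmatrix},$$
where $b\in\mathbb{C}^{n-1}$ records the entries of the edges at $u$ and the $(u,u)$-entry is $0$ because $\widetilde{G}$ has no loops. Thus $H(\widetilde{G}-u)$ is exactly the principal submatrix of $H(\widetilde{G})$ obtained by deleting the row and column indexed by $u$. The whole lemma then follows from the single observation that deleting one index restricts the underlying quadratic form to a hyperplane of $\mathbb{C}^{n}$, and a hyperplane meets any subspace in codimension at most one.

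Recall that for a Hermitian matrix $M$ the positive inertia index equals the largest dimension of a subspace on which the form $x\mapsto x^{*}Mx$ is positive definite; this is the standard subspace form of Sylvester's law of inertia (Lemma \ref{lem1.4}). Write $W=\{x\in\mathbb{C}^{n}:x_{u}=0\}$, a hyperplane on which $x^{*}H(\widetilde{G})x=(x')^{*}H(\widetilde{G}-u)x'$, where $x'$ is $x$ with its $u$-coordinate removed. For the upper bound $p(\widetilde{G}-u)\leq p(\widetilde{G})$ I would take a positive-definite subspace $U'$ of $H(\widetilde{G}-u)$ with $\dim U'=p(\widetilde{G}-u)$ and lift it into $W$ by appending a zero $u$-coordinate; the lift is positive definite for $H(\widetilde{G})$, so $p(\widetilde{G})\geq p(\widetilde{G}-u)$. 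For the lower bound $p(\widetilde{G}-u)\geq p(\widetilde{G})-1$ I would take a maximal positive-definite subspace $U$ of $H(\widetilde{G})$, with $\dim U=p(\widetilde{G})$, and intersect it with $W$; since $W$ has codimension $1$ we get $\dim(U\cap W)\geq p(\widetilde{G})-1$, and the coordinate projection $W\to\mathbb{C}^{n-1}$ carries $U\cap W$ injectively onto a positive-definite subspace of $H(\widetilde{G}-u)$. Together these give $p(\widetilde{G})-1\leq p(\widetilde{G}-u)\leq p(\widetilde{G})$. Equivalently, this is Cauchy's interlacing theorem specialized to a codimension-one principal submatrix.

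For the negative inertia index I would simply apply the inequalities just obtained to the Hermitian matrix $-H(\widetilde{G})$: deleting the row and column at $u$ yields $-H(\widetilde{G}-u)$, while the positive inertia index of $-H(\widetilde{G})$ (resp. $-H(\widetilde{G}-u)$) equals $n(\widetilde{G})$ (resp. $n(\widetilde{G}-u)$). Hence $n(\widetilde{G})-1\leq n(\widetilde{G}-u)\leq n(\widetilde{G})$, completing the lemma.

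There is no serious obstacle here: the entire content is that a hyperplane meets any subspace in codimension at most one, so each inertia index can fall by at most one and cannot increase when a vertex is removed. The only points requiring care are the bookkeeping in the lower-bound step—verifying that the coordinate projection restricted to $U\cap W$ is injective and preserves positive-definiteness of the form—and recording that the vanishing diagonal entry $h_{uu}=0$ is immaterial, since the argument goes through verbatim for any bordering scalar.
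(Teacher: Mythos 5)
Your proof is correct. Note, though, that the paper offers no proof of this lemma at all: it is quoted verbatim from the cited reference \cite{WW}, where it is obtained as the eigenvalue-interlacing statement for a Hermitian matrix and its codimension-one principal submatrix (Cauchy interlacing). Your subspace argument is precisely an elementary, self-contained proof of that interlacing inequality specialized to inertia indices: writing $H(\widetilde{G}-u)$ as the principal submatrix of $H(\widetilde{G})$, lifting a maximal positive-definite subspace through the hyperplane $W=\{x:x_u=0\}$ for the upper bound, and cutting one down by the hyperplane for the lower bound, then passing to $-H(\widetilde{G})$ for the negative index. All steps check out: the restriction of the form to $W$ is the form of $H(\widetilde{G}-u)$ because $h_{uu}=0$ (and, as you observe, any bordering scalar would do), the projection $W\to\mathbb{C}^{n-1}$ is an isomorphism so injectivity is automatic, and $\dim(U\cap W)\geq\dim U-1$ gives the drop of at most one. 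One small bookkeeping point: the variational characterization of $p(M)$ as the maximal dimension of a subspace on which $x\mapsto x^{*}Mx$ is positive definite is not literally Lemma \ref{lem1.4} (Sylvester's law of inertia on congruence); it is the standard Courant--Fischer/min-max consequence, so you should cite it as such rather than as Lemma \ref{lem1.4}. What your route buys is independence from spectral machinery beyond this characterization; what the reference's route buys is brevity, since interlacing is off the shelf. Either way the lemma stands.
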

\begin{lem}\cite{yuan}\label{lem1.6}
Let $v$ be a cut vertex of a mixed graph $\widetilde{G}$, and let $\widetilde{G_1}, \widetilde{G_2}, \ldots, \widetilde{G_l}$ be all the components of $\widetilde{G}-v$.
\begin{wst}
  \item[{\rm (1)}] If there exists a component $\widetilde{G_i}$ satisfying $rk(\widetilde{G_i}+v)=rk(\widetilde{G_i})+2$, then $${\rm In}(\widetilde{G})={\rm In}(\widetilde{G}-v)+(1, 1, -1)=\sum_{j=1}^l{\rm In}(\widetilde{G_j})+(1, 1, -1).$$
  \item[{\rm (2)}] If there exists a component $\widetilde{G_i}$ satisfying $rk(\widetilde{G_i}+v)=rk(\widetilde{G_i})$, then $${\rm In}(\widetilde{G})={\rm In}(\widetilde{G}_i)+{\rm In}(\widetilde{G}-\widetilde{G}_i).$$
      In particular, if $rk(\widetilde{G_i}+v)=rk(\widetilde{G_i})$ for each $i=1, 2,\ldots, l$, then
      $${\rm In}(\widetilde{G})={\rm In}(\widetilde{G}-v)+(0, 0, 1)=\sum_{j=1}^l{\rm In}(\widetilde{G_j})+(0, 0, 1).$$
\end{wst}
\end{lem}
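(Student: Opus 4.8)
The plan is to exploit the block structure a cut vertex imposes on the Hermitian adjacency matrix and reduce everything to Sylvester's law of inertia (Lemma~\ref{lem1.4}). Order the vertices so that the blocks of $\widetilde{G_1},\dots,\widetilde{G_l}$ come first and $v$ comes last. Since a mixed graph has no loops, the diagonal entry at $v$ is $0$, so
$$H(\widetilde{G})=\begin{pmatrix} D & \alpha \\ \alpha^{*} & 0\end{pmatrix},\qquad D=\bigoplus_{j=1}^{l}H(\widetilde{G_j})=H(\widetilde{G}-v),$$
where $\alpha=(\alpha_1^{\top},\dots,\alpha_l^{\top})^{\top}$ and $\alpha_j$ records the edges joining $v$ to $\widetilde{G_j}$. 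Because $D$ is block diagonal, $\mathrm{Col}(D)=\bigoplus_j\mathrm{Col}(H(\widetilde{G_j}))$, so $\alpha\in\mathrm{Col}(D)$ if and only if $\alpha_j\in\mathrm{Col}(H(\widetilde{G_j}))$ for every $j$.

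Next I would prove a single-component dictionary translating the rank hypotheses into column-space statements. For $M_i:=H(\widetilde{G_i})$ the bordered matrix $H(\widetilde{G_i}+v)=\left(\begin{smallmatrix} M_i & \alpha_i \\ \alpha_i^{*} & 0\end{smallmatrix}\right)$ behaves as follows: if $\alpha_i\notin\mathrm{Col}(M_i)$ then $rk(\widetilde{G_i}+v)=rk(\widetilde{G_i})+2$; while if $\alpha_i=M_iy_i$, the congruence by $\left(\begin{smallmatrix} I & -y_i \\ 0 & 1\end{smallmatrix}\right)$ turns it into $M_i\oplus(c_i)$ with $c_i=-y_i^{*}M_iy_i\in\mathbb{R}$, so that $rk(\widetilde{G_i}+v)=rk(\widetilde{G_i})$ exactly when $c_i=0$ and equals $rk(\widetilde{G_i})+1$ otherwise. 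Hence the hypothesis of (1) is precisely $\alpha_i\notin\mathrm{Col}(M_i)$, and the hypothesis of (2) is precisely $\alpha_i=M_iy_i$ with $y_i^{*}M_iy_i=0$.

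For part (1), one such $\widetilde{G_i}$ forces $\alpha\notin\mathrm{Col}(D)$ by the direct-sum remark, and I would then invoke the general bordering fact: for Hermitian $M$ with $\alpha\notin\mathrm{Col}(M)$, the matrix $\left(\begin{smallmatrix} M & \alpha \\ \alpha^{*} & \beta\end{smallmatrix}\right)$ has inertia ${\rm In}(M)+(1,1,-1)$ independently of $\beta$. To prove this, clear the $\mathrm{Col}(M)$-part of $\alpha$ by a triangular congruence, then diagonalize $M$ so that the surviving $\alpha_{\perp}\in\ker M$ lies along a single null basis vector $u$; the matrix decouples into the eigenblocks of $M$ plus the $2\times2$ block $\left(\begin{smallmatrix} 0 & \|\alpha_{\perp}\| \\ \|\alpha_{\perp}\| & \beta'\end{smallmatrix}\right)$ coupling $u$ with $v$, whose determinant $-\|\alpha_{\perp}\|^{2}<0$ supplies exactly one positive and one negative eigenvalue. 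Applying this with $M=D$ and $\beta=0$ yields ${\rm In}(\widetilde{G})={\rm In}(\widetilde{G}-v)+(1,1,-1)=\sum_j{\rm In}(\widetilde{G_j})+(1,1,-1)$.

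For part (2), I would perform one explicit congruence on the whole matrix. Writing $H(\widetilde{G})=\left(\begin{smallmatrix} M_i & 0 & \alpha_i\\ 0 & D' & \alpha'\\ \alpha_i^{*} & \alpha'^{*} & 0\end{smallmatrix}\right)$ with $D'=\bigoplus_{j\ne i}M_j$, the transformation $P=\left(\begin{smallmatrix} I & 0 & -y_i\\ 0 & I & 0\\ 0 & 0 & 1\end{smallmatrix}\right)$ sends $H(\widetilde{G})$ to $M_i\oplus\left(\begin{smallmatrix} D' & \alpha' \\ \alpha'^{*} & 0\end{smallmatrix}\right)=H(\widetilde{G_i})\oplus H(\widetilde{G}-\widetilde{G_i})$: the block $\alpha_i-M_iy_i$ vanishes, and the new $(v,v)$ entry is $-\alpha_i^{*}y_i=-y_i^{*}M_iy_i=0$ precisely by the hypothesis $c_i=0$. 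Sylvester's law then gives ${\rm In}(\widetilde{G})={\rm In}(\widetilde{G_i})+{\rm In}(\widetilde{G}-\widetilde{G_i})$. If every component is of this type, taking $y=\bigoplus_j y_j$ gives $\alpha=Dy$ and $y^{*}Dy=\sum_j y_j^{*}M_jy_j=0$, so the single bordering by $v$ is congruent to $D\oplus(0)$, whence ${\rm In}(\widetilde{G})=\sum_j{\rm In}(\widetilde{G_j})+(0,0,1)$. The main obstacle is the linear-algebra bookkeeping in part (1): isolating the hyperbolic $2\times2$ block and confirming that the $(1,1,-1)$ increment is insensitive to the $(v,v)$ diagonal, so that the conclusion holds irrespective of the types of the remaining components.
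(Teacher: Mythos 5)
Your proposal is correct, but note that the paper itself contains no proof of this lemma: it is imported verbatim from \cite{yuan} as a known result, so there is no internal argument to match. What you have written is a sound self-contained proof, and its machinery is exactly the congruence technique the paper deploys elsewhere: your triangular congruence clearing $\alpha_i=M_iy_i$ and reading off the real residual $-y_i^{*}M_iy_i$ at the $(v,v)$ entry is the same Schur-complement-style move the authors use in Lemma~\ref{lem3.3} (the matrix $S$ with blocks $-X_0$, $-Y_0$) and again in Case 3 of Claim 3 in the proof of Theorem~\ref{thm1.2} (the matrix $W$), followed in each case by Sylvester's law (Lemma~\ref{lem1.4}). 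Your dictionary is the more systematic version: the trichotomy $\alpha_i\notin\mathrm{Col}(M_i)$ versus $\alpha_i=M_iy_i$ with $y_i^{*}M_iy_i\neq 0$ or $=0$ corresponds precisely to the rank increments $+2$, $+1$, $0$, and your part (1) argument --- decomposing $\alpha=\alpha_M+\alpha_\perp$ with $\alpha_\perp\in\ker M$, aligning $\alpha_\perp$ with one null vector by a unitary congruence, and extracting the hyperbolic $2\times2$ block of negative determinant --- correctly establishes the $(1,1,-1)$ increment independently of the diagonal entry, which is the one point needing care and which you handled. Two small remarks: well-definedness of $c_i=-y_i^{*}M_iy_i$ across different solutions $y_i$ follows since any two solutions differ by an element of $\ker M_i$, orthogonal to $\mathrm{Col}(M_i)$ (worth one line); and in part (1) you should note, as you implicitly do, that the phase of the off-diagonal entry can be normalized to $\|\alpha_\perp\|>0$ by a diagonal unitary congruence. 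With those lines added, your proof is complete and would serve as a legitimate replacement for the citation.
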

\begin{lem}\cite{34}\label{si}
Let $\widetilde{C_n}$ be a mixed cycle of order $n$. Then
\begin{center}
$\eta(\widetilde{C_n})=\left\{
  \begin{array}{ll}
    1, & \hbox{if $n$ is odd, $\sigma(\widetilde{C_n})$ is odd;} \\[1mm]
    0, & \hbox{if $n$ is odd, $\sigma(\widetilde{C_n})$ is even;} \\ [1mm]
    0, & \hbox{if $n$ is even, $\sigma(\widetilde{C_n})$ is odd;} \\ [1mm]
    0, & \hbox{if $n$ is even, $\sigma(\widetilde{C_n})$ is even, $n+\sigma(\widetilde{C_n})\equiv 2 (mod 4)$;} \\ [1mm]
    2, & \hbox{if $n$ is even, $\sigma(\widetilde{C_n})$ is even, $n+\sigma(\widetilde{C_n})\equiv 0 (mod 4)${.}}
  \end{array}
\right.$
\end{center}
\end{lem}
\begin{lem}\cite{yuan}\label{lem1.9}
Let $\widetilde{G}$ be a mixed graph. Then $p(\widetilde{G})=1$ if and only if the subgraph induced by all non-isolated vertices of $\widetilde{G}$ is
switching equivalent either to a complete multipartite graph or to some $\overrightarrow{C_3}(t_1, t_2, t_3)$.
\end{lem}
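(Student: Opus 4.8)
The plan is to reduce the condition $p(\widetilde{G})=2$ to a single scalar inequality localized at the cut vertex $v$, and then to unpack that inequality into the combinatorial descriptions (i)--(iv).

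First I would localize the problem at $v$. Write $\widetilde{G_1},\ldots,\widetilde{G_l}$ for the components of $\widetilde{G}-v$. Since $\widetilde{G}$ has no pendant vertex, a single-vertex component of $\widetilde{G}-v$ would be pendant in $\widetilde{G}$; hence every component has at least two vertices, is connected, and so carries an edge, giving $p(\widetilde{G_j})\geq1$ for all $j$. As $v$ is a cut vertex we have $l\geq2$, and as deleting one vertex changes $p$ by at most $1$ (Lemma \ref{leml}) we have $p(\widetilde{G}-v)=\sum_j p(\widetilde{G_j})\in\{p(\widetilde{G})-1,p(\widetilde{G})\}=\{1,2\}$. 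The only way to express a member of $\{1,2\}$ as a sum of at least two positive integers is $2=1+1$, so there are \emph{exactly two} components and $p(\widetilde{G_1})=p(\widetilde{G_2})=1$. Lemma \ref{lem1.9} then pins down each side: up to switching, each $\widetilde{G_j}$ is a complete multipartite graph or some $\overrightarrow{C_3}(t_1,t_2,t_3)$, which is precisely the list of blocks appearing in the statement.

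Next I would evaluate $p(\widetilde{G})$ by a generalized Schur complement at $v$. Ordering $v$ last, $H(\widetilde{G})$ is the bordered matrix with diagonal blocks $H(\widetilde{G_1}),H(\widetilde{G_2})$, off-diagonal gain vectors $b_1,b_2$ pointing from $v$, and a zero $(v,v)$-entry. If some $b_i\notin\mathrm{Col}\,H(\widetilde{G_i})$ then $rk(\widetilde{G_i}+v)=rk(\widetilde{G_i})+2$, and Lemma \ref{lem1.6}(1) forces $p(\widetilde{G})=p(\widetilde{G}-v)+1=3$; hence both $b_i$ lie in the respective column spaces. Setting $\alpha_i=b_i^{*}H(\widetilde{G_i})^{+}b_i$, the Schur complement of the block-diagonal part is the scalar $-(\alpha_1+\alpha_2)$, and inertia additivity (Haynsworth, a consequence of Sylvester's law, Lemma \ref{lem1.4}) yields $p(\widetilde{G})=p(\widetilde{G_1})+p(\widetilde{G_2})+p\big(-(\alpha_1+\alpha_2)\big)=2+p\big(-(\alpha_1+\alpha_2)\big)$, where the last term is $1$ if $\alpha_1+\alpha_2<0$ and $0$ otherwise. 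Thus the theorem collapses to the clean criterion $p(\widetilde{G})=2\iff \alpha_1+\alpha_2\geq0$. Since the same computation inside $\widetilde{G_i}+v$ gives $p(\widetilde{G_i}+v)=1+p(-\alpha_i)$, the sign of each $\alpha_i$ reads off the two possible shapes: if both $\alpha_i\geq0$ then $p(\widetilde{G_i}+v)=1$, so (by Lemma \ref{lem1.9}) $\widetilde{G}$ is two complete-multipartite/$\overrightarrow{C_3}$ blocks glued at $v$, which is case (i) (the ``not a mixed star'' clause is forced by the no-pendant hypothesis); if exactly one $\alpha_i<0$, say $\alpha_2<0\leq\alpha_1$ with $\alpha_1\geq-\alpha_2$, then $p(\widetilde{G_2}+v)=2$ and we are in the $K(\cdots)$ shapes of (ii)--(iv); and $\alpha_1+\alpha_2<0$ (in particular both negative) is excluded.

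Finally I would make $\alpha_i$ explicit. The membership $b_i\in\mathrm{Col}\,H(\widetilde{G_i})$ already imposes rigidity: within one partite class of a complete multipartite block all rows of $H$ coincide, so any $b_i=H(\widetilde{G_i})x$ is constant on each class, forcing $v$ to join a class fully or not at all and with a single gain — this is exactly the labelling $a^{i},b^{-i},c^{1},d^{-1}$ and the ``join $\bigcup_{j=1}^{p}V_j$'' convention, after normalizing gains by switching at $v$ and possibly taking a converse (Lemma \ref{lem1.5}), which also reduces the admissible gain patterns to those in (iii) and (iv). It then remains to evaluate $\alpha_i=b_i^{*}H^{+}b_i$ on a (gain-)complete multipartite graph, whose spectrum and pseudoinverse are explicit, turning each $\alpha_i$ into a rational function of the class sizes $q_1,\ldots,q_r$, $n_1,\ldots,n_k$ and the counts $a,b,c,d,p$; substituting into $\alpha_1+\alpha_2\geq0$ produces precisely the reciprocal-sum inequalities such as $\tfrac1r+\tfrac1p+\tfrac1{k-p-1}\geq1$ and $\tfrac{as-1}{a+s}\leq\tfrac1{r-1}$, together with the small-parameter enumerations in (iv). The converse is the same computation run backwards, verifying $\alpha_1+\alpha_2\geq0$ for each listed family. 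I expect the main obstacle to be exactly this last step: organizing the case distinction over the gain patterns $(a^{i},b^{-i},c^{1},d^{-1})$ and carrying out the pseudoinverse/eigenvalue computations for partially joined complete multipartite graphs cleanly enough that the single inequality $\alpha_1+\alpha_2\geq0$ unpacks into the stated finite list without gaps or overlaps.
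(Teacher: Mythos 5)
There is a fundamental mismatch here: the statement you were asked to prove is Lemma \ref{lem1.9}, the characterization of mixed graphs with \emph{exactly one} positive eigenvalue ($p(\widetilde{G})=1$), a result of Yuan, Wang and Xu \cite{yuan} that the present paper imports without proof. What you have written is instead an outline of a proof of Theorem \ref{thm1.2}, the characterization of mixed graphs with a cut vertex and $p(\widetilde{G})=2$. The mismatch is not cosmetic, because your argument is circular with respect to the lemma: you invoke Lemma \ref{lem1.9} twice as a black box --- once to conclude that each component $\widetilde{G_j}$ of $\widetilde{G}-v$ with $p(\widetilde{G_j})=1$ is switching equivalent to a complete multipartite graph or to some $\overrightarrow{C_3}(t_1,t_2,t_3)$, and again to identify the blocks $\widetilde{G_i}+v$ in your case (i). Nothing in your text bears on either implication of the lemma itself, so as a proof of the stated result it cannot be repaired by tightening details; it addresses a different theorem and assumes the one in question.

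For the record, a genuine proof of Lemma \ref{lem1.9} has two halves, neither of which appears in your sketch. Sufficiency: a mixed graph switching equivalent to $K_{n_1,\ldots,n_k}$ shares its spectrum, and a complete multipartite graph has exactly one positive adjacency eigenvalue (Smith \cite{smith}); for $\overrightarrow{C_3}(t_1,t_2,t_3)$ one passes to the twin reduction $\overrightarrow{C_3}(1,1,1)$ (Lemmas \ref{twin} and \ref{add}), whose Hermitian adjacency matrix has rank $2$ (Lemma \ref{rank}) and zero trace, forcing inertia $(1,1,1)$ and hence $p=1$. Necessity is the substantial half: one must show, typically by eigenvalue interlacing against small forbidden induced subgraphs (e.g.\ $P_4$ and $K_2\cup K_2$, each with two positive eigenvalues) together with an analysis of non-positive mixed cycles via Lemmas \ref{lem1.5} and \ref{si} and twin reduction, that $p(\widetilde{G})=1$ forces the non-isolated vertices to induce either a positive complete multipartite mixed graph or the specific oriented complete tripartite family $\overrightarrow{C_3}(t_1,t_2,t_3)$. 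Incidentally, had your text been offered as a proof of Theorem \ref{thm1.2}, its Schur-complement localization at $v$ would run parallel to the paper's Lemma \ref{lem3.3} and Case 3 of Claim 3, but even there the passage from ``$p(\widetilde{G_2}+v)=2$'' to the $K(\cdots)$ shapes would still need the rigidity arguments of Lemma \ref{lem3.7} (Facts 1 and 2), which you gesture at but do not supply.
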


\section{\normalsize Proof for Theorem \ref{thm1.1}}
\noindent{\bf Proof of Theorem \ref{thm1.1}:}
The sufficiency can be verified by Lemma \ref{lemp}, and we only prove necessity.

\textit{``Necessity''.} If $\widetilde{G}$ is a mixed graph with pendant vertices, without loss of generality, let $v_1$ be a pendant vertex of $\widetilde{G}$ with neighbor $v_2$. Then by Lemma \ref{lemp}, we have $p(\widetilde{G})=p(\widetilde{G}-v_1-v_2)+1$. The condition $p(\widetilde{G})=2$ implies $p(\widetilde{G}-v_1-v_2)=1$. Thus, by Lemma \ref{lem1.9}, the subgraph, say $\widetilde{F}$, induced by all non-isolated vertices of $\widetilde{G}-v_1-v_2$ is switching equivalent either to a complete multipartite graph or to some $\overrightarrow{C_3}(t_1, t_2, t_3)$. Hence, $p(\widetilde{F})=1$. It is easy to see that $\widetilde{G}$ is obtained by adding some undirected edges (possibly empty) and directed edges (possibly empty) with arbitrary direction between the center of a mixed star and some vertices of $\widetilde{F}$.
{This completes the proof.}
\qed
\section{\normalsize Proof for Theorem \ref{thm1.2}}
\subsection{\normalsize Technical lemmas}
In this {subsection} we present a few technical lemmas aiming to provide some fundamental characterizations of mixed graphs with cut vertices and without pendant vertices having exactly two positive eigenvalues.

Two vertices $u, w\in V(\widetilde{G})$ are called \textit{twins} if $\widetilde{G}$ is switching equivalent to $\widetilde{G'}$ such that the rows of $H(\widetilde{G'})$ indexed by $u$ and $w$ are same. The relation of being twins is an equivalence relation on vertex set of $\widetilde{G}$. Denote by $[u]$ the equivalence class containing $u$. The \textit{twin reduction graph} of $\widetilde{G}$, written as $T_{\widetilde{G}}$, is a mixed graph whose vertices are the equivalence classes and $[u][w]\in E_0(T_{\widetilde{G}})$ if and only if $uw\in E_0(\widetilde{G})$ and $\overrightarrow{[u][w]}\in E_1(T_{\widetilde{G}})$ if and only if $\overrightarrow{uw}\in E_1(\widetilde{G})$.
\begin{lem}\cite{mo}\label{twin}
Let $\widetilde{G}$ and $\widetilde{G'}$ be mixed graphs with the same underlying graph. Then they are switching equivalent if and only if $T_{\widetilde{G}}$ and $T_{\widetilde{G'}}$ are switching equivalent.
\end{lem}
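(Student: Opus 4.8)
\textbf{Proof proposal for Lemma \ref{twin}.}

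The plan is to establish the two implications separately, using the defining property of switching equivalence at the level of the Hermitian adjacency matrices. Recall that $\widetilde{G}$ and $\widetilde{G'}$ are switching equivalent precisely when there is a diagonal matrix $Q$ with diagonal entries in $\mathbb{T}$ (viewing the mixed graph as a complex unit gain graph, switching by $\theta:V(G)\to\mathbb{T}$) such that $H(\widetilde{G'})=Q^{-1}H(\widetilde{G})Q$. The key observation I would isolate first is that switching is \emph{constant on twin classes} in a suitable sense: if $u$ and $w$ are twins, then after the switching that equalizes their rows they remain interchangeable, and any further switching that is constant on each twin class descends to a well-defined switching of the twin reduction graph. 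So the first step is to make precise the correspondence between a switching function $\theta$ on $V(\widetilde{G})$ that respects the twin partition and a switching function $\bar\theta$ on $V(T_{\widetilde{G}})$.

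For the forward direction, suppose $\widetilde{G}$ and $\widetilde{G'}$ are switching equivalent. First I would argue that the twin partitions of $\widetilde{G}$ and $\widetilde{G'}$ coincide: being twins is a switching-invariant property, since if $u,w$ have equal rows in some switching of $\widetilde{G}$, that same switched graph witnesses them as twins in $\widetilde{G'}$ as well (composing the two switchings). Hence $T_{\widetilde{G}}$ and $T_{\widetilde{G'}}$ have the same vertex set (the common twin classes) and the same underlying graph. Then I would take the gauge $\theta$ realizing the equivalence $\widetilde{G}\sim\widetilde{G'}$; the point is that within a single twin class the ratios of the $\theta$-values are forced (they must line up the already-equal rows), so $\theta$ differs from a class-constant function only by an adjustment that does not affect the between-class edges. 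From this I extract a class-constant $\theta$, which I then push down to a switching function $\bar\theta$ on $T_{\widetilde{G}}$, witnessing $T_{\widetilde{G}}\sim T_{\widetilde{G'}}$.

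For the converse, suppose $T_{\widetilde{G}}\sim T_{\widetilde{G'}}$ via some $\bar\theta$ on the twin classes. I would lift $\bar\theta$ to $\theta$ on $V(G)$ by setting $\theta$ constant on each class equal to the value $\bar\theta$ assigns to that class. Because the off-diagonal entries of $H(\widetilde{G})$ between two distinct classes $[u],[w]$ are all equal to the corresponding entry of $H(T_{\widetilde{G}})$ (that is exactly what being a twin class means, after the initial normalizing switch), the lifted switching reproduces between-class entries correctly; the within-class entries are $0$ (there are no edges inside a twin class, since twins are non-adjacent with equal rows) and are preserved trivially. This gives $H(\widetilde{G'})=Q^{-1}H(\widetilde{G})Q$ up to the initial normalizing switchings on each side, establishing $\widetilde{G}\sim\widetilde{G'}$.

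The main obstacle I anticipate is the bookkeeping in the forward direction: I must verify that the switching $\theta$ realizing $\widetilde{G}\sim\widetilde{G'}$ really can be taken (or modified) to be constant on each twin class without destroying the equivalence. The subtlety is that two twins in $\widetilde{G}$ need not have literally equal rows in $H(\widetilde{G})$ itself — only after an auxiliary switch — so I would first normalize both $\widetilde{G}$ and $\widetilde{G'}$ by switchings that make each twin class internally ``aligned,'' then show the remaining gauge between the two normalized graphs is forced to be class-constant. Carefully tracking how these three switchings compose, and confirming the class-constant reduction well-defines $\bar\theta$ on $T_{\widetilde{G}}$, is where the real care is needed; the rest is a routine translation between the matrix identity $H(\widetilde{G'})=Q^{-1}H(\widetilde{G})Q$ and the edge-wise gain description.
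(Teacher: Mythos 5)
The paper does not actually prove Lemma \ref{twin}: it is quoted from Mohar \cite{mo} without proof, so there is no in-paper argument to compare yours with, and I can only judge your proposal on its own terms. Your overall plan --- normalize both graphs so that every twin class is internally aligned, show that the residual gauge is constant on classes, and then pass to (or lift from) the twin reduction graph --- is sound, and the supporting facts you invoke (twins are non-adjacent, the twin partition is switching-invariant, between-class entries of an aligned graph are constant and equal to the corresponding quotient entry) are all true.

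Two steps, however, are asserted rather than proved, and a complete write-up must supply them. First, the normalization: the paper's definition only guarantees, for each \emph{pair} of twins, some switching aligning that pair, whereas you need one switching aligning all classes simultaneously. This follows from the observation that $u,w$ are twins if and only if the row of $w$ in $H(\widetilde{G})$ equals $\alpha$ times the row of $u$ for some $\alpha\in\{\pm1,\pm i\}$ (namely the ratio of the switching values at $w$ and $u$); fixing a representative $u_C$ of each class $C$ and setting $\theta(u)$ equal to the proportionality constant of $u$ relative to $u_C$ gives a single switching after which every row in class $C$ equals the row of $u_C$ rescaled, hence all classes are aligned at once. The same observation settles your ``forced ratios'' claim: if both graphs are aligned and $H(\widetilde{G'})=Q^{-1}H(\widetilde{G})Q$, then $\bigl(\theta(u)^{-1}-\theta(u')^{-1}\bigr)H(\widetilde{G})_{ux}=0$ for all $x$, so $\theta$ is class-constant except on classes of isolated vertices, where its values are irrelevant. (In fact, after alignment the forward direction needs none of this: $T_{\widetilde{G}}$ is just the induced subgraph on a transversal of the classes, and the relation $H(\widetilde{G'})=Q^{-1}H(\widetilde{G})Q$ restricts to every principal submatrix because $Q$ is diagonal.) Second, and more important for this paper: switching equivalence is defined here as a four-way switching \emph{together with possibly taking the converse}, so your matrix characterization silently drops the case $H(\widetilde{G'})=\overline{Q^{-1}H(\widetilde{G})Q}$, which your argument as written does not cover. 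It is handled the same way --- entrywise conjugation commutes with restriction to a transversal and with class-constant lifts --- but it has to be said. A minor further point: a FWS requires $Q_{vv}\in\{\pm1,\pm i\}$ rather than arbitrary entries of $\mathbb{T}$; on a connected graph one can rescale $Q$ at a root vertex to return its entries to $\{\pm1,\pm i\}$, but this reduction should be made explicit if you work over $\mathbb{T}$.
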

\begin{lem}\cite{34}\label{add}
Adding or removing twins preserves the $H$-rank of a mixed graph, and also preserves the positive and negative inertia index of a mixed graph.
\end{lem}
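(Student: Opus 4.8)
The plan is to reduce everything to a single elementary Hermitian congruence. First I would exploit the switching invariance of the spectrum together with Lemma~\ref{twin}: switching a mixed graph amounts to conjugating $H(\widetilde{G})$ by a diagonal unitary matrix with entries in $\mathbb{T}$, which is a unitary congruence and hence preserves the whole inertia triple by Lemma~\ref{lem1.4}. Consequently it suffices to prove the claim under the assumption that the rows of $H(\widetilde{G})$ indexed by the twins $u$ and $w$ are literally equal. Writing $H=H(\widetilde{G})=(h_{st})$, equality of rows $u$ and $w$ means $h_{us}=h_{ws}$ for every $s$; taking $s=u$ and using $h_{uu}=0$ gives $h_{wu}=0$, so $u$ and $w$ are non-adjacent, and Hermiticity then forces the columns indexed by $u$ and $w$ to coincide as well, since $h_{su}=\overline{h_{us}}=\overline{h_{ws}}=h_{sw}$.

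Next I would remove one twin, say $u$, by an explicit congruence. Let $P=I-E_{uw}$, where $E_{uw}$ is the matrix unit with a single $1$ in position $(u,w)$, and set $M=P\,H\,P^{*}=P\,H\,(I-E_{wu})$. Left multiplication by $P$ subtracts row $w$ from row $u$, which annihilates row $u$ because the two rows are equal; right multiplication by $P^{*}$ subtracts column $w$ from column $u$, which annihilates column $u$ because the two columns are equal, with the diagonal and cross terms vanishing since $u,w$ are non-adjacent. A direct verification then shows that $M$ has a zero row and a zero column in the $u$-slot, while every entry $M_{st}$ with $s,t\neq u$ equals $h_{st}$ (the two elementary factors touch only row $u$ and column $u$). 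Hence, after the harmless permutation congruence that moves the $u$-slot to the end, one obtains $M=H(\widetilde{G}-u)\oplus[0]$.

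Finally, since $M$ is congruent to $H$, Sylvester's law of inertia (Lemma~\ref{lem1.4}) yields $p(\widetilde{G})=p(\widetilde{G}-u)$ and $n(\widetilde{G})=n(\widetilde{G}-u)$, whence $rk(\widetilde{G})=rk(\widetilde{G}-u)$ as well, the single zero eigenvalue contributed by the $[0]$ block serving only to raise the nullity by one. This proves that removing a twin preserves the $H$-rank and both inertia indices, and adding a twin is exactly the inverse operation, obtained by reading the same congruence backwards. The argument is in essence routine linear algebra, so the only point that genuinely demands care is the bookkeeping in the middle step, namely checking that the Hermitian congruence $P\,H\,P^{*}$ simultaneously clears the entire $u$-th row and column while leaving the principal submatrix on $V(G)\setminus\{u\}$ exactly equal to $H(\widetilde{G}-u)$; this is precisely where non-adjacency of the twins is used.
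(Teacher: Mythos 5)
Your proof is correct. A point worth noting up front: the paper does not prove this lemma at all, but imports it from \cite{34}, so your argument serves as a self-contained replacement for a cited result rather than a parallel to an in-paper proof. The substance checks out. Switching (a FWS) is congruence by a diagonal unitary matrix, so by Lemma \ref{lem1.4} both $\widetilde{G}$ and $\widetilde{G}-u$ can be replaced by switched copies, which legitimately reduces to the case where the rows indexed by the twins $u,w$ are literally equal; your observation $h_{wu}=h_{uu}=0$ correctly forces non-adjacency of the twins, and Hermiticity transfers equality of rows to equality of columns. The elementary congruence $M=PHP^{*}$ with $P=I-E_{uw}$ indeed clears row $u$ (left factor subtracts the equal row $w$) and column $u$ (right factor subtracts the equal column $w$), while every entry off row/column $u$ is untouched, since $P$ alters only row $u$ and $P^{*}$ only column $u$; hence $M$ is permutation-congruent to $H(\widetilde{G}-u)\oplus[0]$, and Sylvester's law gives $p(\widetilde{G})=p(\widetilde{G}-u)$, $n(\widetilde{G})=n(\widetilde{G}-u)$, with rank preservation following because congruence preserves rank and the extra block is zero. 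Two cosmetic remarks: the appeal to Lemma \ref{twin} is unnecessary (all you actually use is that a FWS is a diagonal unitary congruence), and since the paper's notion of switching equivalence also permits taking the converse, you should add the one-line observation that $H\mapsto H^{\top}=\overline{H}$ preserves the spectrum of a Hermitian matrix (its eigenvalues are real, and conjugating an eigenvector equation exhibits the same eigenvalue), so the reduction to equal rows remains valid in that case as well.
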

\begin{lem}\label{rank}
$rk(\overrightarrow{C_3}(t_1, t_2, t_3))=2$.
\end{lem}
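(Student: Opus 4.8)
The plan is to show directly that the Hermitian adjacency matrix of $\overrightarrow{C_3}(t_1,t_2,t_3)$ has rank exactly $2$ by exploiting the twin structure built into the definition of this graph. Recall that $\overrightarrow{C_3}(t_1,t_2,t_3)$ is a mixed complete tripartite graph with parts $A,B,C$ of sizes $t_1,t_2,t_3$; within each part there are no edges, and every vertex of a fixed part connects to every vertex of another part with the same gain (the orientation pattern of the underlying odd triangle $\overrightarrow{C_3}(1,1,1)$). First I would observe that all $t_1$ vertices of $A$ are mutually twins, and likewise for $B$ and $C$: any two vertices in the same part have identical rows of $H$, since they see every other vertex with exactly the same entry and see each other with a $0$. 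Hence by Lemma \ref{add} (adding/removing twins preserves the $H$-rank),
\[
rk(\overrightarrow{C_3}(t_1,t_2,t_3)) = rk(\overrightarrow{C_3}(1,1,1)),
\]
and the twin reduction graph $T_{\overrightarrow{C_3}(t_1,t_2,t_3)}$ is precisely the odd triangle $\overrightarrow{C_3}(1,1,1)$.

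It therefore suffices to compute the rank of the $3\times 3$ Hermitian matrix of a single odd triangle. An odd triangle has an odd number of directed edges among its three edges, so up to taking the converse it has either one or three arcs. In either case I would write down the $3\times 3$ Hermitian matrix explicitly; for instance, the triangle with one arc $\overrightarrow{v_1v_2}$ and two undirected edges $v_2v_3$, $v_1v_3$ gives
\[
H=\begin{pmatrix} 0 & i & 1 \\ -i & 0 & 1 \\ 1 & 1 & 0 \end{pmatrix}.
\]
The key step is then to verify that this matrix is singular (so that $rk \le 2$) while not being the zero matrix and not having rank $1$. A quick way is to exhibit the value of the mixed triangle: by Lemma \ref{si}, a cycle $\widetilde{C_3}$ of odd order $n=3$ whose signature $\sigma$ is odd has nullity $\eta=1$, and an odd triangle is exactly the case of odd signature. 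Thus $\eta(\overrightarrow{C_3}(1,1,1))=1$, which forces $rk=3-1=2$.

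Alternatively, and perhaps more self-containedly, I would just compute $\det H=0$ directly (confirming $rk\le 2$) and point to a nonzero $2\times 2$ principal minor to rule out rank $0$ or $1$, giving $rk=2$. Because both candidate odd-triangle orientations are switching equivalent to one another up to converse, and switching and converse preserve the spectrum and hence the rank, checking a single representative suffices. I do not anticipate a genuine obstacle here: the only point requiring a little care is making sure the twin-reduction and switching-invariance arguments are invoked correctly so that the computation for general $(t_1,t_2,t_3)$ reduces cleanly to the single $3\times 3$ determinant, after which the conclusion $rk(\overrightarrow{C_3}(t_1,t_2,t_3))=2$ is immediate.
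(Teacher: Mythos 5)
Your proposal is correct and follows essentially the same route as the paper: reduce $\overrightarrow{C_3}(t_1,t_2,t_3)$ to the odd triangle $\overrightarrow{C_3}(1,1,1)$ via the twin structure and Lemma \ref{add}, then observe that the odd triangle has rank $2$. The paper leaves the final step as ``easy to see,'' whereas you justify it explicitly (via Lemma \ref{si} or a direct determinant computation), which is a harmless elaboration rather than a different approach.
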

\begin{proof}
It is easy to see that $T_{\overrightarrow{C_3}(t_1, t_2, t_3)}=\overrightarrow{C_3}(1, 1, 1)$ and $rk(\overrightarrow{C_3}(1, 1, 1))=2$. By Lemma \ref{add}, we have
$$rk(\overrightarrow{C_3}(t_1, t_2, t_3))=rk(\overrightarrow{C_3}(1, 1, 1))=2.$$
\end{proof}
\begin{lem}\cite{34}\label{ev}
Let $\widetilde{G}$ be a connected mixed graph. Then $rk(\widetilde{G})=3$ if and only if $T_{\widetilde{G}}$ is an even triangle.
\end{lem}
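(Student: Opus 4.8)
The plan is to reduce everything to the twin reduction graph and then to a finite local computation around a rank-$3$ triangle. By Lemma~\ref{add}, passing from $\widetilde{G}$ to its twin reduction graph $T:=T_{\widetilde{G}}$ preserves the $H$-rank, and $T$ is again connected and now twin-free; so it suffices to prove that a connected, twin-free mixed graph $T$ satisfies $rk(T)=3$ if and only if $T$ is an even triangle. For the sufficiency, an even triangle is a mixed cycle $\widetilde{C_3}$ with an even number of arcs, hence with even signature $\sigma$; since $n=3$ is odd, Lemma~\ref{si} gives $\eta(\widetilde{C_3})=0$, i.e.\ $rk=3$ (this is also consistent with $rk(\overrightarrow{C_3}(1,1,1))=2$ from Lemma~\ref{rank} for the odd case).

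For the necessity I would argue as follows. A Hermitian matrix of rank $3$ has a nonsingular $3\times 3$ principal submatrix (a standard fact for Hermitian matrices), so $T$ has three vertices $a,b,c$ inducing a rank-$3$ mixed graph; inspecting the mixed graphs on three vertices (those with at most two edges have rank $\le 2$, and by Lemma~\ref{si} a triangle has rank $3$ exactly when it is even), this induced subgraph must be an even triangle $\Delta$. Because the $3\times 3$ block on $\{a,b,c\}$ is invertible, the rows of $H(T)$ indexed by $a,b,c$ are linearly independent and, as $rk(T)=3$, they form a basis of the row space; thus the row of any other vertex is a unique linear combination of them, with coefficients determined by its entries in the columns $a,b,c$.

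The remaining task is to rule out a fourth vertex. First, every vertex $d\notin\{a,b,c\}$ must be adjacent to $\Delta$: otherwise its entries in columns $a,b,c$ vanish, which forces the whole row of $d$ to be zero (its coefficients in the basis are $0$), i.e.\ $d$ isolated, contradicting connectivity. Next, for such a $d$ the induced subgraph on $\{a,b,c,d\}$ has rank exactly $3$ (it contains $\Delta$, and by the interlacing in Lemma~\ref{leml} deleting a vertex cannot increase the rank), so writing $B$ for the invertible $3\times3$ matrix of $\Delta$ and $x$ for the vector of edge gains from $d$ to $a,b,c$, the vanishing of the Schur complement gives $x^{*}B^{-1}x=0$. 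A short finite computation over the gains in $\{0,1,i,-i\}$, split according to the number $k\in\{1,2,3\}$ of edges joining $d$ to $\Delta$, shows that $k=1$ and $k=3$ are impossible, while $k=2$ forces the two gains to agree up to a unit scalar, so that the row of $d$ is a unit multiple of the row of the third triangle vertex; that is, $d$ is a twin, contradicting twin-freeness. Hence $|V(T)|=3$ and $T=\Delta$ is an even triangle.

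The main obstacle is precisely this fourth-vertex elimination: one must run the condition $x^{*}B^{-1}x=0$ uniformly over the two switching types of even triangle (value $+1$, where conveniently $B^{-1}=\tfrac12 J-I$, and value $-1$) and over the choice of the pair of triangle vertices to which $d$ attaches, and then recognize the resulting proportionality of rows as the twin relation in the sense of the definition. Everything else — the reduction via Lemma~\ref{add}, the extraction of an induced even triangle, and the monotonicity of rank under vertex deletion from Lemma~\ref{leml} — is routine given the lemmas already available.
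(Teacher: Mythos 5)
This lemma is not proved in the paper at all: it is imported verbatim from \cite{34}, so there is no in-paper argument to compare yours against, and I can only assess your proposal on its own merits. On those merits it is correct. The reduction to the twin reduction graph via Lemma~\ref{add}, the extraction of a nonsingular $3\times 3$ principal submatrix (a legitimate fact for Hermitian matrices), its identification as an even triangle via Lemma~\ref{si} and inspection of three-vertex mixed graphs, the observation that the three corresponding rows form a basis of the row space whose coefficients are read off from the three columns (so that entries in those columns determine the whole row), and the Schur-complement criterion $x^{*}B^{-1}x=0$ obtained from Lemma~\ref{leml} are all sound. The finite check you defer does go through in both switching classes of even triangle: the diagonal entries of $B^{-1}$ equal $-1/\det B=\mp\tfrac12\neq0$, killing one attachment edge; when all three gains are units, $2x^{*}B^{-1}x$ is an odd integer (for the positive triangle it equals $|x_1+x_2+x_3|^{2}-6$, and $|x_1+x_2+x_3|^{2}$ is odd), killing three edges; and with two edges one gets, e.g.\ for the positive triangle, $|x_1+x_2|^{2}=4$, i.e.\ $x_1=x_2$, so the row of $d$ is a unit multiple of the row of the third triangle vertex, which after a diagonal switching with entries in $\{\pm1,\pm i\}$ is exactly the twin relation in the paper's sense.

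Two small patches you should make. First, you assert without justification that $T_{\widetilde{G}}$ is twin-free; this is the expected property of the construction but deserves a line: if two distinct classes $[u]\neq[w]$ were twins in $T_{\widetilde{G}}$, lifting the relevant switching to $\widetilde{G}$ by scalars constant on each class would make the rows of $u$ and $w$ in $H(\widetilde{G})$ equal, so $u$ and $w$ would already be twins in $\widetilde{G}$, a contradiction. Second, once you normalize the even triangle by switching, the gains joining $d$ to it lie in $\{0,\pm1,\pm i\}$, not $\{0,1,i,-i\}$ as written; this is harmless because the parity and Schur-complement computations above are valid for all fourth roots of unity, but your case analysis should be stated over that larger set.
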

\begin{lem}
Let $\widetilde{G}$ be a positive complete $k$-partite mixed graph with one partition class containing exactly one vertex. If $k\geq3$ and $v\in V(\widetilde{G})$ is in one partition class containing exactly one vertex, then $rk(\widetilde{G})=rk(\widetilde{G}-v)+1$.
\end{lem}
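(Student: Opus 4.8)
The plan is to strip away the mixed/complex structure and reduce to a complete graph via twin reduction. First, since $\widetilde{G}$ is positive, Lemma \ref{lem1.5} gives that $\widetilde{G}$ is switching equivalent to its underlying graph $K_{n_1,\ldots,n_k}$; because a four-way switching is a similarity transformation by a unitary diagonal matrix it preserves the $H$-spectrum, and hence the $H$-rank. Thus I would record $rk(\widetilde{G})=rk(K_{n_1,\ldots,n_k})$. Let $V_j=\{v\}$ be the singleton partition class containing $v$. The induced subgraph $\widetilde{G}-v$ is again positive, since every induced mixed cycle of $\widetilde{G}-v$ is an induced mixed cycle of $\widetilde{G}$ and so is positive by Lemma \ref{lem1.5}; therefore $\widetilde{G}-v$ is switching equivalent to its underlying graph, namely the complete $(k-1)$-partite graph $K_{n_1,\ldots,\widehat{n_j},\ldots,n_k}$ on the remaining $k-1$ classes, whence $rk(\widetilde{G}-v)=rk(K_{n_1,\ldots,\widehat{n_j},\ldots,n_k})$.

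Next I would invoke twin reduction. In a complete multipartite graph, the vertices lying in a common partition class are pairwise twins, while vertices in distinct classes are not (their rows differ on the diagonal/off-diagonal entry between them). Hence the twin reduction graph of $K_{n_1,\ldots,n_k}$ is $K_k$ and that of $K_{n_1,\ldots,\widehat{n_j},\ldots,n_k}$ is $K_{k-1}$. By Lemma \ref{add}, removing twins preserves the $H$-rank, so $rk(K_{n_1,\ldots,n_k})=rk(K_k)$ and $rk(K_{n_1,\ldots,\widehat{n_j},\ldots,n_k})=rk(K_{k-1})$.

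It then remains to compute $rk(K_m)$. The adjacency matrix of $K_m$ is $J_m-I_m$, whose eigenvalues are $m-1$ (simple) and $-1$ (with multiplicity $m-1$); none of these vanish once $m\geq2$, so $rk(K_m)=m$. Since $k\geq3$ we have $k-1\geq2$, and therefore $rk(\widetilde{G})=rk(K_k)=k$ while $rk(\widetilde{G}-v)=rk(K_{k-1})=k-1$, which yields $rk(\widetilde{G})=rk(\widetilde{G}-v)+1$, as claimed.

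The computation itself is routine; the points that actually carry the weight are the two hypotheses. The singleton requirement on $v$ is exactly what prevents $v$ from having a twin: if $v$ shared its class with another vertex, Lemma \ref{add} would force $rk(\widetilde{G})=rk(\widetilde{G}-v)$ and the asserted drop by one would fail. Likewise the hypothesis $k\geq3$ is what guarantees that $\widetilde{G}-v$ retains at least two partition classes, so that the full-rank formula $rk(K_{k-1})=k-1$ applies; indeed for $k=2$ the statement is false, since there $\widetilde{G}-v$ is edgeless with rank $0$ rather than $rk(\widetilde{G})-1=1$. Making these two dependencies explicit is the only subtlety I expect to need care with.
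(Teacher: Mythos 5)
Your proof is correct and follows essentially the same route as the paper's: use Lemma \ref{lem1.5} to pass to the underlying complete multipartite graph, observe that vertices in a common partition class are twins, reduce to $K_k$ (resp.\ $K_{k-1}$) via Lemma \ref{add}, and conclude from $rk(K_m)=m$. Your added details—that $\widetilde{G}-v$ is itself positive, and the explicit role of the singleton-class and $k\geq3$ hypotheses—are points the paper leaves implicit, but the argument is the same.
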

\begin{proof}
By Lemma \ref{lem1.5}, we have $\widetilde{G}$ is switching equivalent to its underlying graph. Then the vertices in the same partition class of $\widetilde{G}$ are twins. Thus, by Lemma \ref{twin}, we have $T_{\widetilde{G}}$ is switching equivalent to $K_k$. Using Lemma \ref{add}, we have $rk(\widetilde{G})=rk(T_{\widetilde{G}})=rk(K_k)=k$. Similarly, we have $rk(\widetilde{G}-v)=rk(K_{k-1})=k-1$. Hence, $rk(\widetilde{G})=rk(\widetilde{G}-v)+1$.
\end{proof}
\begin{lem}\label{lem3.3}
Let $\widetilde{M_1}$ be a mixed graph with a vertex $v'$ and $\widetilde{M_2}$ be another mixed graph with a vertex $v''$, and let $\widetilde{G}=\widetilde{M_1}\bullet v\bullet\widetilde{M_2}$. Then $p(\widetilde{M_1}-v')+p(\widetilde{M_2}-v'')\leq p(\widetilde{G})\leq p(\widetilde{M_1})+p(\widetilde{M_2})$.
\end{lem}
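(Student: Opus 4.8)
The plan is to convert the vertex identification into a relation between Hermitian matrices and then read off the two inequalities separately. Write the vertices of $\widetilde{M_1}$ as $v', a_1,\dots,a_s$ and those of $\widetilde{M_2}$ as $v'', b_1,\dots,b_t$, and order the vertices of $\widetilde{G}$ as $v, a_1,\dots,a_s, b_1,\dots,b_t$. Since $v$ is the only shared vertex, no $a_i$ is adjacent to any $b_j$, so with $A=H(\widetilde{M_1}-v')$ and $B=H(\widetilde{M_2}-v'')$ we have
\[
H(\widetilde{G})=\begin{pmatrix} 0 & \alpha^* & \beta^* \\ \alpha & A & 0 \\ \beta & 0 & B \end{pmatrix},\qquad H(\widetilde{M_1})=\begin{pmatrix} 0 & \alpha^* \\ \alpha & A \end{pmatrix},\qquad H(\widetilde{M_2})=\begin{pmatrix} 0 & \beta^* \\ \beta & B \end{pmatrix},
\]
where $\alpha$ (resp. $\beta$) records the edges at $v$ inherited from $\widetilde{M_1}$ (resp. $\widetilde{M_2}$). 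In particular $p(\widetilde{M_1}-v')=p(A)$ and $p(\widetilde{M_2}-v'')=p(B)$.

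For the lower bound I would simply delete the cut vertex. Removing $v$ disconnects $\widetilde{G}$ into $(\widetilde{M_1}-v')\sqcup(\widetilde{M_2}-v'')$, whose Hermitian matrix is $A\oplus B$; hence $p(\widetilde{G}-v)=p(A)+p(B)=p(\widetilde{M_1}-v')+p(\widetilde{M_2}-v'')$. Applying Lemma \ref{leml} with $u=v$ gives $p(\widetilde{G}-v)\le p(\widetilde{G})$, which is precisely the left-hand inequality.

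For the upper bound I would realize $H(\widetilde{G})$ as a compression of the block-diagonal matrix $M=H(\widetilde{M_1})\oplus H(\widetilde{M_2})$, for which $p(M)=p(\widetilde{M_1})+p(\widetilde{M_2})$. Let $\iota$ be the injective linear map that sends the basis vector $e_v$ to $e_{v'}+e_{v''}$ and fixes every $e_{a_i}$ and $e_{b_j}$. A short block computation, using that the $v'$-block and $v''$-block of $M$ are orthogonal and have zero diagonal, yields the identity $\iota^*M\iota=H(\widetilde{G})$. Now I invoke the variational description of the positive inertia index: $p(N)$ equals the maximum dimension of a subspace on which the form $x\mapsto x^*Nx$ is positive definite (a standard consequence of the spectral theorem, in the same circle of ideas as Lemma \ref{lem1.4}). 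If $W$ realizes this maximum for $H(\widetilde{G})$, then for each nonzero $y=\iota x\in\iota(W)$ we get $y^*My=x^*\iota^*M\iota\,x=x^*H(\widetilde{G})x>0$, and $\dim\iota(W)=\dim W$ because $\iota$ is injective; hence $p(M)\ge\dim W=p(\widetilde{G})$, the right-hand inequality.

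The technical heart is the routine block verification $\iota^*M\iota=H(\widetilde{G})$, and the only point needing care is that the compression inequality $p(\iota^*M\iota)\le p(M)$ uses nothing beyond injectivity of $\iota$: the map need not be an isometry, since any congruence by an invertible matrix preserves the positive inertia index by Lemma \ref{lem1.4}, and the variational argument handles the non-square case directly. I expect no genuine obstacle here; the whole lemma is essentially a packaging of vertex deletion (for the lower bound) together with subspace compression of a direct sum (for the upper bound).
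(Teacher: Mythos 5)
Your proof is correct, and it agrees with the paper's for the lower bound: both delete the cut vertex and apply Lemma \ref{leml} to $\widetilde{G}-v=(\widetilde{M_1}-v')\cup(\widetilde{M_2}-v'')$. For the upper bound, however, you take a genuinely different route. The paper argues by cases: if $p(\widetilde{M_1})=p(\widetilde{M_1}-v')+1$ or $p(\widetilde{M_2})=p(\widetilde{M_2}-v'')+1$, the bound already follows from the deletion inequality; in the remaining case it solves the linear systems $H(\widetilde{M_1}-v')X=\alpha$ and $H(\widetilde{M_2}-v'')Y=\beta$ (solvable precisely because the ranks do not jump by $2$), performs an explicit congruence $S^*H(\widetilde{G})S$ that block-diagonalizes $H(\widetilde{G})$, and invokes Sylvester's law (Lemma \ref{lem1.4}) to get the exact equality $p(\widetilde{G})=p(\widetilde{M_1})+p(\widetilde{M_2})$ in that case. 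You instead exhibit $H(\widetilde{G})=\iota^*\bigl(H(\widetilde{M_1})\oplus H(\widetilde{M_2})\bigr)\iota$ with $\iota$ the injective map sending $e_v\mapsto e_{v'}+e_{v''}$ and fixing the other basis vectors --- an identity that holds because the diagonal entries at $v'$ and $v''$ are zero and the two blocks are mutually orthogonal --- and then push a maximal positive-definite subspace for $H(\widetilde{G})$ through $\iota$ to conclude $p(\widetilde{G})\leq p\bigl(H(\widetilde{M_1})\oplus H(\widetilde{M_2})\bigr)=p(\widetilde{M_1})+p(\widetilde{M_2})$. Your compression argument is uniform (no case split), requires no solvability claim, and rests only on the standard variational characterization of the positive inertia index, which follows from the spectral theorem: the span of eigenvectors with positive eigenvalues realizes the maximum, and any subspace of larger dimension must meet the span of the remaining eigenvectors. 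What the paper's longer argument buys in exchange is the sharper by-product that in its hard case equality $p(\widetilde{G})=p(\widetilde{M_1})+p(\widetilde{M_2})$ actually holds --- information the lemma itself does not require and your argument does not produce. Both proofs are complete; the only step you should spell out when writing yours up is the proof of the variational characterization, since the paper never states it explicitly.
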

\begin{proof}
By Lemma \ref{leml}, we obtain
\begin{align*}
p(\widetilde{M_1}-v')\leq p(\widetilde{M_1})\leq p(\widetilde{M_1}-v')+1.
\end{align*}
Similarly,
\begin{align*}
p(\widetilde{M_2}-v'')\leq p(\widetilde{M_2})\leq p(\widetilde{M_2}-v'')+1.
\end{align*}
Lemma \ref{leml} applying to the induced subgraph $\widetilde{G}-v=(\widetilde{M_1}-v')\bigcup(\widetilde{M_2}-v'')$ of $\widetilde{G}$ yields that
\begin{align*}
p(\widetilde{M_1}-v')+p(\widetilde{M_2}-v'')\leq p(\widetilde{G})\leq p(\widetilde{M_1}-v')+p(\widetilde{M_2}-v'')+1.
\end{align*}
If $p(\widetilde{M_1})=p(\widetilde{M_1}-v')+1$, then
\begin{align*}
p(\widetilde{G})\leq p(\widetilde{M_1}-v')+p(\widetilde{M_2}-v'')+1\leq p(\widetilde{M_1})+p(\widetilde{M_2}).
\end{align*}
If $p(\widetilde{M_2})=p(\widetilde{M_2}-v'')+1$, we also have
\begin{align*}
p(\widetilde{G})\leq p(\widetilde{M_1}-v')+p(\widetilde{M_2}-v'')+1\leq p(\widetilde{M_1})+p(\widetilde{M_2}).
\end{align*}
Suppose now $p(\widetilde{M_1})=p(\widetilde{M_1}-v')$ and $p(\widetilde{M_2})=p(\widetilde{M_2}-v'')$ and consider the Hermitian adjacency matrix $H(\widetilde{G})$ of $\widetilde{G}$:
\begin{align*}
H(\widetilde{G})={\left(
        \begin{array}{ccc}
        H(\widetilde{M_1}-v') & \alpha & 0 \\[3pt]
        \alpha^{*} & 0 & \beta^{*} \\[3pt]
        0 & \beta & H(\widetilde{M_2}-v'')
        \end{array}
        \right)}.
\end{align*}
The condition $p(\widetilde{M_1})=p(\widetilde{M_1}-v')$ implies that $rk(\widetilde{M_1})\leq rk(\widetilde{M_1}-v')+1$, thus the equation $H(\widetilde{M_1}-v')X=\alpha$ has a solution, say $X_0$. Similarly, suppose $Y_0$ is a solution of the equation $H(\widetilde{M_2}-v'')Y=\beta$ and let
\begin{align*}\label{eq1.2}
S={\left(
        \begin{array}{ccc}
        I & -X_0 & 0 \\[3pt]
        0 & 1 & 0 \\[3pt]
        0 & -Y_0 & I
        \end{array}
        \right)}.
\end{align*}
Then
\begin{align*}
S^*H(\widetilde{G})S={\left(
        \begin{array}{ccc}
        H(\widetilde{M_1}-v') & 0 & 0 \\[3pt]
        0 & -\alpha^*X_0-\beta^*Y_0 & 0 \\[3pt]
        0 & 0 & H(\widetilde{M_2}-v'')
        \end{array}
        \right)}.
\end{align*}
From $p(\widetilde{M_1})=p(\widetilde{M_1}-v')$ and $p(\widetilde{M_2})=p(\widetilde{M_2}-v'')$ it follows that $-\alpha^*X_0\leq0$ and $-\beta^*Y_0\leq0$. Thus by Lemma \ref{lem1.4},
$p(\widetilde{G})=p(S^*H(\widetilde{G})S)=p(\widetilde{M_1}-v')+p(\widetilde{M_2}-v'')=p(\widetilde{M_1})+p(\widetilde{M_2})$.
This completes the proof.
\end{proof}
\begin{lem}\label{lem3.2}
Let $\widetilde{K}_{m_1, m_2,\ldots, m_x}$ and $\widetilde{K}_{m'_1, m'_2,\ldots, m'_y}$ be mixed complete multipartite graphs without pendant vertices. Let $v'$ (resp. $v''$) be a vertex of $\widetilde{K}_{m_1, m_2,\ldots, m_x}$ (resp. $\widetilde{K}_{m'_1, m'_2,\ldots, m'_y}$). If $p(\widetilde{K}_{m_1, m_2,\ldots, m_x})=1$ and $p(\widetilde{K}_{m'_1, m'_2,\ldots, m'_y})=1$, then $p(\widetilde{K}_{m_1, m_2,\ldots, m_x}\bullet v\bullet\widetilde{K}_{m'_1, m'_2,\ldots, m'_y})=2$.
\end{lem}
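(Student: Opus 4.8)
The plan is to sandwich $p(\widetilde{G})$ between matching lower and upper bounds, both supplied by the immediately preceding Lemma \ref{lem3.3}. Write $\widetilde{M_1}=\widetilde{K}_{m_1,\ldots,m_x}$, $\widetilde{M_2}=\widetilde{K}_{m'_1,\ldots,m'_y}$ and $\widetilde{G}=\widetilde{M_1}\bullet v\bullet\widetilde{M_2}$. Lemma \ref{lem3.3} gives
$$p(\widetilde{M_1}-v')+p(\widetilde{M_2}-v'')\le p(\widetilde{G})\le p(\widetilde{M_1})+p(\widetilde{M_2}).$$
Since $p(\widetilde{M_1})=p(\widetilde{M_2})=1$ by hypothesis, the upper bound already yields $p(\widetilde{G})\le 2$. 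Everything therefore reduces to proving $p(\widetilde{M_1}-v')=p(\widetilde{M_2}-v'')=1$, which will force the lower bound to equal $2$ as well.

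To evaluate $p(\widetilde{M_1}-v')$ I would establish two inequalities. For the upper estimate, Lemma \ref{leml} gives $p(\widetilde{M_1}-v')\le p(\widetilde{M_1})=1$ directly. For the lower estimate, the key observation is that $\widetilde{M_1}-v'$ still contains at least one edge: the only way that deleting a single vertex from a complete multipartite graph can destroy every edge is when that graph is a star and $v'$ is its center, but a star has pendant vertices, which is excluded by hypothesis. Since the Hermitian adjacency matrix of any mixed graph has zero diagonal, hence zero trace, a matrix that is nonzero (the graph having an edge) cannot be negative semidefinite, so it must possess at least one positive eigenvalue; thus $p(\widetilde{M_1}-v')\ge 1$. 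Combining the two estimates gives $p(\widetilde{M_1}-v')=1$, and the identical argument applies to $\widetilde{M_2}-v''$.

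With $p(\widetilde{M_1}-v')=p(\widetilde{M_2}-v'')=1$ in hand, the lower bound in Lemma \ref{lem3.3} reads $p(\widetilde{G})\ge 2$, and together with the upper bound $p(\widetilde{G})\le 2$ this forces $p(\widetilde{G})=2$, completing the argument.

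The main obstacle---indeed the only place where the hypotheses are used beyond the bound $p=1$---is the lower estimate $p(\widetilde{M_1}-v')\ge 1$, which rests on the absence of pendant vertices. I would make the structural claim precise as follows: a mixed complete multipartite graph $\widetilde{K}_{m_1,\ldots,m_x}$ with $p=1$ has at least two nonempty parts, and the absence of pendant vertices forces either $x\ge 3$, or $x=2$ with both parts of size at least $2$; in each of these cases one checks directly that deleting any single vertex still leaves two vertices in distinct parts, hence an edge. This is exactly the point where excluding stars matters: were $\widetilde{M_1}$ a mixed star and $v'$ its center, then $\widetilde{M_1}-v'$ would be edgeless, giving $p(\widetilde{M_1}-v')=0$ and collapsing the lower bound.
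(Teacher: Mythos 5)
Your proposal is correct and follows essentially the same route as the paper: both arguments observe that the absence of pendant vertices rules out stars, so each $\widetilde{M_j}$ minus its identified vertex retains an edge and hence has positive inertia index exactly $1$ (via Lemma \ref{leml}), and then both apply the sandwich of Lemma \ref{lem3.3} to force $p(\widetilde{G})=2$. The only difference is that you spell out details the paper leaves implicit, namely the zero-trace argument for why an edge forces $p\geq 1$ and the structural case analysis behind the no-star claim.
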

\begin{proof}
Since $\widetilde{K}_{m_1, m_2,\ldots, m_x}$ and $\widetilde{K}_{m'_1, m'_2,\ldots, m'_y}$ are mixed complete multipartite graphs without pendant vertices, we have $K_{m_1, m_2,\ldots, m_x}$ and $K_{m'_1, m'_2,\ldots, m'_y}$ are not mixed stars. Thus, $\widetilde{K}_{m_1, m_2,\ldots, m_x}-v'$ (resp. $\widetilde{K}_{m'_1, m'_2,\ldots, m'_y}-v''$) has at least one edge. It follows that $p(\widetilde{K}_{m_1, m_2,\ldots, m_x}-v')\geq1$ and $p(\widetilde{K}_{m'_1, m'_2,\ldots, m'_y}-v'')\geq1$. By Lemma \ref{leml}, we have $p(\widetilde{K}_{m_1, m_2,\ldots, m_x}-v')=1$ and $p(\widetilde{K}_{m'_1, m'_2,\ldots, m'_y}-v'')=1$. In view of Lemma \ref{lem3.3}, we have
\begin{align*}
2=&p(\widetilde{K}_{m_1, m_2,\ldots, m_x}-v')+p(\widetilde{K}_{m'_1, m'_2,\ldots, m'_y}-v'')\\
\leq&p(\widetilde{K}_{m_1, m_2,\ldots, m_x}\bullet v\bullet\widetilde{K}_{m'_1, m'_2,\ldots, m'_y})\\
\leq&p(\widetilde{K}_{m_1, m_2,\ldots, m_x})+p(\widetilde{K}_{m'_1, m'_2,\ldots, m'_y})\\
=&2.
\end{align*}
This completes the proof.
\end{proof}
Let ${\bf J}_{n\times m}$ and ${\bf 0}_{n\times m}$ be respectively the all-one and the all-zero $n\times m$ matrices. Let ${\bf 1}_{n}={\bf J}_{n\times 1}$. We often simply write ${\bf J}, {\bf 0}$ and ${\bf 1}$, respectively, if the order of these matrices is clear from the context.
\begin{lem}\label{3.8}
For $r\geq2, k\geq2, a\geq b, a\geq1$, and $s=k-a-b$, the positive inertia index of $\widetilde{G}=K(r; k; a^i, b^{-i}, 0^{1}, 0^{-1}, s)$ is 2 if and only if one of the following conditions holds:
\begin{wst}
  \item[{\rm (1)}] $a=1, b=0$;
  \item[{\rm (2)}] $a=b=1$ and $r=2$;
  \item[{\rm (3)}] $a\geq2, b=0$ and $s\leq1$;
  \item[{\rm (4)}] $a\geq2, b=0, s\geq2$ and $\frac{1}{r}+\frac{1}{a}+\frac{1}{s-1}\geq1$.
\end{wst}
\end{lem}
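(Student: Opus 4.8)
The plan is to compute $p(\widetilde{G})$ by reducing $H(\widetilde{G})$, via inertia-preserving congruences (Lemma~\ref{lem1.4}), to a small matrix whose inertia is transparent. Partition the vertices into the groups $W$ (the $r$ vertices of the $K_r$-part), the cut vertex $v$, and $U_{i},U_{-i},U_{0}$ (the vertices of the $K_k$-part joined to $v$ with gain $i$, with gain $-i$, and not joined to $v$, of sizes $a,b,s$ respectively). Writing $H(\widetilde{G})$ in these blocks, I will first observe that this partition behaves like an equitable partition for the Hermitian form: for each group $S$ the within-group block is $J_{|S|}-I_{|S|}$ and every vertex of $S$ sees the outside uniformly, so every vector supported on a single group and summing to zero is an eigenvector for the eigenvalue $-1$. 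Consequently $H(\widetilde{G})$ is congruent to $M\oplus(-I)$, where $M$ is the (at most) $5\times5$ Hermitian quotient matrix of the form on the span of the group-indicator vectors together with $e_v$; thus $p(\widetilde{G})=p(M)$. Since $W$ is joined only to $v$, eliminating its coordinate uses the positive pivot $r(r-1)$ (so $p(\widetilde{G})\ge2$ always) and replaces the $v$-diagonal $0$ by $-\tfrac{r}{r-1}$; the whole problem becomes deciding when the remaining part forces a third positive square, and the analysis now splits according to $b$.

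When $b=0$ there is a single imaginary entry, which the diagonal unitary congruence $\mathrm{diag}(1,1,-i,-i)$ turns real. Eliminating the $v$-coordinate (a negative pivot) then leaves the $2\times2$ matrix $\left(\begin{smallmatrix} a(2ar-a-r)/r & as\\ as & s(s-1)\end{smallmatrix}\right)$, whose $(1,1)$ entry is positive for $a\ge1,\,r\ge2$. Hence here $p(\widetilde{G})=2$ iff this matrix is not positive definite, i.e. iff its determinant is $\le0$. A direct computation gives $\det=\tfrac{as}{r}\bigl(ars-as-rs-2ar+a+r\bigr)$, and I will check the elementary equivalences: the determinant is automatically $\le0$ when $s\le1$ or $a=1$, while for $a\ge2,\,s\ge2$ the sign condition $ars-as-rs-2ar+a+r\le0$ is, after clearing denominators, exactly $\tfrac1r+\tfrac1a+\tfrac1{s-1}\ge1$. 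This reproduces conditions (1), (3) and (4).

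When $b\ge1$ the two imaginary couplings cannot be removed simultaneously, so I will instead diagonalize the $K_k$-block. The $3\times3$ block on $U_i,U_{-i},U_0$ equals $DJ_3D-D$ with $D=\mathrm{diag}(a,b,s)$; the congruence by $D^{-1/2}$ turns it into $ww^{*}-I_3$ with $w=(\sqrt a,\sqrt b,\sqrt s)^{\top}$, which a unitary rotation diagonalizes as $\mathrm{diag}(k-1,-1,-1)$. The coupling vector from $v$ meets the two $(-1)$-directions in only a one-dimensional subspace, so one $(-1)$ decouples entirely (contributing only to the negative index) and the problem reduces to the $3\times3$ matrix $P$ on $\{v,\hat w,f_1\}$ with diagonal $\bigl(-\tfrac{r}{r-1},\,k-1,\,-1\bigr)$ and off-diagonal couplings $\tfrac{i(a-b)}{\sqrt k}$ (between $v$ and $\hat w$) and $\mu=\bigl((a+b)-\tfrac{(a-b)^2}{k}\bigr)^{1/2}$ (between $v$ and $f_1$); the degenerate case $s=0$ simply lands on $P$ directly. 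Eliminating the last coordinate (pivot $-1$) reduces $P$ to a $2\times2$ matrix whose $(2,2)$ entry is $k-1>0$, so $p(\widetilde{G})=2$ iff $\det P\ge0$, and evaluating the determinant yields $\det P=(a-b)^2-(a+b)(k-1)+\tfrac{r(k-1)}{r-1}$.

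The crux is then the arithmetic of this last inequality. Using $\tfrac{r}{r-1}\le2$ (equality iff $r=2$) and $k-1\ge a+b-1$ (equality iff $s=0$), I will show $\det P\ge0$ forces $(a-b)^2\ge(a+b-2)(a+b-1)$; writing $a=b+t$ with $t=a-b\le a+b-2$ (as $b\ge1$) squeezes this to $a+b=2$, i.e. $a=b=1$, whereupon $\det P\ge0$ collapses to $\tfrac{r}{r-1}\ge2$, i.e. $r=2$. Conversely, $a=b=1,\,r=2$ makes $\det P=0$, so this case gives exactly condition (2), completing the characterization. I expect the main obstacle to be organizing the $b\ge1$ reduction cleanly—pinning down the single surviving $3\times3$ and then $2\times2$ matrix, including the degenerate $s=0$ case—and carrying out the final squeezing argument that isolates $a=b=1,\,r=2$; the $b=0$ branch is routine once its determinant is identified with the harmonic-sum inequality.
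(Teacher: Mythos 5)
Your proposal is correct, and it takes a genuinely different route from the paper's proof. The paper works directly with the full matrix $H(\widetilde{G})$ and reduces it by large explicit congruence matrices (its $C$, $C_1,\ldots,C_5$, built from blocks such as ${\bf J}D$ and $I+U_{k-a}D$ with $U_n$ strictly upper triangular), organizing the analysis into the cases $a=1,b=0$; $a=b=1$ (with separate subcases $s=0$, $s=1$, $s\geq2$); and $a\geq2$, where everything is funneled into the sign of a single scalar $\rho$, i.e.\ the inequality $\phi\leq\frac{r}{r-1}+\frac{a}{a-1}$, followed by subcases $b\geq2$, $b=1$, $b=0$. You instead compress $H(\widetilde{G})$ once and for all to a $5\times5$ quotient via the equitable-partition observation that zero-sum vectors on each group are $(-1)$-eigenvectors (note this is \emph{not} the paper's twin reduction of Lemma \ref{add}, which does not apply inside the clique $K_k$ since adjacent vertices are not twins), split only on $b=0$ versus $b\geq1$, and in the latter case diagonalize the rank-one structure $ww^{*}-I$ to reach the $3\times3$ matrix $P$ and the criterion $\det P\geq0$, which your squeeze argument collapses to $a=b=1$, $r=2$. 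I checked your key identities and they are all right: the $2\times2$ Schur complement in the $b=0$ branch, its determinant $\frac{as}{r}(ars-as-rs-2ar+a+r)$ and the equivalence with $\frac1r+\frac1a+\frac1{s-1}\geq1$ when $a,s\geq2$; and $\det P=(a-b)^2-(a+b)(k-1)+\frac{r(k-1)}{r-1}$ in the $b\geq1$ branch, where the degenerate boundary $\det P=0$ is handled correctly because the surviving $2\times2$ block has positive diagonal entry $k-1$, so a zero determinant cannot produce a second positive eigenvalue. What your route buys: it avoids the unwieldy explicit congruences, merges the paper's three subcases of $a=b=1$ into one determinant formula, and makes each condition of the lemma visibly a sign condition on a small determinant. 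What the paper's route buys: its Case 3 scalar treats all values of $b$ uniformly after one reduction, and it stays within the explicit-congruence style reused verbatim in the companion result for $K(r;k;a^i,0^{-i},c^{1},0^{-1},s)$ (Lemma \ref{3.10}), so the two proofs share their machinery.
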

\begin{proof}
The Hermitian adjacency matrix of $\widetilde{G}=K(r; k; a^i, b^{-i}, 0^{1}, 0^{-1}, s)$ can be written as
\begin{align*}
H(\widetilde{G})={\left(
        \begin{array}{ccccc}
        H(K_r) & \mathbf{1} & {\bf 0} & {\bf 0} & {\bf 0} \\[3pt]
        \mathbf{1}^{\top} & 0 & i\mathbf{1}^{\top} & -i\mathbf{1}^{\top} & {\bf 0} \\[3pt]
        {\bf 0} & -i\mathbf{1} & H(K_a) & {\bf J} & {\bf J} \\[3pt]
        {\bf 0} & i\mathbf{1} & {\bf J} & H(K_b) & {\bf J} \\[3pt]
        {\bf 0} & {\bf 0} & {\bf J} & {\bf J} & H(K_s)
        \end{array}
        \right)}.
\end{align*}

{\bf{Case 1.}}\ $a=1$ and $b=0$. If $s=1$, then $\widetilde{G}$ has a pendant vertex, by Theorem \ref{lemp}, it is easy to see that $p(\widetilde{G})=2$. If $s\geq2$, then
\begin{align*}
H(\widetilde{G})={\left(
        \begin{array}{cccc}
        H(K_r) & \mathbf{1} & {\bf 0} & {\bf 0} \\[3pt]
        \mathbf{1}^{\top} & 0 & i & {\bf 0} \\[3pt]
        {\bf 0} & -i & 0 & \mathbf{1}^{\top} \\[3pt]
        {\bf 0} & {\bf 0} & \mathbf{1} & H(K_s)
        \end{array}
        \right)}.
\end{align*}
Let
\begin{align*}
C={\left(
        \begin{array}{cccc}
        I_r & -\frac{1}{r-1}\mathbf{1} & {\bf 0} & {\bf 0} \\[3pt]
        {\bf 0} & 1 & 0 & {\bf 0} \\[3pt]
        {\bf 0} & 0 & 1 & {\bf 0} \\[3pt]
        {\bf 0} & {\bf 0} & -\frac{1}{s-1}\mathbf{1} & I_s
        \end{array}
        \right)}.
\end{align*}
Then
\begin{align*}
C^*H(\widetilde{G})C={\left(
        \begin{array}{cccc}
        H(K_r) & {\bf 0} & {\bf 0} & {\bf 0} \\[3pt]
        {\bf 0} & -\frac{r}{r-1} & i & {\bf 0} \\[3pt]
        {\bf 0} & -i & -\frac{s}{s-1} & {\bf 0} \\[3pt]
        {\bf 0} & {\bf 0} & {\bf 0} & H(K_s)
        \end{array}
        \right)}.
\end{align*}
It is easy to see that the eigenvalues of
\begin{align*}
{\left(
        \begin{array}{cc}
        -\frac{r}{r-1} & i \\[3pt]
        -i & -\frac{s}{s-1}
        \end{array}
        \right)}
\end{align*}
are both negative, therefore
$$ p(C^*H(\widetilde{G})C)=p(H(K_r))+p(H(K_s))=2.$$
Thus in this case, the positive inertia index of $K(r; k; a^i, b^{-i}, 0^{1}, 0^{-1}, s)$ must be 2.

{\bf{Case 2.}}\ $a=b=1$. If $s=0$, then
\begin{align*}
H(\widetilde{G})={\left(
        \begin{array}{cccc}
        H(K_r) & \mathbf{1} & {\bf 0} & {\bf 0} \\[3pt]
        \mathbf{1}^{\top} & 0 & i & -i \\[3pt]
        {\bf 0} & -i & 0 & 1 \\[3pt]
        {\bf 0} & i & 1 & 0
        \end{array}
        \right)}.
\end{align*}
Let
\begin{align*}
C_1={\left(
        \begin{array}{cccc}
        I_r & -\frac{1}{r-1}\mathbf{1} & {\bf 0} & {\bf 0} \\[3pt]
        {\bf 0} & 1 & 0 & 0 \\[3pt]
        {\bf 0} & -i & 1 & 0 \\[3pt]
        {\bf 0} & i & 0 & 1
        \end{array}
        \right)}.
\end{align*}
Then
\begin{align*}
C_1^*H(\widetilde{G})C_1={\left(
        \begin{array}{cccc}
        H(K_r) & {\bf 0} & {\bf 0} & {\bf 0} \\[3pt]
        {\bf 0} & 2-\frac{r}{r-1} & 0 & 0 \\[3pt]
        {\bf 0} & 0 & 0 & 1 \\[3pt]
        {\bf 0} & 0 & 1 & 0
        \end{array}
        \right)}.
\end{align*}
Thus $p(\widetilde{G})=2$ if and only if $r=2$.

If $s=1$, then
\begin{align*}
H(\widetilde{G})={\left(
        \begin{array}{ccccc}
        H(K_r) & \mathbf{1} & {\bf 0} & {\bf 0} & {\bf 0} \\[3pt]
        \mathbf{1}^{\top} & 0 & i & -i & 0 \\[3pt]
        {\bf 0} & -i & 0 & 1 & 1 \\[3pt]
        {\bf 0} & i & 1 & 0 & 1 \\[3pt]
        {\bf 0} & 0 & 1 & 1 & 0
        \end{array}
        \right)}.
\end{align*}
Let
\begin{align*}
C_2={\left(
        \begin{array}{ccccc}
        I_r & -\frac{1}{r-1}\mathbf{1} & 0 & 0 & 0 \\[3pt]
        0 & 1 & 0 & 0 & 0 \\[3pt]
        0 & 0 & 1 & 0 & 0 \\[3pt]
        0 & 0 & -1 & 1 & 0 \\[3pt]
        0 & -i & -1 & 0 & 1
        \end{array}
        \right)}.
\end{align*}
Then
\begin{align*}
C_2^*H(\widetilde{G})C_2={\left(
        \begin{array}{ccccc}
        H(K_r) & {\bf 0} & {\bf 0} & {\bf 0} & {\bf 0} \\[3pt]
        {\bf 0} & -\frac{r}{r-1} & 2i & 0 & 0 \\[3pt]
        {\bf 0} & -2i & -2 & 0 & 0 \\[3pt]
        {\bf 0} & 0 & 0 & 0 & 1 \\[3pt]
        {\bf 0} & 0 & 0 & 1 & 0
        \end{array}
        \right)}.
\end{align*}
Thus $p(\widetilde{G})=2$ if and only if
\begin{align*}
{\left(
        \begin{array}{cc}
        -\frac{r}{r-1} & 2i \\[3pt]
        -2i & -2
        \end{array}
        \right)}.
\end{align*}
has no positive eigenvalue, if and only if $r=2$.

If $s\geq2$, let
\begin{align*}
C_3={\left(
        \begin{array}{ccccc}
        I_r & -\frac{1}{r-1}\mathbf{1} & {\bf 0} & {\bf 0} & {\bf 0} \\[3pt]
        {\bf 0} & 1 & 0 & 0 & {\bf 0} \\[3pt]
        {\bf 0} & 0 & 1 & 0 & {\bf 0} \\[3pt]
        {\bf 0} & 0 & 0 & 1 & {\bf 0} \\[3pt]
        {\bf 0} & {\bf 0} & -\frac{1}{s-1}\mathbf{1} & -\frac{1}{s-1}\mathbf{1} & I_s
        \end{array}
        \right)}.
\end{align*}
Then
\begin{align*}
C_3^*H(\widetilde{G})C_3={\left(
        \begin{array}{ccccc}
        H(K_r) & {\bf 0} & {\bf 0} & {\bf 0} & {\bf 0} \\[3pt]
        {\bf 0} & -\frac{r}{r-1} & i & -i & {\bf 0} \\[3pt]
        {\bf 0} & -i & -\frac{s}{s-1} & -\frac{1}{s-1} & {\bf 0} \\[3pt]
        {\bf 0} & i & -\frac{1}{s-1} & -\frac{s}{s-1} & {\bf 0} \\[3pt]
        {\bf 0} & {\bf 0} & {\bf 0} & {\bf 0} & H(K_s)
        \end{array}
        \right)}.
\end{align*}
If $r>2$, since the determinant of
\begin{align*}
{\left(
        \begin{array}{ccc}
        -\frac{r}{r-1} & i & -i \\[3pt]
        -i & -\frac{s}{s-1} & -\frac{1}{s-1} \\[3pt]
        i & -\frac{1}{s-1} & -\frac{s}{s-1}
        \end{array}
        \right)}
\end{align*}
is positive, the positive inertia index of this matrix is at least 1, and thus $p(\widetilde{G})\geq3$. If $r=2$, it is easy to see that the above matrix has 0 as an eigenvalue and the other two eigenvalues are negative, and thus $p(\widetilde{G})=2$. Consequently, when $a=b=1$, $p(\widetilde{G})=2$ if and only if
$r=2$.

{\bf{Case 3.}}\ $a\geq2$. If $k=a=2$, then by Lemma \ref{lem3.3},
$$2=p(K_r)+p(K_k)\leq p(\widetilde{G})\leq p(K_{r+1})+p(\widetilde{K_{k+1}})=2,$$
and thus $p(\widetilde{G})=2$. Suppose $k\geq3$. Let $U_n$ be the strictly upper triangular matrix of order $n$ whose $(i, j)$-entries are 1 for all $1\leq i<j\leq n$. Let $D=diag(\frac{-1}{a-1}, \frac{-1}{a}, \cdots, \frac{-1}{k-2})$. Set
\begin{align*}
C_4={\left(
        \begin{array}{ccc}
        I_r & {\bf 0} & {\bf 0} \\[3pt]
        {\bf 0} & 1 & {\bf 0} \\[3pt]
        {\bf 0} & {\bf 0} & P
        \end{array}
        \right)},
\end{align*}
where $P$ is the following matrix
\begin{align*}
P={\left(
        \begin{array}{cccc}
        I_a & {\bf J}D \\[3pt]
        {\bf 0} & I_{k-a}+U_{k-a}D
        \end{array}
        \right)}.
\end{align*}
Then $C_4^*H(\widetilde{G})C_4$ is the following matrix:
\begin{align*}
{\left(
        \begin{array}{cccccccccc}
        H(K_r) & \mathbf{1} & {\bf 0} & {\bf 0} & {\bf 0} & \cdots & {\bf 0} & {\bf 0} & \cdots & {\bf 0} \\[3pt]
        \mathbf{1}^{\top} & 0 & i\mathbf{1}^{\top} & \frac{(1-2a)i}{a-1} & \frac{(1-2a)i}{a} & \cdots & \frac{(1-2a)i}{a+b-2} & \frac{(b-a)i}{a+b-1} & \cdots & \frac{(b-a)i}{k-2} \\[3pt]
        {\bf 0} & -i\mathbf{1} & H(K_a) & 0 & 0 & \cdots & 0 & 0 & \cdots & 0 \\[3pt]
        {\bf 0} & \frac{(2a-1)i}{a-1} & 0 & \frac{-a}{a-1} & 0 & \cdots & 0 & 0 & \cdots & 0 \\[3pt]
        {\bf 0} & \frac{(2a-1)i}{a} & 0 & 0 & \frac{-(a+1)}{a} & \cdots & 0 & 0 & \cdots & 0 \\[3pt]
        \vdots & \vdots & \vdots & \vdots & \vdots & \ddots & \vdots & \vdots & \cdots & \vdots \\[3pt]
        {\bf 0} & \frac{(2a-1)i}{a+b-2} & 0 & 0 & 0 & \cdots & \frac{-(a+b-1)}{a+b-2} & 0 & \cdots & 0 \\[3pt]
        {\bf 0} & \frac{(a-b)i}{a+b-1} & 0 & 0 & 0 & \cdots & 0 & \frac{-(a+b)}{a+b-1} & \cdots & 0 \\[3pt]
        \vdots & \vdots & \vdots & \vdots & \vdots & \ddots & \vdots & \vdots & \cdots & \vdots \\[3pt]
        {\bf 0} & \frac{(a-b)i}{k-2} & 0 & 0 & 0 & \cdots & 0 & 0 & \cdots & \frac{-(k-1)}{k-2}
        \end{array}
        \right)}.
\end{align*}
Let
\begin{align*}
C_5={\left(
        \begin{array}{cccccccccc}
        I_r & -\frac{1}{r-1}\mathbf{1} & {\bf 0} & {\bf 0} & {\bf 0} & \ldots & {\bf 0} & {\bf 0} & \ldots & {\bf 0} \\[3pt]
        {\bf 0} & 1 & 0 & 0 & 0 & \ldots & 0 & 0 & \ldots & 0 \\[3pt]
        {\bf 0} & \frac{i}{a-1}\mathbf{1} & I_a & 0 & 0 & \ldots & 0 & 0 & \ldots & 0 \\[3pt]
        {\bf 0} & \frac{(2a-1)i}{a} & 0 & 1 & 0 & \ldots & 0 & 0 & \ldots & 0 \\[3pt]
        {\bf 0} & \frac{(2a-1)i}{a+1} & 0 & 0 & 1 & \ldots & 0 & 0 & \ldots & 0 \\[3pt]
        \vdots & \vdots & \vdots & \vdots & \vdots & \ddots & \vdots & \vdots & \ldots & \vdots \\[3pt]
        {\bf 0} & \frac{(2a-1)i}{a+b-1} & 0 & 0 & 0 & \ldots & 1 & 0 & \ldots & 0 \\[3pt]
        {\bf 0} & \frac{(a-b)i}{a+b} & 0 & 0 & 0 & \ldots & 0 & 1 & \ldots & 0 \\[3pt]
        \vdots & \vdots & \vdots & \vdots & \vdots & \ddots & \vdots & \vdots & \ldots & \vdots \\[3pt]
        {\bf 0} & \frac{(a-b)i}{k-1} & 0 & 0 & 0 & \ldots & 0 & 0 & \ldots & 1
        \end{array}
        \right)}.
\end{align*}
Then $C_5^*C_4^*H(\widetilde{G})C_4C_5$ is the following matrix:
\begin{align*}
{\left(
        \begin{array}{cccccccccc}
        H(K_r) & {\bf 0} & {\bf 0} & {\bf 0} & {\bf 0} & \ldots & {\bf 0} & {\bf 0} & \ldots & {\bf 0} \\[3pt]
        {\bf 0} & \rho & 0 & 0 & 0 & \ldots & 0 & 0 & \ldots & 0 \\[3pt]
        {\bf 0} & 0 & H(K_a) & 0 & 0 & \ldots & 0 & 0 & \ldots & 0 \\[3pt]
        {\bf 0} & 0 & 0 & \frac{-a}{a-1} & 0 & \ldots & 0 & 0 & \ldots & 0 \\[3pt]
        {\bf 0} & 0 & 0 & 0 & \frac{-(a+1)}{a} & \ldots & 0 & 0 & \ldots & 0 \\[3pt]
        \vdots & \vdots & \vdots & \vdots & \vdots & \ddots & \vdots & \vdots & \ldots & \vdots \\[3pt]
        {\bf 0} & 0 & 0 & 0 & 0 & \ldots & \frac{-(a+b-1)}{a+b-2} & 0 & \ldots & 0 \\[3pt]
        {\bf 0} & 0 & 0 & 0 & 0 & \ldots & 0 & \frac{-(a+b)}{a+b-1} & \ldots & 0 \\[3pt]
        \vdots & \vdots & \vdots & \vdots & \vdots & \ddots & \vdots & \vdots & \ldots & \vdots \\[3pt]
        {\bf 0} & 0 & 0 & 0 & 0 & \ldots & 0 & 0 & \ldots & \frac{-(k-1)}{k-2}
        \end{array}
        \right)},
\end{align*}
where
\begin{align*}
\rho&=(2a-1)^2\left(\frac{1}{a-1}-\frac{1}{a+b-1}\right)+(a-b)^2\left(\frac{1}{a+b-1}-\frac{1}{k-1}\right)-\frac{r}{r-1}-\frac{a}{a-1}\\
&=\frac{b(2a-1)^2(k-1)+s(a-b)^2(a-1)}{(a-1)(a+b-1)(k-1)}-\frac{r}{r-1}-\frac{a}{a-1}.
\end{align*}
For convenience, set $\phi=\frac{b(2a-1)^2(k-1)+s(a-b)^2(a-1)}{(a-1)(a+b-1)(k-1)}$. Clearly, $p(\widetilde{G})=2$ if and only if $\rho\leq0$, if and only if
$$\phi\leq\frac{r}{r-1}+\frac{a}{a-1}.$$
Clearly, $\phi\geq \frac{b(2a-1)^2}{(a-1)(a+b-1)}$ . If $b\geq2$, since $2a-1\geq a+b-1$ we have
$$\phi\geq\frac{2(2a-1)}{a-1}>4\geq\frac{r}{r-1}+\frac{a}{a-1},$$
and thus $p(\widetilde{G})>2$. If $b=1$, then
$$\phi\geq\frac{(2a-1)^2}{a(a-1)}>4\geq\frac{r}{r-1}+\frac{a}{a-1},$$
leading to $p(\widetilde{G})>2$. If $b=0$, then $\phi=\frac{a^2s}{(a-1)(a+s-1)}$ . If $s=0$, then $\phi=0$. If $s=1$, then $\phi=\frac{a}{a-1}$. In both cases, we have $\phi\leq\frac{r}{r-1}+\frac{a}{a-1}$ and thus $p(\widetilde{G})=2$. If $s\geq2$,
the condition $\phi\leq\frac{r}{r-1}+\frac{a}{a-1}$ is equivalent to
$$\frac{1}{r}+\frac{1}{a}+\frac{1}{s-1}\geq1.$$
This completes the proof.
\end{proof}
\begin{cor}\label{cor}
For $r\geq2, k\geq2$ and $p\geq1$, the positive inertia index of $\widetilde{G}=K(r; k; p)$ is 2 if and only if one of the following conditions holds:
\begin{wst}
  \item[{\rm (1)}] $p=1$;
  \item[{\rm (2)}] $p\geq2$ and $k-p\leq1$;
  \item[{\rm (3)}] $p\geq2, k-p\geq2$ and $\frac{1}{r}+\frac{1}{p}+\frac{1}{k-p-1}\geq1$.
\end{wst}
\end{cor}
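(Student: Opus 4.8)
The plan is to reduce Corollary \ref{cor} to the $b=0$ instance of Lemma \ref{3.8} by a switching-equivalence argument, after which the three surviving cases of that lemma translate directly into conditions (1)--(3). The point is that Lemma \ref{3.8} is phrased for the gain graph $K(r;k;a^i,b^{-i},0^1,0^{-1},s)$, whereas the corollary concerns the \emph{simple} graph $K(r;k;p)$, so the first task is to exhibit $K(r;k;p)$ as (the underlying graph of) a $b=0$ gain graph.

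First I would identify $K(r;k;p)$ with the complex unit gain graph $K(r; k; p^i, 0^{-i}, 0^1, 0^{-1}, k-p)$, in which $v$ joins $p$ of the $k$ singleton classes by edges of gain $i$ and all remaining edges carry gain $1$, and argue that this gain graph is \emph{positive}. Since $K_r$ and $K_k$ are joined only through $v$, every cycle either lies entirely inside one clique---where all gains equal $1$---or passes through $v$ exactly once. In the latter case the two edges of the cycle incident to $v$ either both run to $K_r$, contributing $1\cdot 1=1$, or both run to the $v$-adjacent classes of $K_k$, contributing $i\cdot\overline{i}=1$; a mixed pair is impossible precisely because $K_r$ and $K_k$ communicate only via $v$, and the remaining edges lie inside a clique and contribute $1$. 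Hence every cycle has value $1$, so by Lemma \ref{lem1.5} the gain graph is switching equivalent to its underlying graph, which is exactly $K(r;k;p)$. As switching-equivalent graphs are cospectral, this yields
$$p\big(K(r;k;p)\big)=p\big(K(r; k; p^i, 0^{-i}, 0^1, 0^{-1}, k-p)\big).$$

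Next I would apply Lemma \ref{3.8} to the right-hand side with the substitution $a=p$, $b=0$, $s=k-p$; the hypotheses $r\ge2$, $k\ge2$, $a\ge b$, $a\ge1$ hold since $p\ge1$. With $b=0$, the cases of Lemma \ref{3.8} giving positive inertia index $2$ are exactly its case (1) [$a=1$], case (3) [$a\ge2$, $s\le1$], and case (4) [$a\ge2$, $s\ge2$ together with $\frac1r+\frac1a+\frac1{s-1}\ge1$]; as these exhaust all $a\ge1$ with $b=0$, no other $b=0$ configuration yields $2$. Rewriting in terms of $p=a$ and $k-p=s$: $p=1$ always works (condition (1)); for $p\ge2$ with $k-p\le1$ it always works (condition (2)); and for $p\ge2$ with $k-p\ge2$ it works exactly when $\frac1r+\frac1p+\frac1{k-p-1}\ge1$ (condition (3)). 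Because these alternatives exhaust all admissible $(p,k)$ and Lemma \ref{3.8} is an equivalence, both directions of the corollary follow.

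The only genuinely substantive step is the switching equivalence, i.e.\ verifying positivity of $K(r; k; p^i, 0^{-i}, 0^1, 0^{-1}, k-p)$; once this is secured the result is a transcription of Lemma \ref{3.8}. I expect the main point to watch to be the cycle analysis through $v$---confirming that no cycle mixes a gain-$1$ edge at $v$ with a gain-$i$ edge at $v$, which rests on $K_r$ and $K_k$ meeting only at $v$---together with checking that the enumeration of the $b=0$ subcases of Lemma \ref{3.8} is exhaustive, so that the `only if' direction is preserved and not merely the `if' direction.
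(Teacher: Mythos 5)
Your proposal is correct and takes essentially the same approach as the paper: both reduce the corollary to the $b=0$ case of Lemma \ref{3.8} by establishing that $K(r;k;p)$ is switching equivalent to $K(r;k;p^i,0^{-i},0^1,0^{-1},k-p)$ and then transcribing the surviving conditions. The only (cosmetic) difference is how the switching equivalence is justified --- the paper obtains it in one line by a mixed two-way switching on the edge cut between $v$ and $\bigcup_{j=1}^{p}V_j$, whereas you verify that every cycle of the gain graph has value $1$ and invoke Lemma \ref{lem1.5}; both arguments are valid.
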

\begin{proof}
Using mixed two-way switching of mixed graphs we have $\widetilde{G}=K(r; k; p)$ is switching equivalent to the mixed graph $\widetilde{G}=K(r; k; p^i, 0^{-i}, 0^1, 0^{-1}, k-p)$. Thus, the result follows from Lemma \ref{3.8}.
\end{proof}
\begin{lem}\label{3.10}
For $r\geq2, k\geq2, a\geq c, a\geq1$ and $s=k-a-c$, the positive inertia index of $\widetilde{G}=K(r; k; a^i, 0^{-i}, c^{1}, 0^{-1}, s)$ is 2 if and only if one of the following conditions holds:
\begin{wst}
  \item[{\rm (1)}] $a=1, c=0$;
  \item[{\rm (2)}] $a=c=1$ and either $s=0$ or $s=1$;
  \item[{\rm (3)}] $a=c=1, s=2$ and either $r=3$ or $r=4$;
  \item[{\rm (4)}] $a=c=1$, $s=3$ and $r=3$;
  \item[{\rm (5)}] $a=c=1$, $s\geq2$ and $r=2$;
  \item[{\rm (6)}] $a=c=2, s=0$ and $2\leq r\leq4$;
  \item[{\rm (7)}] $a=c=2, s=1$ and $r=2$;
  \item[{\rm (8)}] $a=3, s=0$ and $r=c=2$;
  \item[{\rm (9)}] $a=4, c=r=2$ and $s=0$;
  \item[{\rm (10)}] $a\geq2, c=1$ and $\frac{as-1}{a+s}\leq\frac{1}{r-1}$;
  \item[{\rm (11)}] $a\geq2, c=0$ and either $s=0$ or $s=1$;
  \item[{\rm (12)}] $a\geq2, c=0, s\geq2$ and $\frac{1}{r}+\frac{1}{a}+\frac{1}{s-1}\geq1$.
\end{wst}
\end{lem}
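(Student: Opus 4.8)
The plan is to reorganize the congruence reduction of Lemma \ref{3.8} as a double Schur complement, which isolates the whole dependence on $a,c,s,r$ into a single scalar. First I would record that $v$ is a cut vertex: the clique $K_r$ is joined to $v$ only by undirected edges, so $K_r$ together with $v$ is a copy of $K_{r+1}$, while $v$ meets the complete graph $K_k$ through the column $w=(-i\mathbf{1}_a,\mathbf{1}_c,\mathbf{0}_s)^{\top}$, the $K_k$-block being $J_k-I_k$. When $c=0$ the graph is literally $K(r;k;a^i,0^{-i},0^1,0^{-1},s)$, so Lemma \ref{3.8} (with $b=0$) already yields items (1), (11) and (12); I would dispose of this case by citation and assume $c\ge1$ from then on.

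For $c\ge1$ I would eliminate the two clique blocks in turn, using the same block congruence as in Lemma \ref{lem3.3}. As $r,k\ge2$, both $J_r-I_r$ and $J_k-I_k$ are invertible, with inverses $\tfrac{1}{r-1}J_r-I_r$ and $\tfrac{1}{k-1}J_k-I_k$; eliminating $K_r$ first puts the scalar $-\tfrac{r}{r-1}$ on the diagonal of $v$, and then eliminating $K_k$ leaves a single scalar $\rho$ at $v$. By Sylvester's law (Lemma \ref{lem1.4}) this gives
$$ {\rm In}(\widetilde{G})=(1,r-1,0)+(1,k-1,0)+{\rm In}(\rho). $$
Using $\mathbf{1}^{\top}(J_r-I_r)^{-1}\mathbf{1}=\tfrac{r}{r-1}$ together with $w^{*}(J_k-I_k)^{-1}w=\tfrac{|\mathbf{1}^{\top}w|^{2}}{k-1}-\|w\|^{2}=\tfrac{a^{2}+c^{2}}{k-1}-(a+c)$, a short computation yields
$$ \rho=(a+c)-\frac{a^{2}+c^{2}}{a+c+s-1}-\frac{r}{r-1}. $$
Hence $p(\widetilde{G})=2$ precisely when $\rho\le0$ (and $p(\widetilde{G})=3$ when $\rho>0$), that is, exactly when
$$ \frac{2ac+(a+c)(s-1)}{a+c+s-1}\le\frac{r}{r-1}. \qquad (\star) $$

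It then remains to solve $(\star)$ under $r\ge2,\ k\ge2,\ a\ge c\ge1,\ s=k-a-c\ge0$, and to match the solution set with (2)--(10). The transparent part is $c=1$: writing $\tfrac{r}{r-1}=1+\tfrac{1}{r-1}$ and simplifying, $(\star)$ collapses to $\tfrac{as-1}{a+s}\le\tfrac{1}{r-1}$, which is item (10) when $a\ge2$ and, when $a=1$, becomes $\tfrac{s-1}{s+1}\le\tfrac{1}{r-1}$, whose solutions split into (2)--(5). The main obstacle is the regime $c\ge2$. Here I would first note that the left side of $(\star)$ is strictly increasing in $s$, since its $s$-derivative equals $\tfrac{a^{2}+c^{2}}{(a+c+s-1)^{2}}>0$, while the right side never exceeds $2$ for $r\ge2$; thus I only need the finitely many $(a,c,s)$ with $c\ge2$ whose left side is at most $2$. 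Reducing $\tfrac{2ac+(a+c)(s-1)}{a+c+s-1}\le2$ gives $2ac-3a-3c+2\le0$ at $s=0$ and $(a-1)(c-1)\le1$ at $s=1$, which together with the monotonicity in $s$ pin down exactly $(a,c,s)\in\{(2,2,0),(3,2,0),(4,2,0),(2,2,1)\}$; substituting each back into $(\star)$ and reading off the admissible $r$ yields items (6)--(9). Combining the three regimes $c=0$, $c=1$ and $c\ge2$ completes the characterization.
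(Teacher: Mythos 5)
Your proposal is correct, and it takes a genuinely more unified route than the paper. Both arguments rest on congruence plus Sylvester's law of inertia (Lemma \ref{lem1.4}), but the paper never produces a single criterion: it splits into Case 1 ($a=1,c=0$, quoted from Lemma \ref{3.8}), Case 2 ($a=c=1$, with three separate congruence matrices $T_1,T_2,T_3$ for $s=0$, $s=1$, $s\geq2$, the last requiring determinant and characteristic-polynomial analysis of a $3\times3$ block), and Case 3 ($a\geq2$, reusing the matrices $P$, $C_4$, $C_5$ of Lemma \ref{3.8}), whose criterion $\gamma\leq\frac{r}{r-1}+\frac{a}{a-1}$ divides by $a-1$ and so cannot cover $a=1$; within Case 3 the subcase $c=2$, $a\geq4$ is even settled with forbidden induced subgraphs $\widetilde{R_1},\widetilde{R_2}$. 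Your double Schur complement against the two invertible blocks $J_r-I_r$ and $J_k-I_k$, via $(J_n-I_n)^{-1}=\frac{1}{n-1}J_n-I_n$, collapses all of this into one inequality $(\star)$ valid for every $a\geq c\geq0$ and $r,k\geq2$. I verified the details: the block congruence does give ${\rm In}(\widetilde{G})=(1,r-1,0)+(1,k-1,0)+{\rm In}(\rho)$ with $\rho=(a+c)-\frac{a^2+c^2}{a+c+s-1}-\frac{r}{r-1}$; $(\star)$ reduces to $\frac{as-1}{a+s}\leq\frac{1}{r-1}$ when $c=1$ and to $\frac1r+\frac1a+\frac1{s-1}\geq1$ when $c=0$, $s\geq2$, agreeing with the paper's $\gamma$-criterion wherever both apply; and your monotonicity-in-$s$ argument does pin down exactly the candidates $(2,2,0),(3,2,0),(4,2,0),(2,2,1)$ in the regime $c\geq2$, which yield items (6)--(9) with the stated ranges of $r$. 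What your route buys is uniformity and much lighter arithmetic (no $3\times3$ spectra, no forbidden subgraphs, no singular case $a=1$); what the paper's route buys is merely that its Case 3 runs parallel to the proof of Lemma \ref{3.8}. In fact your computation, applied with $w=(-i\mathbf{1}_a,i\mathbf{1}_b,\mathbf{0}_s)^{\top}$ so that $|\mathbf{1}^{\top}w|^{2}=(a-b)^{2}$, would subsume Lemma \ref{3.8} as well.
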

\begin{proof}
The Hermitian adjacency matrix of $\widetilde{G}=K(r; k; a^i, 0^{-i}, c^{1}, 0^{-1}, s)$ can be written as
\begin{align*}
H(\widetilde{G})={\left(
        \begin{array}{ccccc}
        H(K_r) & \mathbf{1} & {\bf 0} & {\bf 0} & {\bf 0} \\[3pt]
        \mathbf{1}^{\top} & 0 & i\mathbf{1}^{\top} & \mathbf{1}^{\top} & {\bf 0} \\[3pt]
        {\bf 0} & -i\mathbf{1} & H(K_a) & {\bf J} & {\bf J} \\[3pt]
        {\bf 0} & \mathbf{1} & {\bf J} & H(K_c) & {\bf J} \\[3pt]
        {\bf 0} & {\bf 0} & {\bf J} & {\bf J} & H(K_s)
        \end{array}
        \right)}.
\end{align*}

{\bf{Case 1.}}\ $a=1$ and $c=0$. The result follows from Lemma \ref{3.8} (1). Thus in this case, the positive inertia index of $K(r; k; a^i, 0^{-i}, c^{1}, 0^{-1}, s)$ must be 2.

{\bf{Case 2.}}\ $a=c=1$. If $s=0$, then
\begin{align*}
H(\widetilde{G})={\left(
        \begin{array}{cccc}
        H(K_r) & \mathbf{1} & {\bf 0} & {\bf 0} \\[3pt]
        \mathbf{1}^{\top} & 0 & i & 1 \\[3pt]
        {\bf 0} & -i & 0 & 1 \\[3pt]
        {\bf 0} & 1 & 1 & 0
        \end{array}
        \right)}.
\end{align*}
Let
\begin{align*}
T_1={\left(
        \begin{array}{cccc}
        I_r & -\frac{1}{r-1}\mathbf{1} & {\bf 0} & {\bf 0} \\[3pt]
        {\bf 0} & 1 & 0 & 0 \\[3pt]
        {\bf 0} & -1 & 1 & 0 \\[3pt]
        {\bf 0} & i & 0 & 1
        \end{array}
        \right)}.
\end{align*}
Then
\begin{align*}
T_1^*H(\widetilde{G})T_1={\left(
        \begin{array}{cccc}
        H(K_r) & {\bf 0} & {\bf 0} & {\bf 0} \\[3pt]
        {\bf 0} & -\frac{r}{r-1} & 0 & 0 \\[3pt]
        {\bf 0} & 0 & 0 & 1 \\[3pt]
        {\bf 0} & 0 & 1 & 0
        \end{array}
        \right)}.
\end{align*}
Thus $p(\widetilde{G})=2$ since $r\geq2$. When $s=1$, then
\begin{align*}
H(\widetilde{G})={\left(
        \begin{array}{ccccc}
        H(K_r) & \mathbf{1} & {\bf 0} & {\bf 0} & {\bf 0} \\[3pt]
        \mathbf{1}^{\top} & 0 & i & 1 & 0 \\[3pt]
        {\bf 0} & -i & 0 & 1 & 1 \\[3pt]
        {\bf 0} & 1 & 1 & 0 & 1 \\[3pt]
        {\bf 0} & 0 & 1 & 1 & 0
        \end{array}
        \right)}.
\end{align*}
Let
\begin{align*}
T_2={\left(
        \begin{array}{ccccc}
        I_r & -\frac{1}{r-1}\mathbf{1} & {\bf 0} & {\bf 0} & {\bf 0} \\[3pt]
        {\bf 0} & 1 & 0 & 0 & 0 \\[3pt]
        {\bf 0} & 0 & 1 & 0 & 0 \\[3pt]
        {\bf 0} & 0 & -1 & 1 & 0 \\[3pt]
        {\bf 0} & -1 & -1 & 0 & 1
        \end{array}
        \right)}.
\end{align*}
Then
\begin{align*}
T_2^*H(\widetilde{G})T_2={\left(
        \begin{array}{ccccc}
        H(K_r) & {\bf 0} & {\bf 0} & {\bf 0} & {\bf 0} \\[3pt]
        {\bf 0} & -\frac{r}{r-1} & i-1 & 0 & 0 \\[3pt]
        {\bf 0} & -i-1 & -2 & 0 & 0 \\[3pt]
        {\bf 0} & 0 & 0 & 0 & 1 \\[3pt]
        {\bf 0} & 0 & 0 & 1 & 0
        \end{array}
        \right)}.
\end{align*}
Thus $p(\widetilde{G})=2$ if and only if
\begin{align*}
{\left(
        \begin{array}{cc}
        -\frac{r}{r-1} & i-1 \\[3pt]
        -i-1 & -2
        \end{array}
        \right)}
\end{align*}
has no positive eigenvalue, if and only if $r\geq2$. When $s\geq2$, let
\begin{align*}
T_3={\left(
        \begin{array}{ccccc}
        I_r & -\frac{1}{r-1}\mathbf{1} & {\bf 0} & {\bf 0} & {\bf 0} \\[3pt]
        {\bf 0} & 1 & 0 & 0 & {\bf 0} \\[3pt]
        {\bf 0} & 0 & 1 & 0 & {\bf 0} \\[3pt]
        {\bf 0} & 0 & 0 & 1 & {\bf 0} \\[3pt]
        {\bf 0} & {\bf 0} & -\frac{1}{s-1}\mathbf{1} & -\frac{1}{s-1}\mathbf{1} & I_s
        \end{array}
        \right)}.
\end{align*}
Then
\begin{align*}
T_3^*H(\widetilde{G})T_3={\left(
        \begin{array}{ccccc}
        H(K_r) & {\bf 0} & {\bf 0} & {\bf 0} & {\bf 0} \\[3pt]
        {\bf 0} & -\frac{r}{r-1} & i & 1 & {\bf 0} \\[3pt]
        {\bf 0} & -i & -\frac{s}{s-1} & -\frac{1}{s-1} & {\bf 0} \\[3pt]
        {\bf 0} & 1 & -\frac{1}{s-1} & -\frac{s}{s-1} & {\bf 0} \\[3pt]
        {\bf 0} & {\bf 0} & {\bf 0} & {\bf 0} & H(K_s)
        \end{array}
        \right)}.
\end{align*}
If $r\geq3$, since the determinant of
\begin{align}
{\left(
        \begin{array}{ccc}
        -\frac{r}{r-1} & i & 1 \\[3pt]
        -i & -\frac{s}{s-1} & -\frac{1}{s-1} \\[3pt]
        1 & -\frac{1}{s-1} & -\frac{s}{s-1}
        \end{array}
        \right)}
\end{align}
is $\frac{r(s-1)-2s}{(r-1)(s-1)}$. If $s\geq4$, then $\frac{r(s-1)-2s}{(r-1)(s-1)}\geq\frac{s-3}{(r-1)(s-1)}>0$. Thus the positive inertia index of matrix (3.1) is at least 1, and thus $p(\widetilde{G})\geq3$. If $s=3$ and $r=3$, matrix (3.1) has eigenvalues 0, $-\frac{3}{2}$ and $-3$ and thus $p(\widetilde{G})=2$. If $s=3$ and $r>3$, then $\frac{r(s-1)-2s}{(r-1)(s-1)}=\frac{r-3}{r-1}>0$. Thus the positive inertia index of matrix (3.1) is at least 1, and thus $p(\widetilde{G})\geq3$. If $s=2$ and $r>4$, then $\frac{r(s-1)-2s}{(r-1)(s-1)}=\frac{r-4}{r-1}>0$. Thus the positive inertia index of matrix (3.1) is at least 1, and thus $p(\widetilde{G})\geq3$. If $s=2$ and $r=4$, matrix (3.1) has eigenvalues 0, $\frac{-8+\sqrt{7}}{3}$ and $\frac{-8-\sqrt{7}}{3}$. If $s=2$ and $r=3$, matrix (3.1) has eigenvalues $-3.5878$, $-1.8363$ and $-0.0759$. In both cases, we have the positive inertia index of matrix (3.1) is 0, and thus $p(\widetilde{G})=2$.

If $r=2$, the characteristic polynomial of matrix (3.1) is $f(\lambda)=\lambda^3+\frac{2(2s-1)}{s-1}\lambda^2+\frac{3(s+1)}{(s-1)}\lambda+\frac{2}{s-1}$. We have $f'(\lambda)=3\lambda^2+\frac{4(2s-1)}{s-1}\lambda+\frac{3(s+1)}{(s-1)}$. It is easy to see that $f'(\lambda)>0$ for $\lambda\geq0$. Thus $f(\lambda)$ is strictly increasing in $[0, +\infty)$. Since $f(0)=\frac{2}{s-1}>0$, we have matrix (3.1) has no positive eigenvalue, and thus $p(\widetilde{G})=2$.

{\bf{Case 3.}}\ $a\geq2$. If $k=a=2$, then by Lemma \ref{lem3.3},
$$2=p(K_r)+p(K_k)\leq p(\widetilde{G})\leq p(K_{r+1})+p(\widetilde{K_{k+1}})=2,$$
and thus $p(\widetilde{G})=2$. Suppose $k\geq3$. Similarly, as in the proof of Case 3 of Lemma \ref{3.8}, set
\begin{align*}
T_4={\left(
        \begin{array}{ccc}
        I_r & {\bf 0} & {\bf 0} \\[3pt]
        {\bf 0} & 1 & {\bf 0} \\[3pt]
        {\bf 0} & {\bf 0} & P
        \end{array}
        \right)}.
\end{align*}

Then $T_4^*H(\widetilde{G})T_4$ is the following matrix:
\begin{align*}
{\left(
        \begin{array}{cccccccccc}
        H(K_r) & \mathbf{1} & {\bf 0} & {\bf 0} & {\bf 0} & \cdots & {\bf 0} & {\bf 0} & \cdots & {\bf 0} \\[3pt]
        \mathbf{1}^{\top} & 0 & i\mathbf{1}^{\top} & \frac{(a-1)-ai}{a-1} & \frac{(a-1)-ai}{a} & \cdots & \frac{(a-1)-ai}{a+c-2} & \frac{-c-ai}{a+c-1} & \cdots & \frac{-c-ai}{k-2} \\[3pt]
        {\bf 0} & -i\mathbf{1} & H(K_a) & 0 & 0 & \cdots & 0 & 0 & \cdots & 0 \\[3pt]
        {\bf 0} & \frac{(a-1)+ai}{a-1} & 0 & \frac{-a}{a-1} & 0 & \cdots & 0 & 0 & \cdots & 0 \\[3pt]
        {\bf 0} & \frac{(a-1)+ai}{a} & 0 & 0 & \frac{-(a+1)}{a} & \cdots & 0 & 0 & \cdots & 0 \\[3pt]
        \vdots & \vdots & \vdots & \vdots & \vdots & \ddots & \vdots & \vdots & \cdots & \vdots \\[3pt]
        {\bf 0} & \frac{(a-1)+ai}{a+c-2} & 0 & 0 & 0 & \cdots & \frac{-(a+c-1)}{a+c-2} & 0 & \cdots & 0 \\[3pt]
        {\bf 0} & \frac{-c+ai}{a+c-1} & 0 & 0 & 0 & \cdots & 0 & \frac{-(a+c)}{a+c-1} & \cdots & 0 \\[3pt]
        \vdots & \vdots & \vdots & \vdots & \vdots & \ddots & \vdots & \vdots & \cdots & \vdots \\[3pt]
        {\bf 0} & \frac{-c+ai}{k-2} & 0 & 0 & 0 & \cdots & 0 & 0 & \cdots & \frac{-(k-1)}{k-2}
        \end{array}
        \right)}.
\end{align*}
Let
\begin{align*}
T_5={\left(
        \begin{array}{cccccccccc}
        I_r & -\frac{1}{r-1}\mathbf{1} & {\bf 0} & {\bf 0} & {\bf 0} & \cdots & {\bf 0} & {\bf 0} & \cdots & {\bf 0} \\[3pt]
        {\bf 0} & 1 & 0 & 0 & 0 & \cdots & 0 & 0 & \cdots & 0 \\[3pt]
        {\bf 0} & \frac{i}{a-1}\mathbf{1} & I_a & 0 & 0 & \cdots & 0 & 0 & \cdots & 0 \\[3pt]
        {\bf 0} & \frac{(a-1)+ai}{a} & 0 & 1 & 0 & \cdots & 0 & 0 & \cdots & 0 \\[3pt]
        {\bf 0} & \frac{(a-1)+ai}{a+1} & 0 & 0 & 1 & \cdots & 0 & 0 & \cdots & 0 \\[3pt]
        \vdots & \vdots & \vdots & \vdots & \vdots & \ddots & \vdots & \vdots & \cdots & \vdots \\[3pt]
        {\bf 0} & \frac{(a-1)+ai}{a+c-1} & 0 & 0 & 0 & \cdots & 1 & 0 & \cdots & 0 \\[3pt]
        {\bf 0} & \frac{-c+ai}{a+c} & 0 & 0 & 0 & \cdots & 0 & 1 & \cdots & 0 \\[3pt]
        \vdots & \vdots & \vdots & \vdots & \vdots & \ddots & \vdots & \vdots & \cdots & \vdots \\[3pt]
        {\bf 0} & \frac{-c+ai}{k-1} & 0 & 0 & 0 & \cdots & 0 & 0 & \cdots & 1
        \end{array}
        \right)}.
\end{align*}
Then $T_5^*T_4^*H(\widetilde{G})T_4T_5$ is the following matrix:
\begin{align*}
{\left(
        \begin{array}{cccccccccc}
        H(K_r) & {\bf 0} & {\bf 0} & {\bf 0} & {\bf 0} & \ldots & {\bf 0} & {\bf 0} & \ldots & {\bf 0} \\[3pt]
        {\bf 0} & \xi & 0 & 0 & 0 & \ldots & 0 & 0 & \ldots & 0 \\[3pt]
        {\bf 0} & 0 & H(K_a) & 0 & 0 & \ldots & 0 & 0 & \ldots & 0 \\[3pt]
        {\bf 0} & 0 & 0 & \frac{-a}{a-1} & 0 & \ldots & 0 & 0 & \ldots & 0 \\[3pt]
        {\bf 0} & 0 & 0 & 0 & \frac{-(a+1)}{a} & \ldots & 0 & 0 & \ldots & 0 \\[3pt]
        \vdots & \vdots & \vdots & \vdots & \vdots & \ddots & \vdots & \vdots & \ldots & \vdots \\[3pt]
        {\bf 0} & 0 & 0 & 0 & 0 & \ldots & \frac{-(a+c-1)}{a+c-2} & 0 & \ldots & 0 \\[3pt]
        {\bf 0} & 0 & 0 & 0 & 0 & \ldots & 0 & \frac{-(a+c)}{a+c-1} & \ldots & 0 \\[3pt]
        \vdots & \vdots & \vdots & \vdots & \vdots & \ddots & \vdots & \vdots & \ldots & \vdots \\[3pt]
        {\bf 0} & 0 & 0 & 0 & 0 & \ldots & 0 & 0 & \ldots & \frac{-(k-1)}{k-2}
        \end{array}
        \right)},
\end{align*}
where
\begin{align*}
\xi&=[(a-1)^2+a^2]\left(\frac{1}{a-1}-\frac{1}{a+c-1}\right)+(a^2+c^2)\left(\frac{1}{a+c-1}-\frac{1}{k-1}\right)-\frac{r}{r-1}-\frac{a}{a-1}\\
&=\frac{c[(a-1)^2+a^2](k-1)+s(a-1)(a^2+c^2)}{(a-1)(a+c-1)(k-1)}-\frac{r}{r-1}-\frac{a}{a-1}.
\end{align*}
For convenience, set $\gamma=\frac{c[(a-1)^2+a^2](k-1)+s(a-1)(a^2+c^2)}{(a-1)(a+c-1)(k-1)}$. Clearly, $p(\widetilde{G})=2$ if and only if $\xi\leq0$, if and only if
$$\gamma\leq\frac{r}{r-1}+\frac{a}{a-1}.$$

If $c\geq4$, we have
$$
\gamma\geq\frac{c[(a-1)^2+a^2]}{(a-1)(a+c-1)}\geq\frac{4[(a-1)^2+a^2]}{(a-1)(2a-1)}=\frac{4(2a^2-2a+1)}{2a^2-3a+1}>4\geq\frac{r}{r-1}+\frac{a}{a-1},
$$
and thus $p(\widetilde{G})>2$.

If $c=3$ and $a\geq4$, then we have
$$
\gamma\geq\frac{c[(a-1)^2+a^2]}{(a-1)(a+c-1)}=\frac{3[(a-1)^2+a^2]}{(a-1)(a+2)}>4\geq\frac{r}{r-1}+\frac{a}{a-1},
$$
and thus $p(\widetilde{G})>2$.

If $c=a=3$, then $\gamma=\frac{39}{10}+\frac{18s}{5(5+s)}$, where $k=a+c+s$. If $\gamma\leq\frac{r}{r-1}+\frac{a}{a-1}=\frac{r}{r-1}+\frac{3}{2}$, then we have $\frac{12}{5}+\frac{18s}{5(5+s)}\leq\frac{r}{r-1}$. Hence, we obtain $2<\frac{r}{r-1}$, i.e. $r<2$, a contradiction. Thus, $p(\widetilde{G})>2$.

If $c=2$ and $a\geq4$, we now consider three cases. If $r\geq3$, then $\widetilde{R_1}$ (see Fig. 4) is an induced subgraph of $\widetilde{G}$. If $s\geq1$, then $\widetilde{R_2}$ (see Figure. 4) is an induced subgraph of $\widetilde{G}$. By direct calculation, we have $p(\widetilde{R_1})=p(\widetilde{R_2})=3$. Hence, $p(\widetilde{G})=2$ implies $r=2$ and $s=0$. Let $\gamma=\frac{c[(a-1)^2+a^2]}{(a-1)(a+c-1)}=\frac{2(2a^2-2a+1)}{a^2-1}\leq\frac{r}{r-1}+\frac{a}{a-1}=2+\frac{a}{a-1}$. We have $a=4$, $r=2$ and $s=0$. Hence, in this case, $p(\widetilde{G})=2$ if and only if $a=4, r=2$ and $s=0$.
\begin{figure}[h!]
\begin{center}
\psfrag{1}{$v$}\psfrag{2}{$\widetilde{R_1}$}\psfrag{3}{$\widetilde{R_2}$}\psfrag{a}{Figure 4: Forbidden subgraph $\widetilde{R_1}$ and $\widetilde{R_2}$.}
\includegraphics[width=120mm]{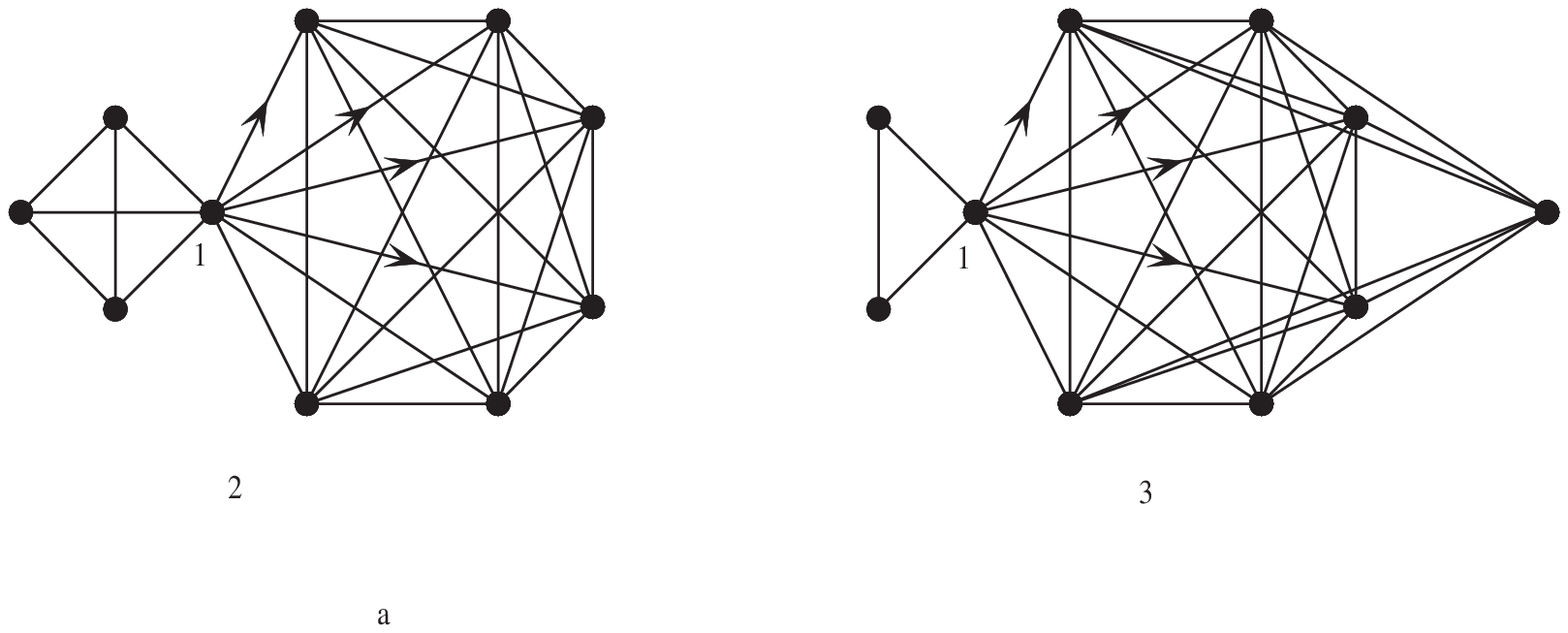} \\
\end{center}
\end{figure}

If $c=2$ and $a=3$, then $\gamma=\frac{13}{4}+\frac{13s}{4(4+s)}$, where $k=a+c+s$. If $\gamma\leq\frac{r}{r-1}+\frac{a}{a-1}=\frac{r}{r-1}+\frac{3}{2}$, then we have $(3+4s)r\leq7+5s$. Note that $r\geq2$, we obtain $2(3+4s)\leq7+5s$. Thus, we have $s=0$. In addition, $3r\leq7$ i.e. $r=2$. Thus, in this case, $p(\widetilde{G})=2$ if and only if $r=2$ and $s=0$.

If $c=2$ and $a=2$, then $\gamma=\frac{10(3+s)+8s}{3(3+s)}$, where $k=a+c+s$. If $\gamma\leq\frac{r}{r-1}+\frac{a}{a-1}=\frac{r}{r-1}+2$, then we have $(3+9s)r\leq12+12s$. Note that $r\geq2$, we obtain $2(3+9s)\leq12+12s$. Thus, we have $s=0$ or 1. If $s=0$, we have $2\leq r\leq4$. If $s=1$, we obtain $r=2$. Thus, in this case, $p(\widetilde{G})=2$ if and only if (i) $s=0$ and $2\leq r\leq4$ or (ii) $s=1$ and $r=2$.

If $c=1$, then $\gamma=\frac{[(a-1)^2+a^2](k-1)+s(a-1)(a^2+1)}{a(a-1)(k-1)}=\frac{[(a-1)^2+a^2](a+s)+s(a-1)(a^2+1)}{a(a-1)(a+s)}$, where $k=a+c+s$. The condition $\gamma\leq\frac{r}{r-1}+\frac{a}{a-1}$ is equivalent to $\frac{as-1}{a+s}\leq\frac{1}{r-1}$.

If $c=0$, then $\gamma=\frac{a^2s}{(a-1)(k-1)}=\frac{a^2s}{(a-1)(a+s-1)}$, where $k=a+c+s$. If $s=0$, then $\gamma=0$. If $s=1$, then $\gamma=\frac{a}{a-1}$. In both cases, we have $\gamma\leq\frac{r}{r-1}+\frac{a}{a-1}$ and thus $p(\widetilde{G})=2$. If $s\geq2$, the condition $\gamma\leq\frac{r}{r-1}+\frac{a}{a-1}$ is equivalent to
$$\frac{1}{r}+\frac{1}{a}+\frac{1}{s-1}\geq1.$$
This completes the proof.
\end{proof}
\begin{lem}\label{lem3.7}
Let $\widetilde{F_1}$ be a connected mixed graph with $p(\widetilde{F_1})=1$. Let $\widetilde{F_2}$ be a mixed graph obtained from $\widetilde{F_1}$ by adding a new vertex $v$ and adding some mixed edges between $v$ and some vertices of $\widetilde{F_1}$. If $rk(\widetilde{F_2})=rk(\widetilde{F_1})+1$ and $p(\widetilde{F_2})=2$, then $\widetilde{F_1}$ is switching equivalent to some complete $k$-partite graph $K_{n_1', n_2',\ldots, n_k'}$ and $\widetilde{F_2}$ is switching equivalent to some complex unit gain graph $K(0; n_1'', n_2'',\ldots, n_k''; a_*^i, b_*^{-i}, c_*^{1}, d_*^{-1}, k-a_*-b_*-c_*-d_*)$.
\end{lem}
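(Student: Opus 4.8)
The plan is to first classify $\widetilde{F_1}$ by invoking Lemma \ref{lem1.9}, then to discard an exceptional case using the twin-reduction criterion, and finally to pin down $\widetilde{F_2}$ by a border-rank (Schur complement) computation that forces $v$ to attach to whole partition classes. First I would observe that $\widetilde{F_1}$, being connected with $p(\widetilde{F_1})=1$, has no isolated vertices, so Lemma \ref{lem1.9} gives that $\widetilde{F_1}$ is switching equivalent either to a complete multipartite graph $K_{n_1',\ldots,n_k'}$ or to some $\overrightarrow{C_3}(t_1,t_2,t_3)$. The first preliminary task is to rule out the odd-triangle alternative.

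Suppose $\widetilde{F_1}$ were switching equivalent to $\overrightarrow{C_3}(t_1,t_2,t_3)$. By Lemma \ref{rank} we have $rk(\widetilde{F_1})=2$, so the hypothesis $rk(\widetilde{F_2})=rk(\widetilde{F_1})+1$ forces $rk(\widetilde{F_2})=3$, and together with $p(\widetilde{F_2})=2$ this gives $n(\widetilde{F_2})=1$. By Lemma \ref{ev}, $rk(\widetilde{F_2})=3$ means $T_{\widetilde{F_2}}$ is an even triangle; a direct inertia computation shows that among even triangles only the \emph{negative} one has positive inertia index $2$ (its $H$-eigenvalues being $1,1,-2$), so $\widetilde{F_2}$ is switching equivalent to a negative complete tripartite mixed graph. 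Now deleting $v$ must recover $\widetilde{F_1}$, and there are only two possibilities: if $v$ lies in a twin class of size at least $2$, then $\widetilde{F_2}-v$ still has three nonempty twin classes forming a negative triangle, whence $rk(\widetilde{F_1})=3\neq 2$; if $v$ is a singleton class, then $\widetilde{F_2}-v$ has two classes, so the underlying graph of $\widetilde{F_1}$ is a triangle-free complete bipartite graph, contradicting that the underlying graph of $\overrightarrow{C_3}(t_1,t_2,t_3)$ contains a triangle (switching preserves the underlying graph). Either branch is contradictory, so $\widetilde{F_1}$ is switching equivalent to $K_{n_1',\ldots,n_k'}$.

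With $\widetilde{F_1}$ positive complete multipartite, Lemma \ref{lem1.5} supplies a four-way switching (and possibly a converse) carrying $\widetilde{F_1}$ to its underlying graph $K_{n_1',\ldots,n_k'}$. Crucially, a four-way switching is a diagonal conjugation with entries in $\{\pm1,\pm i\}$, so extending it by fixing $v$ and applying it to $\widetilde{F_2}$ keeps $\widetilde{F_2}$ a genuine mixed graph: the gains on the edges at $v$ remain fourth roots of unity. After this normalization I would write
\[
H(\widetilde{F_2})=\begin{pmatrix} M & \beta \\ \beta^{*} & 0 \end{pmatrix},
\]
where $M=H(K_{n_1',\ldots,n_k'})$ and $\beta$ records the gains from the vertices of $\widetilde{F_1}$ to $v$. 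As in the congruence argument of Lemma \ref{lem3.3}, bordering a Hermitian matrix increases its rank by $2$ exactly when $\beta\notin\operatorname{range}(M)$ and by $0$ or $1$ otherwise; since our increase is exactly $1$, necessarily $\beta\in\operatorname{range}(M)$. Writing $M={\bf J}-\operatorname{blockdiag}({\bf J}_{n_1'},\ldots,{\bf J}_{n_k'})$, its columns are the vectors ${\bf 1}-\chi_{V_l}$, whose span is precisely the space of vectors constant on each partition class. Hence $\beta$ is constant on every class $V_l$, i.e.\ $v$ is either nonadjacent to all of $V_l$ or adjacent to all of $V_l$ with a single common gain $\gamma_l\in\{1,i,-1,-i\}$. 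Grouping the classes by the value of $\gamma_l$ exhibits $\widetilde{F_2}$ exactly as $K(0;n_1'',\ldots,n_k'';a_*^i,b_*^{-i},c_*^{1},d_*^{-1},k-a_*-b_*-c_*-d_*)$, and since the rank strictly increases at least one $\gamma_l$ is nonzero, so the attachment set is nonempty.

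I expect the main obstacle to be the elimination of the odd-triangle case: one must correctly single out which even triangle carries $p=2$ and then rule out each way the appended vertex can sit inside a negative complete tripartite graph. The range-of-$M$ computation, although it is the heart of the structural conclusion, is routine once the border-rank dichotomy is invoked; the one delicate point there is to confirm that the normalizing switching is a four-way switching, so that the gains at $v$ stay fourth roots of unity and the resulting graph is a bona fide $K(0;\ldots)$ rather than an arbitrary complex unit gain graph.
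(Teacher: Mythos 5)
Your proposal is correct, and while it follows the paper's overall skeleton (invoke Lemma \ref{lem1.9}, exclude the $\overrightarrow{C_3}(t_1,t_2,t_3)$ alternative, then force $v$ to attach to whole partition classes with one common gain), it executes both nontrivial steps by genuinely different means. For the exclusion step the paper argues via cycle values: $\widetilde{F_2}$ contains an odd triangle inherited from $\widetilde{F_1}$, whereas Lemma \ref{ev} would force $T_{\widetilde{F_2}}$ to be an even triangle, and blow-ups of even triangles contain only even triangles; you instead classify even triangles by inertia (only the negative one has $p=2$) and derive a contradiction from the twin-class structure of $\widetilde{F_2}-v$ (rank $3$ if $v$ has a twin, bipartite underlying graph if not) --- both are sound, yours additionally needing Lemma \ref{add} and the fact that switching preserves underlying graphs. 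For the structural step, the paper proves two separate Facts by contradiction, each by exhibiting a $(k+2)$-vertex induced subgraph and showing through explicit congruence transformations that its rank would be $rk(\widetilde{F_1})+2$; you replace both with a single global argument: a rank increase of exactly $1$ forces $\beta\in\operatorname{range}\bigl(H(K_{n_1',\ldots,n_k'})\bigr)$, and that range is precisely the space of class-constant vectors. This is shorter, yields Facts 1 and 2 simultaneously, and makes the mechanism transparent; moreover the border-rank dichotomy you invoke is the same fact the paper itself uses implicitly (e.g. in Lemma \ref{lem3.3} and in Case 3 of Claim 3), so no new tool is assumed. Two quibbles, neither fatal: your normalized $\widetilde{F_2}$ is not literally a ``genuine mixed graph,'' since the diagonal conjugation can produce gain $-1$ on edges at $v$ --- it is a complex unit gain graph with gains among $\{\pm1,\pm i\}$, which is exactly why the lemma's conclusion (and the paper's own proof) is phrased in terms of the gain graph $K(0;\ldots)$ rather than a mixed graph; and your identification of the column space needs $k\geq2$, which does hold because a connected graph with $p(\widetilde{F_1})=1$ has at least one edge.
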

\begin{proof}
Since $p(\widetilde{F_1})=1$, by Lemma \ref{lem1.9}, we have $\widetilde{F_1}$ is switching equivalent to a complete multipartite graph or some $\overrightarrow{C_3}(t_1, t_2, t_3)$. In view of Lemma \ref{rank}, we have $rk(\overrightarrow{C_3}(t_1, t_2, t_3))=2$. If $\widetilde{F_1}$ is switching equivalent to some $\overrightarrow{C_3}(t_1, t_2, t_3)$, then combining $rk(\overrightarrow{C_3}(t_1, t_2, t_3))=2$ and $rk(\widetilde{F_2})=rk(\widetilde{F_1})+1$, we have $rk(\widetilde{F_2})=3$. By Lemma \ref{ev}, we have $T_{\widetilde{F_2}}$ is an even triangle. Since $\widetilde{F_1}$ is a subgraph of $\widetilde{F_2}$, hence $\widetilde{F_2}$ contains an odd triangle. By Lemma \ref{si}, odd triangle is not equivalent to even triangle. Thus, we obtain $T_{\widetilde{F_2}}$ is not an even triangle, a contradiction. Hence, we have $\widetilde{F_1}$ is switching equivalent to a complete multipartite graph.

Without loss of generality, assume that $\widetilde{F_1}$ is switching equivalent to $K_{n_1', n_2',\ldots, n_k'}$. Then $\widetilde{F_2}$ is switching equivalent to a complex unit gain graph $F_2^{\varphi}$ with $F_2^{\varphi}-v=K_{n_1', n_2',\ldots, n_k'}$, where $\varphi(\overrightarrow{E(F_2)})\subseteq\{\pm1, \pm i\}$. Now we consider $F_2^{\varphi}$. Let $V_1, V_2, \ldots, V_k$ be the partition classes of $V(K_{n_1', n_2',\ldots, n_k'})$ such that $|V_j|=n_j'$ for $j=1, 2, \ldots, k$. Since $rk(\widetilde{F_2})=rk(\widetilde{F_1})+1$, we have $v$ is not an isolated vertex. In the following, we will prove the following two facts.

\begin{fact}
Let $1\leq j\leq k$. If there exists one edge connecting $v$ and a vertex of $V_j$ in $F_2$, then $v$ is adjacent to all vertices of $V_j$ in $F_2$.
\end{fact}
\noindent{\bf Proof of Fact 1} Suppose on the contrary, without loss of generality, let ${u, w}\in V_j$ such that $vu\in E(F_2)$ and $vw\notin E(F_2)$. Without loss of generality, choose arbitrary vertex $v_t\in V_t$ for $1\leq t\leq k, t\neq j$. The Hermitian adjacency matrix of the complex unit gain graph induced by vertex subset $$\{v, u, w\}\bigcup\Big(\bigcup_{1\leq t\leq k, t\neq j}\{v_t\}\Big)$$ of $F_2^{\varphi}$ can be written as
\begin{align*}
N={\left(
        \begin{array}{cccc}
        0 & 0 & \epsilon & \eta^{*} \\[3pt]
        0 & 0 & 0 & \mathbf{1}^{\top} \\[3pt]
        \overline{\epsilon} & 0 & 0 & \mathbf{1}^{\top} \\[3pt]
        \eta & \mathbf{1} & \mathbf{1} & \mathbf{J}_{k-1}-\mathbf{I}_{k-1}
        \end{array}
        \right)},
\end{align*}
where $\epsilon\in\{\pm1, \pm i\}$. Set
\begin{align*}
Q_1={\left(
        \begin{array}{cccc}
        1 & 0 & 0 & \mathbf{0} \\[3pt]
        0 & 1 & 0 & \mathbf{0} \\[3pt]
        0 & -1 & 1 & \mathbf{0} \\[3pt]
        \mathbf{0} & \mathbf{0} & \mathbf{0} & \mathbf{I}_{k-1}
        \end{array}
        \right)},
\end{align*}
then $Q_1^{*}NQ_1$ is the following matrix:
\begin{align*}
{\left(
        \begin{array}{cccc}
        0 & -\epsilon & \epsilon & \eta^{*} \\[3pt]
        -\overline{\epsilon} & 0 & 0 & \mathbf{0} \\[3pt]
        \overline{\epsilon} & 0 & 0 & \mathbf{1}^{\top} \\[3pt]
        \eta & \mathbf{0} & \mathbf{1} & \mathbf{J}_{k-1}-\mathbf{I}_{k-1}
        \end{array}
        \right)}.
\end{align*}
Note that the equation
\begin{align*}
\left(
        \begin{array}{cc}
        0 & \mathbf{1}^{\top} \\[3pt]
        \mathbf{1} & \mathbf{J}_{k-1}-\mathbf{I}_{k-1}
        \end{array}
        \right)X
        =\left(\begin{array}{c}
       \overline{\epsilon} \\
       \eta \\
       \end{array}
       \right)
\end{align*}
has a solution, say $\left(\begin{array}{c}
       \mu \\
       \vartheta \\
       \end{array}
       \right).$
Let
\begin{align*}
Q_2={\left(
        \begin{array}{cccc}
        1 & 0 & 0 & \mathbf{0} \\[3pt]
        0 & 1 & 0 & \mathbf{0} \\[3pt]
        -\mu & 0 & 1 & \mathbf{0} \\[3pt]
        -\vartheta & \mathbf{0} & \mathbf{0} & \mathbf{I}_{k-1}
        \end{array}
        \right)}.
\end{align*}
Then $Q_2^*Q_1^*NQ_1Q_2$ is the following matrix:
\begin{align*}
{\left(
        \begin{array}{cccc}
        -\epsilon\mu-\eta^{*}\vartheta & -\epsilon & 0 & \mathbf{0} \\[3pt]
        -\overline{\epsilon} & 0 & 0 & \mathbf{0} \\[3pt]
        0 & 0 & 0 & \mathbf{1}^{\top} \\[3pt]
        \mathbf{0}& \mathbf{0} & \mathbf{1} & \mathbf{J}_{k-1}-\mathbf{I}_{k-1}
        \end{array}
        \right)}.
\end{align*}
Thus
\begin{align*}
rank(N)&=rank(Q_2^*Q_1^*NQ_1Q_2)\\
&=rank{\left(
        \begin{array}{cc}
        -\epsilon\mu-\eta^{*}\vartheta & -\epsilon \\[3pt]
        -\overline{\epsilon} & 0
        \end{array}
        \right)}+rank{\left(
        \begin{array}{cc}
        0 & \mathbf{1}^{\top} \\[3pt]
        \mathbf{1} & \mathbf{J}_{k-1}-\mathbf{I}_{k-1}
        \end{array}
        \right)}\\
&=2+k\\
&=2+rk(\widetilde{F}_1).
\end{align*}
Since $rk(\widetilde{F}_2)=rk(F_2^{\varphi})\geq rank(N)$, we have $rk(\widetilde{F}_2)\geq 2+rk(\widetilde{F}_1)$, a contradiction.\qed
\begin{fact}
$\forall 1\leq j\leq k$, $A(F_2^{\varphi})_{vu}=A(F_2^{\varphi})_{vw}$ for all $\{u, w\}\subseteq V_j$.
\end{fact}
\noindent{\bf Proof of Fact 2} Let $1\leq j\leq k$. If there are no edges between $v$ and $V_j$, then it is easy to see that $A(F_2^{\varphi})_{vu}=A(F_2^{\varphi})_{vw}=0$ for all $\{u, w\}\subseteq V_j$. If there are edges between $v$ and $V_j$, then by $\mathbf{Fact \ 1}$, we have $vu$ and $vw$ are edges of $F_2$ for all $\{u, w\}\subseteq V_j$. Suppose $A(F_2^{\varphi})_{vu}\neq A(F_2^{\varphi})_{vw}$ for some $\{u, w\}\subseteq V_j$. Without loss of generality, choose arbitrary vertex $v_t\in V_t$ for $1\leq t\leq k$ and $t\neq j$. The Hermitian adjacency matrix of the complex unit gain graph induced by vertex subset
$$\{v, u, w\}\bigcup(\bigcup_{1\leq t\leq k, t\neq j}\{v_t\})$$
of $F_2^{\varphi}$ can be written as
\begin{align*}
M={\left(
        \begin{array}{cccc}
        0 & A(F_2^{\varphi})_{vu} & A(F_2^{\varphi})_{vw} & \zeta^{*} \\[3pt]
        \overline{A(F_2^{\varphi})_{vu}} & 0 & 0 & \mathbf{1}^{\top} \\[3pt]
        \overline{A(F_2^{\varphi})_{vw}} & 0 & 0 & \mathbf{1}^{\top} \\[3pt]
        \zeta & \mathbf{1} & \mathbf{1} & \mathbf{J}_{k-1}-\mathbf{I}_{k-1}
        \end{array}
        \right)}.
\end{align*}
Set
\begin{align*}
Q_3={\left(
        \begin{array}{cccc}
        1 & 0 & 0 & \mathbf{0} \\[3pt]
        0 & 1 & 0 & \mathbf{0} \\[3pt]
        0 & -1 & 1 & \mathbf{0} \\[3pt]
        \mathbf{0} & \mathbf{0} & \mathbf{0} & \mathbf{I}_{k-1}
        \end{array}
        \right)},
\end{align*}
then $Q_3^{*}MQ_3$ is the following matrix:
\begin{align*}
{\left(
        \begin{array}{cccc}
        0 & A(F_2^{\varphi})_{vu}-A(F_2^{\varphi})_{vw} & A(F_2^{\varphi})_{vw} & \zeta^{*} \\[3pt]
        \overline{A(F_2^{\varphi})_{vu}}-\overline{A(F_2^{\varphi})_{vw}} & 0 & 0 & \mathbf{0} \\[3pt]
        \overline{A(F_2^{\varphi})_{vw}} & 0 & 0 & \mathbf{1}^{\top} \\[3pt]
        \zeta & \mathbf{0} & \mathbf{1} & \mathbf{J}_{k-1}-\mathbf{I}_{k-1}
        \end{array}
        \right)}.
\end{align*}
Note that the equation
\begin{align*}
\left(
        \begin{array}{cc}
        0 & \mathbf{1}^{\top} \\[3pt]
        \mathbf{1} & \mathbf{J}_{k-1}-\mathbf{I}_{k-1}
        \end{array}
        \right)X
        =\left(\begin{array}{c}
       \overline{A(F_2^{\varphi})_{vw}} \\
       \zeta \\
       \end{array}
       \right)
\end{align*}
has a solution, say $\left(\begin{array}{c}
       \nu \\
       \delta \\
       \end{array}
       \right).$
Let
\begin{align*}
Q_4={\left(
        \begin{array}{cccc}
        1 & 0 & 0 & \mathbf{0} \\[3pt]
        0 & 1 & 0 & \mathbf{0} \\[3pt]
        -\nu & 0 & 1 & \mathbf{0} \\[3pt]
        -\delta & \mathbf{0} & \mathbf{0} & \mathbf{I}_{k-1}
        \end{array}
        \right)}.
\end{align*}
Then $Q_4^*Q_3^*NQ_3Q_4$ is the following matrix:
\begin{align*}
{\left(
        \begin{array}{cccc}
        -\nu A(F_2^{\varphi})_{vw}-\zeta^{*}\delta & A(F_2^{\varphi})_{vu}-A(F_2^{\varphi})_{vw} & 0 & \mathbf{0} \\[3pt]
        \overline{A(F_2^{\varphi})_{vu}}-\overline{A(F_2^{\varphi})_{vw}} & 0 & 0 & \mathbf{0} \\[3pt]
        0 & 0 & 0 & \mathbf{1}^{\top} \\[3pt]
        \mathbf{0}& \mathbf{0} & \mathbf{1} & \mathbf{J}_{k-1}-\mathbf{I}_{k-1}
        \end{array}
        \right)}.
\end{align*}
Since $A(F_2^{\varphi})_{vu}\neq A(F_2^{\varphi})_{vw}$, we obtain
\begin{align*}
rank(M)&=rank(Q_4^*Q_3^*MQ_3Q_4)\\
&=rank{\left(
        \begin{array}{cc}
        -\nu A(F_2^{\varphi})_{vw}-\zeta^{*}\delta & A(F_2^{\varphi})_{vu}-A(F_2^{\varphi})_{vw} \\[3pt]
        \overline{A(F_2^{\varphi})_{vu}}-\overline{A(F_2^{\varphi})_{vw}} & 0
        \end{array}
        \right)}+rank{\left(
        \begin{array}{cc}
        0 & \mathbf{1}^{\top} \\[3pt]
        \mathbf{1} & \mathbf{J}_{k-1}-\mathbf{I}_{k-1}
        \end{array}
        \right)}\\
&=2+k\\
&=2+rk(\widetilde{F_1}).
\end{align*}
Since $rk(\widetilde{F_2})=rk(F_2^{\varphi})\geq rk(M)$, we have $rk(\widetilde{F_2})\geq 2+rk(\widetilde{F_1})$, a contradiction. This completes the proof of Fact 2.\qed

Combining Facts 1 and 2, we have $\widetilde{F_2}$ is switching equivalent to some complex unit gain graph
$$K(0; n_1'', n_2'',\ldots, n_k''; a_*^i, b_*^{-i}, c_*^{1}, d_*^{-1}, k-a_*-b_*-c_*-d_*).$$
\end{proof}
\subsection{\normalsize Proof for Theorem \ref{thm1.2}}
Now, we are ready to prove the second main result of this paper.

\noindent{\bf Proof of Theorem \ref{thm1.2}}\ \ \textit{``Sufficiency''.} If $\widetilde{G}$ is a mixed graph switching equivalent to a mixed graph satisfying (i), then by Lemmas \ref{lem1.9} and \ref{lem3.2}, the result is naturally obtained. If $\widetilde{G}$ is a mixed graph switching equivalent to a mixed graph satisfying (ii), then by Lemma \ref{twin}, $T_{\widetilde{G}}$ is switching equivalent to $K(r; k; p)$ satisfying one of the three conditions of Corollary \ref{cor}. By Corollary \ref{cor}, we have $p(K(r; k; p))=2$. In view of Lemmas \ref{twin} and \ref{add}, we have $p(\widetilde{G})=p(T_{\widetilde{G}})=p(K(r; k; p))=2$. If $\widetilde{G}$ is a mixed graph switching equivalent to a mixed graph satisfying (iii), then by Lemma \ref{twin}, $T_{\widetilde{G}}$ is switching equivalent to $K(2; k; 1^i, 1^{-i}, 0^{1}, 0^{-1}, s)$ satisfying the condition (2) of Lemma \ref{3.8}. By Lemma \ref{3.8}, we have $p(K(2; k; 1^i, 1^{-i}, 0^{1}, 0^{-1}, s)=2$. In view of Lemmas \ref{twin} and \ref{add}, we have $p(\widetilde{G})=p(T_{\widetilde{G}})=p(K(r; k; a^i, b^{-i}, 0^{1}, 0^{-1}, s)=2$. If $\widetilde{G}$ is a mixed graph switching equivalent to a mixed graph satisfying (iv), then by Lemma \ref{twin}, $T_{\widetilde{G}}$ is switching equivalent to $K(r; k; a^i, 0^{-i}, c^{1}, 0^{-1}, s)$ satisfying one of the conditions (2)-(10) of Lemma \ref{3.10}. By Lemma \ref{3.10}, we have $p(K(r; k; a^i, 0^{-i}, c^{1}, 0^{-1}, s))=2$. In view of Lemmas \ref{twin} and \ref{add}, we have $p(\widetilde{G})=p(T_{\widetilde{G}})=p(K(r; k; a^i, 0^{-i}, c^{1}, 0^{-1}, s))=2$.

\textit{``Necessity''.} We proceed by establishing some claims.
\begin{claim}
$\widetilde{G}-v$ has exactly two components.
\end{claim}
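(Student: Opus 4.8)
The plan is to bound the number $l$ of components of $\widetilde{G}-v$ from above by $2$; since $v$ is a cut vertex we already have $l\geq 2$, so it suffices to prove $l\leq 2$. Write $\widetilde{G_1},\ldots,\widetilde{G_l}$ for these components. The driving observation is that deleting a single vertex drops the positive inertia index by at most one: by Lemma \ref{leml} we have $p(\widetilde{G}-v)\leq p(\widetilde{G})=2$. Since $\widetilde{G}-v=\bigcup_{j=1}^{l}\widetilde{G_j}$ is a disjoint union, its positive inertia index is additive, so $\sum_{j=1}^{l}p(\widetilde{G_j})=p(\widetilde{G}-v)\leq 2$.

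The step I expect to carry the real content is showing that each component satisfies $p(\widetilde{G_j})\geq 1$; this is exactly where the no-pendant-vertex hypothesis enters. First I would argue that no $\widetilde{G_j}$ contains an isolated vertex: if $u\in V(\widetilde{G_j})$ had no neighbor inside $\widetilde{G_j}$, then because the components of $\widetilde{G}-v$ carry no edges between them, the only possible neighbor of $u$ in $\widetilde{G}$ is $v$. Connectedness of $\widetilde{G}$ then forces $uv$ to be an edge, and $u$ would be a pendant vertex of $\widetilde{G}$, contradicting the hypothesis. Hence every $\widetilde{G_j}$ has at least one edge. A mixed graph with an edge has a nonzero, traceless Hermitian adjacency matrix, so it has both a positive and a negative eigenvalue; thus $p(\widetilde{G_j})\geq 1$.

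Combining the two bounds gives $l\leq\sum_{j=1}^{l}p(\widetilde{G_j})\leq 2$, and together with $l\geq 2$ this yields $l=2$, proving the claim. The only delicate point is the reduction from ``no pendant vertices in $\widetilde{G}$'' to ``no isolated vertices in any $\widetilde{G_j}$,'' which must be stated carefully using that a cut vertex leaves no cross-edges between distinct components. As an alternative to invoking Lemma \ref{leml} directly, one could split $\widetilde{G}=\widetilde{M_1}\bullet v\bullet\widetilde{M_2}$ with $\widetilde{M_1}$ the subgraph induced on $V(\widetilde{G_1})\cup\{v\}$ and $\widetilde{M_2}$ that on $\big(\bigcup_{j\geq 2}V(\widetilde{G_j})\big)\cup\{v\}$, and apply the lower bound of Lemma \ref{lem3.3}, namely $p(\widetilde{M_1}-v)+p(\widetilde{M_2}-v)\leq p(\widetilde{G})$, which again delivers $\sum_{j}p(\widetilde{G_j})\leq 2$; I would mention this route but favor the shorter one.
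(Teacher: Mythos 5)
Your proposal is correct and follows essentially the same route as the paper: lower-bound the number of components by $2$ via the cut-vertex hypothesis, show each component $\widetilde{G_j}$ has an edge (hence $p(\widetilde{G_j})\geq 1$) using the absence of pendant vertices, and conclude from $2=p(\widetilde{G})\geq p(\widetilde{G}-v)=\sum_j p(\widetilde{G_j})$ via Lemma \ref{leml}. The paper states the ``no isolated vertex in a component'' and ``edge implies a positive eigenvalue'' steps without proof; your explicit justifications of both are accurate fill-ins rather than a different argument.
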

\noindent{\bf Proof of Claim 1.}\
Suppose that $\widetilde{G}-v=\widetilde{G_1}\cup\widetilde{G_2}\cup\ldots\cup\widetilde{G_t}$ is the disjoint union of different components of
$\widetilde{G}-v$. Since $v$ is a cut vertex of $\widetilde{G}$, we have $t\geq2$. Since $\widetilde{G}$ has no pendant vertices, we have $\widetilde{G_i}$ has at least one edge for all $i=1, 2,\ldots, t$. Hence, $p(\widetilde{G_i})\geq1$ for all $i=1, 2,\ldots, t$. From $2=p(\widetilde{G})\geq p(\widetilde{G}-v)=p(\widetilde{G}_1)+\ldots+p(\widetilde{G}_t)$ it follows that $t=2$. Hence, $\widetilde{G}-v$ has exactly two components $\widetilde{G_1}$ and $\widetilde{G_2}$.
\qed
\begin{claim}
$p(\widetilde{G_i})=1$ for each $i=1, 2$.
\end{claim}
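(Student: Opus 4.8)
The plan is to leverage the quantitative bounds already assembled in the proof of Claim 1, turning the squeeze that produced $t=2$ into a squeeze on the individual $p(\widetilde{G_i})$. First I would record that, by Claim 1, $\widetilde{G}-v$ is the disjoint union $\widetilde{G_1}\cup\widetilde{G_2}$; consequently its Hermitian adjacency matrix $H(\widetilde{G}-v)$ is block diagonal with diagonal blocks $H(\widetilde{G_1})$ and $H(\widetilde{G_2})$, so that the positive inertia index is additive over the two components, namely $p(\widetilde{G}-v)=p(\widetilde{G_1})+p(\widetilde{G_2})$.

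Next I would establish the two lower bounds and the single upper bound that pin each summand down. Since $\widetilde{G}$ has no pendant vertices, each component $\widetilde{G_i}$ contains at least one edge, and any mixed graph with an edge has positive inertia index at least $1$; hence $p(\widetilde{G_i})\geq 1$ for $i=1,2$. On the other side, applying Lemma~\ref{leml} with the deleted vertex $u=v$ gives $p(\widetilde{G}-v)\leq p(\widetilde{G})=2$. Combining this upper bound with the additivity from the previous paragraph yields $p(\widetilde{G_1})+p(\widetilde{G_2})\leq 2$. Together with $p(\widetilde{G_1}),p(\widetilde{G_2})\geq 1$, the only admissible solution is $p(\widetilde{G_1})=p(\widetilde{G_2})=1$, which is exactly Claim~2.

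I do not expect a genuine obstacle here: the statement follows by trapping each $p(\widetilde{G_i})$ between the lower bound forced by the absence of pendant vertices and the global upper bound supplied by the vertex-deletion interlacing inequality. The only two points meriting a sentence of care are the additivity of $p$ over connected components (immediate from the block-diagonal form of $H(\widetilde{G}-v)$, so that the positive eigenvalues of $H(\widetilde{G}-v)$ are precisely the union, with multiplicity, of those of $H(\widetilde{G_1})$ and $H(\widetilde{G_2})$) and the correct orientation of Lemma~\ref{leml}, which must be used as $p(\widetilde{G}-v)\leq p(\widetilde{G})$ rather than the reverse.
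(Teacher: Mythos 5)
Your proof is correct and is essentially the paper's own argument: the paper likewise squeezes $2=p(\widetilde{G})\geq p(\widetilde{G}-v)=p(\widetilde{G_1})+p(\widetilde{G_2})\geq 2$, using Lemma \ref{leml} for the upper bound, block-diagonal additivity over the two components, and the fact that each $\widetilde{G_i}$ contains an edge (no pendant vertices) for the lower bounds. Your write-up merely makes explicit the additivity and interlacing steps that the paper leaves implicit.
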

\noindent{\bf Proof of Claim 2.}\
Noting that $p(\widetilde{G_1})\geq1$ and $p(\widetilde{G_2})\geq1$, then from
\begin{align*}
2=p(\widetilde{G})\geq p(\widetilde{G_1})+p(\widetilde{G_2})\geq2
\end{align*}
we have $p(\widetilde{G_i})=1$ for each $i=1, 2$.
\qed

By Claim 2 and Lemma \ref{lem1.9}, we have $\widetilde{G_1}$ (resp. $\widetilde{G_2}$) is switching equivalent to a complete multipartite graph or some $\overrightarrow{C_3}(t_1, t_2, t_3)$. Suppose, without loss of generality, that $p(\widetilde{G_1}+v)\leq p(\widetilde{G_2}+v)$.
\begin{claim}
$p(\widetilde{G_1}+v)=1$.
\end{claim}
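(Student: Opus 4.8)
The plan is to trap $p(\widetilde{G_1}+v)$ between $1$ and $2$ and then rule out the value $2$ by examining how the $H$-rank changes when the cut vertex $v$ is adjoined to each component. First I would apply Lemma~\ref{leml} with $u=v$ to the induced subgraph $\widetilde{G_1}+v$: since $\widetilde{G_1}=(\widetilde{G_1}+v)-v$ and $p(\widetilde{G_1})=1$ by Claim~2, this yields $1=p(\widetilde{G_1})\le p(\widetilde{G_1}+v)\le p(\widetilde{G_1})+1=2$, so it suffices to exclude $p(\widetilde{G_1}+v)=2$.

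Assume for contradiction that $p(\widetilde{G_1}+v)=2$. Combining the standing assumption $p(\widetilde{G_1}+v)\le p(\widetilde{G_2}+v)$ with the same interlacing bound $p(\widetilde{G_2}+v)\le p(\widetilde{G_2})+1=2$, I would first conclude $p(\widetilde{G_2}+v)=2$ as well. Thus for each $i\in\{1,2\}$ the positive index strictly increases from $\widetilde{G_i}$ to $\widetilde{G_i}+v$, so by monotonicity of $p$ and $n$ (Lemma~\ref{leml}) the $H$-rank increases by either $1$ or $2$. If $rk(\widetilde{G_i}+v)=rk(\widetilde{G_i})+2$ for some $i$, then Lemma~\ref{lem1.6}(1) applies and gives $p(\widetilde{G})=p(\widetilde{G}-v)+1=p(\widetilde{G_1})+p(\widetilde{G_2})+1=3$, contradicting $p(\widetilde{G})=2$. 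Hence $rk(\widetilde{G_i}+v)=rk(\widetilde{G_i})+1$ for both $i$ (the rank cannot stay fixed, since $p$ has increased).

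The remaining configuration, both ranks increasing by exactly one, is the only case Lemma~\ref{lem1.6} does not settle directly, and this is where the congruence reduction from the proof of Lemma~\ref{lem3.3} is needed. Writing $H_i=H(\widetilde{G_i})$ and letting $\alpha_i$ record the edges between $v$ and $\widetilde{G_i}$, the rank-one jump forces $\alpha_i\in\mathrm{col}(H_i)$, so $\alpha_i=H_ix_i$ for some $x_i$. Bordering $H_i$ and clearing $\alpha_i$ by the congruence $\bigl(\begin{smallmatrix} I & -x_i\\ 0 & 1\end{smallmatrix}\bigr)$ shows that $p(\widetilde{G_i}+v)=p(H_i)+1$ precisely when the resulting diagonal entry $-x_i^*H_ix_i$ is positive; since $p(\widetilde{G_i}+v)=2=p(H_i)+1$, I obtain $-x_i^*H_ix_i>0$ for $i=1,2$. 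Finally I would apply the block congruence $S$ from the proof of Lemma~\ref{lem3.3} to $H(\widetilde{G})$: the diagonal entry at $v$ becomes $-x_1^*H_1x_1-x_2^*H_2x_2>0$, so by Sylvester's law (Lemma~\ref{lem1.4}) $p(\widetilde{G})=p(H_1)+p(H_2)+1=3$, again contradicting $p(\widetilde{G})=2$. This eliminates $p(\widetilde{G_1}+v)=2$ and establishes the claim. The main obstacle is precisely recognizing that the two sub-cases left open by Lemma~\ref{lem1.6} must be resolved by directly diagonalizing the bordered block matrix rather than by an inertia-additivity formula; the sign bookkeeping on $-x_i^*H_ix_i$ is the crux.
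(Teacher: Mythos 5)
Your proof is correct and follows essentially the same route as the paper's: a case analysis on whether $rk(\widetilde{G_i}+v)-rk(\widetilde{G_i})$ is $2$, $1$, or $0$, using Lemma~\ref{lem1.6}(1) to kill the $+2$ case and the bordered congruence $\bigl(\begin{smallmatrix} I & -x_i\\ 0 & 1\end{smallmatrix}\bigr)$ (exactly as in the proof of Lemma~\ref{lem3.3}) together with Sylvester's law to handle the $+1$ case. The only difference is organizational: by arguing by contradiction from $p(\widetilde{G_1}+v)=2$ you make the rank-unchanged case impossible from the outset, whereas the paper treats it directly as its Case 2 via Lemma~\ref{lem1.6}(2); the sign analysis of $-x_i^*H_ix_i$ at the cut vertex is the same computation the paper performs with $-\tau^*X_1-\psi^*Y_1$.
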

\noindent{\bf Proof of Claim 3.}\
We distinguish the following three cases.
\begin{Case}
There exists $\widetilde{G_i}$ such that $rk(\widetilde{G_i}+v)=rk(\widetilde{G_i})+2$.
\end{Case}
By Lemma \ref{lem1.6} (1), we have $p(\widetilde{G})=p(\widetilde{G}-v)+1=p(\widetilde{G_1})+p(\widetilde{G_2})+1=3$, a contradiction.

\begin{Case}
There exists $\widetilde{G_i}$ such that $rk(\widetilde{G_i}+v)=rk(\widetilde{G_i})$.
\end{Case}
\begin{subc}
$rk(\widetilde{G_1}+v)=rk(\widetilde{G_1})$.
\end{subc}
By Lemma \ref{lem1.6} (2), we have $p(\widetilde{G})=p(\widetilde{G_1})+p(\widetilde{G}-\widetilde{G_1})=p(\widetilde{G_1})+p(\widetilde{G_2}+v)$. Hence, $1\leq p(\widetilde{G_1}+v)\leq p(\widetilde{G_2}+v)=1$. It follows that $p(\widetilde{G_1}+v)=p(\widetilde{G_2}+v)=1$.
\begin{subc}
$rk(\widetilde{G_2}+v)=rk(\widetilde{G_2})$.
\end{subc}
By Lemma \ref{lem1.6} (2), $p(\widetilde{G})=p(\widetilde{G_2})+p(\widetilde{G}-\widetilde{G_2})=p(\widetilde{G_2})+p(\widetilde{G_1}+v)$. Hence, $p(\widetilde{G_1}+v)=1$. Furthermore, from $rk(\widetilde{G_2}+v)=rk(\widetilde{G}_2)$, we have $p(\widetilde{G_2}+v)=p(\widetilde{G_2})=1$.
\begin{Case}
$rk(\widetilde{G_i}+v)=rk(\widetilde{G_i})+1$ for each $i=1, 2$.
\end{Case}
Consider the Hermitian adjacency matrix of $\widetilde{G}$:
\begin{align*}
H(\widetilde{G})={\left(
        \begin{array}{ccc}
        H(\widetilde{G_1}) & \tau & 0 \\[3pt]
        \tau^{*} & 0 & \psi^{*} \\[3pt]
        0 & \psi & H(\widetilde{G_2})
        \end{array}
        \right)}.
\end{align*}
The condition $rk(\widetilde{G_1}+v)=rk(\widetilde{G_1})+1$ implies that the equation $H(\widetilde{G_1})X=\tau$ has a solution, say $X_1$. Similarly, suppose $Y_1$ is a solution of the equation $H(\widetilde{G_2})Y=\psi$ and let
\begin{align*}
W={\left(
        \begin{array}{ccc}
        I & -X_1 & 0 \\[3pt]
        0 & 1 & 0 \\[3pt]
        0 & -Y_1 & I
        \end{array}
        \right)}.
\end{align*}
Then
\begin{align*}
W^*H(\widetilde{G})W={\left(
        \begin{array}{ccc}
        H(\widetilde{G_1}) & 0 & 0 \\[3pt]
        0 & -\tau^*X_1-\psi^*Y_1 & 0 \\[3pt]
        0 & 0 & H(\widetilde{G_2})
        \end{array}
        \right)}.
\end{align*}
From $2=p(\widetilde{G})=p(\widetilde{G_1})+p(\widetilde{G_2})$ and Lemma \ref{lem1.4}, we have $-\tau^*X_1-\psi^*Y_1\leq0$. Note that $-\tau^*X_1\neq0$ and $-\psi^*Y_1\neq0$. Hence, at least one of $-\tau^*X_1$ and $-\psi^*Y_1$ is less than 0. If $-\psi^*Y_1>0$, then $-\tau^*X_1<0$. Hence, we have $p(\widetilde{G_1}+v)=p(\widetilde{G_1})=1$. If $-\psi^*Y_1<0$, we have $p(\widetilde{G_2}+v)=p(\widetilde{G_2})=1$. Hence, by the condition of $p(\widetilde{G_1}+v)\leq p(\widetilde{G_2}+v)$, we have $p(\widetilde{G_1}+v)=1$.
\qed

Combining the proof of Cases $1\verb|--|3$ in Claim 3, we have the following observation.
\begin{Obs}\label{obs}
If $rk(\widetilde{G_i}+v)=rk(\widetilde{G_i})+1$ for each $i=1, 2$, and $-\psi^*Y_1>0$, where $\psi$ and $Y_1$ are defined in Case 3 of Claim 3, then $p(\widetilde{G_2}+v)=2$. Otherwise, $p(\widetilde{G_2}+v)=1$.
\end{Obs}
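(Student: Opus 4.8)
The plan is to determine $p(\widetilde{G_2}+v)$ by a congruence reduction of $H(\widetilde{G_2}+v)$ mirroring the one already applied to $H(\widetilde{G})$ in Case~3 of Claim~3, and then to read off the two stated outcomes from the three-case split of that claim. First I would pin down the range of the quantity: by Claim~3 we have $p(\widetilde{G_1}+v)=1$, and since $p(\widetilde{G_1}+v)\le p(\widetilde{G_2}+v)$ by our standing assumption, $p(\widetilde{G_2}+v)\ge 1$; applying Lemma~\ref{leml} to the removal of $v$ from $\widetilde{G_2}+v$ gives $p(\widetilde{G_2}+v)\le p(\widetilde{G_2})+1=2$. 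So $p(\widetilde{G_2}+v)\in\{1,2\}$, and only the decision between these two values remains.

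The key step is to show the reduction always applies. Under $p(\widetilde{G})=2$ one cannot have $rk(\widetilde{G_2}+v)=rk(\widetilde{G_2})+2$, for then Lemma~\ref{lem1.6}(1) would force $p(\widetilde{G})=p(\widetilde{G}-v)+1=p(\widetilde{G_1})+p(\widetilde{G_2})+1=3$. Hence $rk(\widetilde{G_2}+v)\le rk(\widetilde{G_2})+1$, so $H(\widetilde{G_2})Y=\psi$ is solvable and the scalar $-\psi^*Y_1$ is well defined (and real, since $\psi^*Y_1=Y_1^*H(\widetilde{G_2})Y_1$). Writing $H(\widetilde{G_2}+v)=\left(\begin{smallmatrix}0 & \psi^*\\ \psi & H(\widetilde{G_2})\end{smallmatrix}\right)$ with $v$ indexing the first row, the congruence by $\left(\begin{smallmatrix}1 & 0\\ -Y_1 & I\end{smallmatrix}\right)$—the verbatim block operation used to build $W$ in Case~3—reduces it to $\mathrm{diag}(-\psi^*Y_1,\,H(\widetilde{G_2}))$. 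By Lemma~\ref{lem1.4} and $p(\widetilde{G_2})=1$ this gives $p(\widetilde{G_2}+v)=2$ when $-\psi^*Y_1>0$ and $p(\widetilde{G_2}+v)=1$ when $-\psi^*Y_1\le 0$.

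It then remains to align this dichotomy with the two clauses. If $-\psi^*Y_1>0$, then $-\psi^*Y_1\ne 0$ forces $rk(\widetilde{G_2}+v)=rk(\widetilde{G_2})+1$, and I would check that $rk(\widetilde{G_1}+v)=rk(\widetilde{G_1})+1$ is likewise forced: $rk(\widetilde{G_1}+v)=rk(\widetilde{G_1})$ would give, via Lemma~\ref{lem1.6}(2), $p(\widetilde{G})=p(\widetilde{G_1})+p(\widetilde{G}-\widetilde{G_1})=p(\widetilde{G_1})+p(\widetilde{G_2}+v)=1+2=3$, while $rk(\widetilde{G_1}+v)=rk(\widetilde{G_1})+2$ gives $p(\widetilde{G})=3$ by Lemma~\ref{lem1.6}(1), both contradicting $p(\widetilde{G})=2$. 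Thus $-\psi^*Y_1>0$ is exactly the situation of the first clause and yields $p(\widetilde{G_2}+v)=2$. For the ``otherwise'' clause, Case~1 is impossible (it already forces $p(\widetilde{G})=3$), Case~2 gives $p(\widetilde{G_2}+v)=1$ by its two subcase computations, and Case~3 with $-\psi^*Y_1<0$ gives $p(\widetilde{G_2}+v)=p(\widetilde{G_2})=1$; these exhaust every remaining possibility, so the value is $1$.

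The part I expect to demand the most care is not the congruence—which is a direct specialization of the computation already performed for $W^*H(\widetilde{G})W$—but the bookkeeping that reconciles the three-way rank trichotomy of Claim~3 with the two-valued statement. In particular, one must confirm that $-\psi^*Y_1$ is meaningfully defined in all relevant situations (which is precisely why establishing $rk(\widetilde{G_2}+v)\le rk(\widetilde{G_2})+1$ unconditionally is the crux) and that the three cases, together with their subcases, partition all possibilities so that neither outcome is miscounted.
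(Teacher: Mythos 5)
Your proposal is correct and takes essentially the same route as the paper: the paper obtains the Observation by simply combining the case analysis of Cases 1--3 in the proof of Claim 3, which is exactly what you do, including the same congruence reduction (your explicit reduction of $H(\widetilde{G_2}+v)$ to $\mathrm{diag}(-\psi^*Y_1,\,H(\widetilde{G_2}))$ is the local version of the paper's computation with $W$) and the same appeals to Lemmas \ref{lem1.4}, \ref{leml} and \ref{lem1.6}. Your additional bookkeeping---well-definedness of $-\psi^*Y_1$ via solvability of $H(\widetilde{G_2})Y=\psi$, and the check that $-\psi^*Y_1>0$ forces the rank hypotheses of the first clause---only makes explicit what the paper leaves implicit.
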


If $rk(\widetilde{G_i}+v)=rk(\widetilde{G_i})$ for each $i=1, 2$, then combining Claim 3 and Observation \ref{obs}, we have $p(\widetilde{G_1}+v)=p(\widetilde{G_2}+v)=1$. In view of Lemma \ref{lem1.9}, we obtain $\widetilde{G_1}+v$ (resp. $\widetilde{G_2}+v$) is switching equivalent to complete multipartite graph or some $\overrightarrow{C_3}(t_1, t_2, t_3)$. Furthermore, if $\widetilde{G_1}+v$ (resp. $\widetilde{G_2}+v$) is switching equivalent to complete multipartite graph, then $v$ is in the partition class containing at least two vertices of $V(G_1+v)$ (resp. $V(G_2+v)$).

If $rk(\widetilde{G_i}+v)=rk(\widetilde{G_i})$ and $rk(\widetilde{G_j}+v)=rk(\widetilde{G_j})+1$ for $i\neq j$. Without loss of generality, assume $rk(\widetilde{G_1}+v)=rk(\widetilde{G_1})$ and $rk(\widetilde{G_2}+v)=rk(\widetilde{G_2})+1$. By Claim 3 and Observation \ref{obs}, we have $p(\widetilde{G_1}+v)=p(\widetilde{G_2}+v)=1$. In view of Lemma \ref{lem1.9}, we obtain $\widetilde{G_1}+v$ is switching equivalent to complete multipartite graph or some $\overrightarrow{C_3}(t_1, t_2, t_3)$. Furthermore, if $\widetilde{G_1}+v$ is switching equivalent to complete multipartite graph, then by $rk(\widetilde{G_1}+v)=rk(\widetilde{G_1})$, we have $v$ is in the partition class containing at least two vertices of $V(G_1+v)$.
Combining $rk(\widetilde{G_2}+v)=rk(\widetilde{G_2})+1$ and Lemma \ref{lem1.9}, we obtain $\widetilde{G_2}+v$ is switching equivalent to some complete $k'$-partite graph satisfying $k'\geq3$ and $v$ is in the partition class containing exactly one vertex of $V(G_2+v)$.

If $rk(\widetilde{G_i}+v)=rk(\widetilde{G_i})+1$ for each $i=1, 2$, and $-\psi^*Y_1<0$, then by Claim 3 and Observation \ref{obs}, we have $p(\widetilde{G_1}+v)=p(\widetilde{G_2}+v)=1$. Combining Lemma \ref{lem1.9} and $rk(\widetilde{G_i}+v)=rk(\widetilde{G_i})+1$ for each $i=1, 2$, we have $\widetilde{G_1}+v$ (resp. $\widetilde{G_2}+v$) is switching equivalent to some complete $k_1$-partite (resp. $k_2$-partite) graph satisfying $k_1\geq3$ (resp. $k_2\geq3$) and $v$ is in the partition class containing exactly one vertex of $V(G_1+v)$ (resp. $V(G_2+v)$).

If $rk(\widetilde{G_i}+v)=rk(\widetilde{G_i})+1$ for each $i=1, 2$, and $-\psi^*Y_1>0$, then by Claim 3 and Observation \ref{obs}, we have $p(\widetilde{G_1}+v)=1$ and $p(\widetilde{G_2}+v)=2$. Combining Lemma \ref{lem1.9} and $rk(\widetilde{G_1}+v)=rk(\widetilde{G_1})+1$, we have $\widetilde{G_1}+v$ is switching equivalent to some complete $(r+1)$-partite graph $K_{q_1, q_2, \ldots, q_r, 1}$ satisfying $r\geq2$ and $v$ is in the partition class containing exactly one vertex. In view of Lemmas \ref{lem1.9} and \ref{lem3.7}, we obtain $\widetilde{G_2}$ is switching equivalent to some complete $k$-partite graph $K_{n'_1, n'_2,\ldots, n'_k}$ and $\widetilde{G_2}+v$ is switching equivalent to some complex unit gain graph $K(0; n_1'', \ldots, n_k''; {a_*}^{i}, {b_*}^{-i}, {c_*}^{1}, {d_*}^{-1}, k-{a_*}-{b_*}-{c_*}-{d_*})$. Thus $\widetilde{G}$ is switching equivalent to some complex unit gain graph $K(q_1,\ldots, q_r; n_1''', \ldots, n_k'''; a^i, b^{-i}, c^{1}, d^{-1}, k-a-b-c-d)$. In the following, we will prove at least two of $a, b, c, d$ are equal to 0. Otherwise, we have $abc\neq0$, $abd\neq0$, $acd\neq0$ or $bcd\neq0$.

Let $H$ denote the Hermitian adjacency matrix of $K(q_1,\ldots, q_r; n_1''', \ldots, n_k'''; a^i, b^{-i}, c^{1}, d^{-1}, k-a-b-c-d)$. Suppose $abc\neq0$ and note that $r\geq2$, then
\begin{align*}
H_1={\left(
        \begin{array}{cccccc}
        0 & 1 & 1 & 0 & 0 & 0 \\[3pt]
        1 & 0 & 1 & 0 & 0 & 0 \\[3pt]
        1 & 1 & 0 & i & -i & 1 \\[3pt]
        0 & 0 & -i & 0 & 1 & 1 \\[3pt]
        0 & 0 & i & 1 & 0 & 1 \\[3pt]
        0 & 0 & 1 & 1 & 1 & 0 \\[3pt]
        \end{array}
        \right)}
\end{align*}
is a submatrix of $H$. By a direct calculation, we have $p(H_1)=3$. Thus, $p(\widetilde{G})=p(H)\geq p(H_1)=3$, a contradiction.
Similarly, we also have $abd=0, acd=0$  and $bcd=0$. Thus, we have at least two of $a, b, c, d$ are equal to 0.

If $d=0$, then $K(q_1,\ldots, q_r; n_1''', \ldots, n_k'''; a^i, b^{-i}, c^{1}, 0^{-1}, k-a-b-c)$ is a mixed graph. Hence, we have $\widetilde{G}$ is switching equivalent to mixed graph $K(q_1,\ldots, q_r; n_1''', \ldots, n_k'''; a^i, b^{-i}, c^{1}, 0^{-1}, k-a-b-c)$.

If $d>0$, we proceed by considering four cases.

{\bf{Case 1.}}\ $a, b, c$ are all equal to 0.

In this case, there exists a diagonal matrix $D_1$ with $(D_1)_{uu}=-1$ if $u\in V(G_1+v)$. Otherwise, $(D_1)_{uu}=1$. It is easy to see that $D_1^{-1}HD_1$ is the adjacency matrix of some simple graph $K(q_1,\ldots, q_r; n_1''', \ldots, n_k'''; d)$. Thus $\widetilde{G}$ is switching equivalent to $K(q_1,\ldots, q_r; n_1''', \ldots, n_k'''; d)$.

{\bf{Case 2.}}\ $a>0$ and $b=c=0$.

In this case, there exists a diagonal matrix $D_2$ with $(D_2)_{uu}=i$ if $u\in V(G_1+v)$. Otherwise, $(D_2)_{uu}=1$. It is easy to see that $D_2^{-1}HD_2$ is the Hermitian adjacency matrix of some mixed graph
$$K(q_1,\ldots, q_r; n_1, \ldots, n_k; d^i, 0^{-i}, a^{1}, 0^{-1}, k-a-d).$$
Thus $\widetilde{G}$ is switching equivalent to some mixed graph
$$K(q_1,\ldots, q_r; n_1, \ldots, n_k; d^i, 0^{-i}, a^{1}, 0^{-1}, k-a-d).$$

{\bf{Case 3.}}\ $b>0$ and $a=c=0$.

In this case, there exists a diagonal matrix $D_3$ with $(D_3)_{uu}=-i$ if $u\in V(G_1+v)$. Otherwise, $(D_3)_{uu}=1$. It is easy to see that $D_3^{-1}HD_3$ is the Hermitian adjacency matrix of some mixed graph
$$K(q_1,\ldots, q_r; n_1, \ldots, n_k; 0^i, d^{-i}, b^{1}, 0^{-1}, k-b-d).$$
Thus $\widetilde{G}$ is switching equivalent to some mixed graph
$$K(q_1,\ldots, q_r; n_1, \ldots, n_k; 0^i, d^{-i}, b^{1}, 0^{-1}, k-b-d).$$

{\bf{Case 4.}}\ $c>0$ and $a=b=0$.

In this case, there exists a diagonal matrix $D_4$ with $(D_4)_{uu}=-i$ if $u\in V(G_1+v)$. Otherwise, $(D_4)_{uu}=1$. It is easy to see that $D_4^{-1}HD_4$ is the Hermitian adjacency matrix of some mixed graph
$$K(q_1,\ldots, q_r; n_1, \ldots, n_k; c^i, d^{-i}, 0^{1}, 0^{-1}, k-c-d).$$
Thus $\widetilde{G}$ is switching equivalent to some mixed graph
$$K(q_1,\ldots, q_r; n_1, \ldots, n_k; c^i, d^{-i}, 0^{1}, 0^{-1}, k-c-d).$$

Since simple graphs are mixed graphs, according to the above analysis, without loss of generality, assume that $\widetilde{G}$ is switching equivalent to some mixed graph $K(q_1,\ldots, q_r; n_1, \ldots, n_k; a^i, b^{-i}, c^{1}, 0^{-1}, k-a-b-c)$, where $a, b, c$ are nonnegative integers such that at least one of them is equal to 0 and $1\leq a+b+c\leq k$.

Note that $[{v}, V(G_2)]$ is an edge cut of $K(q_1,\ldots, q_r; n_1, \ldots, n_k; a^i, b^{-i}, c^{1}, 0^{-1}, k-a-b-c)$. We can perform a sequence of two-way switching and operations of taking the converse so that $\widetilde{G}$ is switching equivalent to one of the following mixed graphs:
\begin{wst}
  \item[{\rm (i)}] $K(q_1,\ldots, q_r; n_1, \ldots, n_k; p)$;
  \item[{\rm (ii)}] $K(q_1,\ldots, q_r; n_1, \ldots, n_k; a^i, b^{-i}, 0^{1}, 0^{-1}, k-a-b)$, where $a\geq b\geq1$;
  \item[{\rm (iii)}] $K(q_1,\ldots, q_r; n_1, \ldots, n_k; a^i, 0^{-i}, c^{1}, 0^{-1}, k-a-c)$, where $a\geq c\geq1$.
\end{wst}

If $\widetilde{G}$ is switching equivalent to $K(q_1,\ldots, q_r; n_1, \ldots, n_k; p)$, then according to Lemmas \ref{twin}, \ref{add} and Corollary \ref{cor}, $T_{\widetilde{G}}$ is switching equivalent to $K(r; k; p)$, where $r\geq2, k\geq2, p\geq1$, satisfying one of the following conditions:
\begin{wst}
  \item[{\rm (1)}] $p=1$;
  \item[{\rm (2)}] $p\geq2$ and $k-p\leq1$;
  \item[{\rm (3)}] $p\geq2, k-p\geq2$ and $\frac{1}{r}+\frac{1}{p}+\frac{1}{k-p-1}\geq1$.
\end{wst}
By the definition of twin reduction graph and $G$ without pendant vertices, it is easy to see that $\widetilde{G}$ is switching equivalent to $K(q_1,\ldots, q_r; n_1, \ldots, n_k; p)$, where $r\geq2, k\geq2, p\geq1$, satisfying one of the conditions stated in Theorem \ref{thm1.2} ``only if " part (ii).

If $\widetilde{G}$ is switching equivalent to $K(q_1,\ldots, q_r; n_1, \ldots, n_k; a^i, b^{-i}, 0^{1}, 0^{-1}, k-a-b)$, where $a\geq b\geq1$, then according to Lemmas \ref{twin}, \ref{add} and \ref{3.8}, $T_{\widetilde{G}}$ is switching equivalent to $K(r; k; a^i, b^{-i}, 0^{1}, 0^{-1}, s)$, where $r\geq2, k\geq2, a\geq b\geq1, s=k-a-b$, satisfying $a=b=1$ and $r=2$. By the definition of twin reduction graph and $G$ without pendant vertices, it is easy to see that $\widetilde{G}$ is switching equivalent to
$$K(q_1,\ldots, q_r; n_1, \ldots, n_k; a^i, b^{-i}, 0^{1}, 0^{-1}, s)$$
where $r\geq2, k\geq2, a\geq b\geq1, s=k-a-b,$
satisfying $a=b=1$ and $r=2$.

If $\widetilde{G}$ is switching equivalent to $K(q_1,\ldots, q_r; n_1, \ldots, n_k; a^i, 0^{-i}, c^{1}, 0^{-1}, k-a-c)$, where $a\geq c\geq1$, then according to Lemmas \ref{twin}, \ref{add} and \ref{3.10}, $T_{\widetilde{G}}$ is switching equivalent to $K(r; k; a^i, 0^{-i}, c^{1}, 0^{-1}, s)$, with $r\geq2, k\geq2, a\geq c\geq1,  s=k-a-c$, satisfying one of the following conditions:
\begin{wst}
  \item[{\rm (1)}] $a=c=1$ and either $s=0$ or $s=1$;
  \item[{\rm (2)}] $a=c=1, s=2$ and either $r=3$ or $r=4$;
  \item[{\rm (3)}] $a=c=1$, $s=3$ and $r=3$;
  \item[{\rm (4)}] $a=c=1$, $s\geq2$ and $r=2$;
  \item[{\rm (5)}] $a=c=2, s=0$ and $2\leq r\leq4$;
  \item[{\rm (6)}] $a=c=2, s=1$ and $r=2$;
  \item[{\rm (7)}] $a=3, s=0$ and $r=c=2$;
  \item[{\rm (8)}] $a=4, c=r=2$ and $s=0$;
  \item[{\rm (9)}] $a\geq2, c=1$ and $\frac{as-1}{a+s}\leq\frac{1}{r-1}$.
\end{wst}
By the definition of twin reduction graph and $G$ without pendant vertices, it is easy to see that $\widetilde{G}$ is switching equivalent to
$$K(q_1,\ldots, q_r; n_1, \ldots, n_k; a^i, 0^{-i}, c^{1}, 0^{-1}, s)$$
with $r\geq2, k\geq2, a\geq c\geq1, s=k-a-c$, satisfying one of the conditions stated in Theorem \ref{thm1.2} ``only if " part (iv).
This completes the proof.
\qed
\section*{\normalsize Acknowledgement}
This work was supported by  NSFC (Grant Nos. 11871479, 12071484), Hunan Provincial Natural Science Foundation (Grant Nos. 2018JJ2479, 2020JJ 4675).

{
}


\begin{thebibliography}{99}
\small \setlength{\itemsep}{-.8mm}
\bibitem{2} J.H. Bevis, K.K. Blount, G.J. Davis, The rank of a graph after vertex addition, Linear Algebra Appl. 265 (1997) 55-69.
\bibitem{1} J.A. Bondy, U.S.R. Murty, Graph Theory, Springer, New York, 2008.
\bibitem{3} S. Chang, A. Chang, Y.R. Zheng, The extremal graphs with respect to their nullity, J. Inequal. Appl. 2016 (2016) 71.
\bibitem{6} C. Chen, J. Huang, S.C. Li, On the relation between the $H$-rank of a mixed graph and the matching number of its underlying graph, Linear Multilinear Algebra 66 (2018) 1853-1869.
\bibitem{8} C. Chen, S.C. Li, M.J. Zhang, Relation between the $H$-rank of a mixed graph and the rank of its underlying graph, Discrete Math. 342 (2019) 1300-1309.
\bibitem{7} B. Cheng, B.L. Liu, On the nullity of graphs, Electron. J. Linear Algebra 16 (2007) 60-67.
\bibitem{5} D. Cvetkovi\'{c}, I. Gutman, The algebraic multiplicity of the number zero in the spectrum of a bipartite graph, Mat. Vesnik (Beograd) 9 (1972) 141-150.
\bibitem{41} Y.Z. Fan, W.X. Du, C.L. Dong, The nullity of bicyclic signed graphs, Linear Multilinear Algebra. 62 (2014) 242-251.
\bibitem{10} Y.Z. Fan, L. Wang, Bounds for the positive and negative inertia index of a graph, Linear Algebra Appl. 522 (2017) 15-27.
\bibitem{42} Y.Z. Fan, Y. Wang, Y. Wang, A note on the nullity of unicyclic signed graphs, Linear Algebra Appl. 438 (2013) 1193-1200.
\bibitem{9} Z.M. Feng, J. Huang, S.C. Li, X.B. Luo, Relationship between the rank and the matching number of a graph, Appl. Math. Comput. 354 (2019) 411-421.
\bibitem{g1} D.A. Gregory, B. Heyink, K.N.V. Meulen, Inertia and biclique decompositions of joins of graphs, J. Combin. Theory Ser. B 88 (2003) 135-151.
\bibitem{guo} K. Guo, B. Mohar, Hermitian adjacency matrix of digraphs and mixed graphs, J. Graph Theory 85 (2016) 217-248.
\bibitem{11} S.C. Gong, Y.Z. Fan, Z.X. Yin, On the nullity of graphs with pendant trees, Linear Algebra Appl. 433 (2010) 1374-1380.
\bibitem{g2} D.A. Gregory, V.L. Watts, B.L. Shader, Biclique decompositions and hermitian rank, Linear Algebra Appl. 292 (1999) 267-280.
\bibitem{geng} X.Y. Geng, Y. Wu, L. Wang, Characterizations of graphs with given inertia index achieving the maximum diameter. Linear Multilinear Algebra. 68 (2020) 1633-1641.
\bibitem{hao} S.J. He, R.X. Hao, A.M. Yu, On the inertia index of a mixed graph with the matching number, ArXiv:1909.07146v1.
\bibitem{hsj} S.J. He, R.X. Hao, F.M. Dong, The rank of a complex unit gain graph in terms of the matching number, Linear Algebra Appl. 589 (2020) 158-185.
\bibitem{horn} R.A. Horn, C.R. Johnson, Matrix Analysis, second ed., Cambridge University Press, Cambridge, 2012.
\bibitem{12} J. Huang, S.C. Li, H. Wang, Relation between the skew-rank of an oriented graph and the independence number of its underlying graph, J. Comb. Optim. 36 (2018) 65-80.
\bibitem{hu} D. Hu, X.L. Li, X.G. Liu, S.G. Zhang, The spectral distribution of random mixed graphs, Linear Algebra Appl. 519 (2017) 343-365.
\bibitem{13} W.J. Luo, J. Huang, S.C. Li, On the relationship between the skew-rank of an oriented graph and the rank of its underlying graph, Linear Algebra Appl. 554 (2018) 205-223.
\bibitem{14} S.C. Li, W.T. Sun, On the relation between the positive inertia index and negative inertia index of weighted graphs, Linear Algebra Appl. 563 (2019) 411-425.
\bibitem{lscc} S.C. Li, W. Wei, The multiplicity of an $A_\alpha$-eigenvalue: a unified approach for mixed graphs and complex unit gain graphs, Discrete Math. 343 (2020) 111916.
\bibitem{ly} S.C. Li, Y.T. Yu, Hermitian adjacency matrix of the second kind for mixed graphs, ArXiv:2102.03760v1.
\bibitem {19} X.L. Li, W. Xia, Skew-rank of an oriented graph and independence number of its underlying graph, J. Comb. Optim. 38 (2019) 268-277.
\bibitem{20} X.L. Li, G.H. Yu, The skew-rank of oriented graphs, Sci. Sin. Math. 45 (2015) 93-104 (in Chinese).
\bibitem{lxl} J.X. Liu, X.L. Li, Hermitian-adjacency matrices and hermitian energies of mixed graphs, Linear Algebra Appl. 466 (2015) 182-207.
\bibitem{luy} Y. Lu, J.W. Wu, Bounds for the rank of a complex unit gain graph in terms of its maximum degree, Linear Algebra Appl. 610 (2021) 73-85.
\bibitem{15} Y. Lu, L.G. Wang, P. Xiao, Complex unit gain bicyclic graphs with rank 2, 3 or 4, Linear Algebra Appl. 523 (2017) 169-186.
\bibitem{16} Y. Lu, L.G. Wang, Q.N. Zhou, The rank of a complex unit gain graph in terms of the rank of its underlying graph, J. Comb. Optim. 38 (2019) 570-588.
\bibitem{17} Y. Lu, L.G. Wang, Q.N. Zhou, Bicyclic oriented graphs with skew-rank 6, Appl. Math. Comput. 270 (2015) 899-908.
\bibitem{18} Y. Lu, L.G. Wang, Q.N. Zhou, Skew-rank of an oriented graph in terms of the rank and dimension of cycle space of its underlying graph, Filomat 32 (2018) 1303-1312.
\bibitem{lu} L. Lu, J.F. Wang, Q.X. Huang, Complex unit gain graphs with exactly one positive eigenvalue, Linear Algebra Appl. 608 (2021) 270-281.
\bibitem{mo} B. Mohar, Hermitian adjacency spectrum and switching equivalance of mixed graphs, Linear Algebra Appl. 489 (2016) 324-340.
\bibitem{mo20} B. Mohar, A new kind of Hermitian matrices for digraphs, Linear Algebra Appl. 584 (2020) 343-352.
\bibitem{26} H. Ma, W. Yang, S. Li, Positive and negative inertia index of a graph, Linear Algebra Appl. 438 (2013) 331-341.
\bibitem{22} X.B. Ma, D. Wong, The nullity of $k$-cyclic graphs of infinity-type, Linear Multilinear Algebra 63 (2015) 2200-2211
\bibitem{23} X.B. Ma, D. Wong, Z. Min, The positive and the negative inertia index of line graphs of trees, Linear Algebra Appl. 439 (2013) 3120-3128.
\bibitem{24} X.B. Ma, D. Wong, F.L. Tian, Skew-rank of an oriented graph in terms of matching number, Linear Algebra Appl. 495 (2016) 242-255.
\bibitem{25} X.B. Ma, D.Y. Wong, F.L. Tian, Nullity of a graph in terms of the dimension of cycle space and the number of pendant vertices, Discrete Appl. Math. 215 (2016) 171-176.
\bibitem{o} M.R. Oboudi, Characterization of graphs with exactly two non-negative eigenvalues, Ars Math. Contemp. 12 (2017) 271-286.
\bibitem{27} N. Reff, Spectral properties of complex unit gain graphs, Linear Algebra Appl. 436 (2012) 3165-3176.
\bibitem{smith} J.H. Smith, Symmetry and multiple eigenvalues of graphs, Glasnik Mat. Ser. III 12 (1) (1977) 3-8.
\bibitem{28} Y. Song, X.Q. Song, B.S. Tam, A characterization of graphs G with nullity $V_G-2m(G)+2c(G)$, Linear Algebra Appl. 465 (2015) 363-375.
\bibitem{29} F.L. Tian, D. Wang, M. Zhu, A characterization of signed planar graphs with rank at most 4, Linear Algebra Appl. 64 (2016) 807-817.
\bibitem{30} S.J. Wang, Relation between the rank of a signed graph and the rank of its underlying graph, Linear Multilinear Algebra. 67 (12) (2019) 2520-2539.

\bibitem{32} L. Wang, D. Wong, Bounds for the matching number, the edge chromatic number and the independence number of a graph in terms of rank, Discrete Appl. Math. 166 (2014) 276-281.
\bibitem{33} X.L. Wang, D. Wong, F.L. Tian, Signed graphs with cut points whose positive inertia indexes are two, Linear Algebra Appl. 539 (2018) 14-27.
\bibitem{34} Y. Wang, B.J. Yuan, S.D. Li, C.J. Wang, Mixed graphs with $H$-rank 3, Linear Algebra Appl. 524 (2017) 22-34.

\bibitem{WW}W. Wei, Z.M. Feng, S.C. Li, Relations between the inertia indices of a mixed graph and those of its underlying graph. Linear Algebra Appl.  588  (2020) 19-53.
\bibitem{wiss} P. Wissing, E.R. van Dam, The negative tetrahedron and the first infinite family of connected digraphs that are strongly determined by the hermitian spectrum, J. Combin. Theory Ser. A 173 (2020) 105232.
\bibitem{31} D. Wong, X.B. Ma, F.L. Tian, Relation between the skew-rank of an oriented graph and the rank of its underlying graph, Eur. J. Comb. 54 (2016) 76-86.
\bibitem{35} D. Wong, M. Zhu, W.P. Lv, A characterization of long graphs of arbitrary rank, Linear Algebra Appl. 438 (2013) 1347-1355.
\bibitem{xu} F. Xu, Q. Zhou, D. Wong, F.L. Tian, Complex unit gain graphs of rank 2. Linear Algebra Appl. 597 (2020) 155-169.
\bibitem{36} G.H. Yu, L.H. Feng, H. Qu, Signed graphs with small positive index of inertia, Electron. J. Linear Algebra 31 (2016) 232-243.
\bibitem{37} G.H. Yu, L.H. Feng, Q.W. Wang, Bicyclic graphs with small positive index of inertia, Linear Algebra Appl. 438 (2013) 2036-2045.
\bibitem{38} G.H. Yu, H. Qu, J.H. Tu, Inertia of complex unit gain graphs, MATCH Commun. Math. Comput. Chem. 265 (2015) 619-629.
\bibitem{radii} B.J. Yuan, Y. Wang, S.C. Gong, Y. Qiao, On mixed graphs whose hermitian spectral radii are at most 2, Graphs and Comb. 36 (2020) 1573-1584.
\bibitem{yuan} B.J. Yuan, Y. Wang, J. Xu, Characterizing the mixed graphs with exactly one positive eigenvalue and its application to mixed graphs determined by their $H$-spectra, Appl. Math. Comput. 380 (2020) 125279.
\bibitem{hxc} S. Zaman, X.C. He, Relation between the inertia indices of a complex unit gain graph and those of its underlying graph, Linear Multilinear Algebra. https://doi.org/10.1080/03081087.2020.1749224.

\bibitem{40} Y.S. Zhang, F. Xu, D. Wong, Characterization of oriented graphs of rank 2, Linear Algebra Appl. 579 (2019) 136-147.
\bibitem{zheng} S.N. Zheng, X. Chen, L. Liu, Y. Wang, Inertia indices and eigenvalue inequalities for hermitian matrices, ArXiv:1910.01966v2.
\bibitem{39} Q. Zhou, D. Wong, D.Q. Sun, An upper bound of the nullity of a graph in terms of order and maximum degree, Linear Algebra Appl. 555 (2018) 314-320.
\end{thebibliography}
\end{document}